\documentclass[10pt]{article}
\usepackage{amssymb,amsthm,bbm}
\usepackage[centertags]{amsmath}
\usepackage{mathtools}
\usepackage[mathscr]{euscript}
\usepackage{tikz-cd}
\usepackage{hyperref}
\usepackage{xspace}
\usepackage{graphicx}
\usepackage{csquotes}
\usepackage{faktor}
\usepackage{tikz}
\usepackage[numbers, sort]{natbib}
\usetikzlibrary{shapes.symbols}

\newcommand{\pbmark}{\ar[dr, phantom, "\ulcorner" very near start, shift right=1ex]}
\newcommand{\pomark}{\ar[ul, phantom, "\ulcorner" very near start, shift right=1ex]}

\newtheorem{thm}{Theorem}[section]
\newtheorem*{theorem*}{Theorem}
\newtheorem{theorem}[thm]{Theorem}

\newtheorem{lemma}[thm]{Lemma}

\newtheorem{proposition}[thm]{Proposition}

\newtheorem{corollary}[thm]{Corollary}
\theoremstyle{definition}
\newtheorem{defn}[thm]{Definition}
\newtheorem{definition}[thm]{Definition}

\newtheorem{example}[thm]{Example}

\newtheorem{remark}[thm]{Remark}

\newcommand\cat[1]{\ensuremath{\mathsf{#1}}\xspace}
\newcommand{\Set}{\cat{Set}}
\newcommand{\Cat}{\cat{Cat}}
\newcommand{\wCat}{\cat{\omega Cat}}
\newcommand{\Comp}{\cat{Comp}}
\newcommand{\Glob}{\cat{Glob}}
\newcommand{\Bat}{\cat{Bat}}

\mathchardef\mhyphen="2D

\makeatletter
\renewcommand{\operator@font}{\sf}
\makeatother

\DeclareMathOperator{\Type}{Sphere}
\DeclareMathOperator{\Cell}{Cell}

\DeclareMathOperator{\Pos}{Pos}

\DeclareMathOperator{\ty}{bdry}
\DeclareMathOperator{\fv}{supp}

\DeclareMathOperator{\fim}{fim}
\DeclareMathOperator{\inc}{inc}
\DeclareMathOperator{\cov}{cov}
\DeclareMathOperator{\br}{br}
\DeclareMathOperator{\var}{var}
\DeclareMathOperator{\coh}{coh}
\DeclareMathOperator{\obpos}{init}
\DeclareMathOperator{\herepos}{init}
\DeclareMathOperator{\inr}{inr}
\DeclareMathOperator{\inl}{inl}
\DeclareMathOperator{\pr}{pr}
\DeclareMathOperator{\src}{src}
\DeclareMathOperator{\tgt}{tgt}
\DeclareMathOperator{\srcpos}{src-pos}
\DeclareMathOperator{\tgtpos}{tgt-pos}
\DeclareMathOperator{\postype}{pos-Sphere}
\DeclareMathOperator{\id}{id}
\DeclareMathOperator{\cdpth}{cell-depth}
\DeclareMathOperator{\mdpth}{mor-depth}
\DeclareMathOperator{\sk}{sk}
\DeclareMathOperator{\cosk}{cosk}
\DeclareMathOperator{\Sk}{Sk}
\DeclareMathOperator{\Cosk}{coSk}
\DeclareMathOperator{\Free}{Free}
\DeclareMathOperator{\comp}{comp}
\DeclareMathOperator{\dom}{dom}
\DeclareMathOperator{\cod}{cod}

\DeclareMathOperator*{\colim}{colim}
\DeclareMathOperator{\op}{{op}}
\DeclareMathOperator{\fc}{{fc}}
\DeclareMathOperator{\el}{el}
\DeclareMathOperator{\self}{self}

\DeclareMathOperator{\gen}{gen}
\DeclareMathOperator{\fr}{fr}

\newcommand{\N}{\ensuremath{\mathbb{N}}}

\makeatletter
\def\calign@preamble{%
   &\hfil\strut@
    \setboxz@h{\@lign$\m@th\displaystyle{##}$}%
    \ifmeasuring@\savefieldlength@\fi
    \set@field
    \hfil
    \tabskip\alignsep@
}
\let\cmeasure@\measure@
\patchcmd\cmeasure@{\divide\@tempcntb\tw@}{}{}{}
\patchcmd\cmeasure@{\divide\@tempcntb\tw@}{}{}{}
\patchcmd\cmeasure@{\ifodd\maxfields@
  \global\advance\maxfields@\@ne
  \fi}{}{}{}
\newenvironment{calign}
{%
  \let\align@preamble\calign@preamble
  \let\measure@\cmeasure@
  \align
}
{%
  \endalign
}
\makeatother

\newcounter{nodemaker}
\setcounter{nodemaker}{0}

\renewcommand{\-}[0]{\nobreakdash-\hspace{0pt}}

\setlength{\marginparwidth}{4.4cm}
\setlength{\marginparsep}{0.1cm}
\newcounter{commcounter}
\setcounter{commcounter}{1}

\begin{document}

\title{\bf Computads for weak \texorpdfstring{$\omega$}{ω}-categories
        \\as an inductive type}
\author{Christopher J. Dean\footnote{Dalhousie University, Canada, \texttt{christopher.dean@dal.ca}}~, Eric Finster\footnote{University of Birmingham, UK, \texttt{E.L.Finster@bham.ac.uk}}~, Ioannis Markakis\footnote{University of Cambridge, UK, \texttt{im496@cam.ac.uk}}~,\\David Reutter\footnote{University of Hamburg, Germany, \texttt{david.reutter@uni-hamburg.de}}~ and Jamie Vicary\footnote{University of Cambridge, UK, \texttt{jamie.vicary@cl.cam.ac.uk}}}
\date{}

\maketitle

\begin{abstract}
        We give a new description of computads for weak globular $\omega$\-categories by giving an explicit inductive definition of the free words. This yields a new understanding of computads, and allows a new definition of $\omega$\-category that avoids the technology of globular operads. Our framework permits direct proofs of important results via structural induction, and we use this to give new proofs that every $\omega$\-category is equivalent to a free one, and that the category of computads with generator-preserving maps is a presheaf topos, giving a direct description of the index category. We prove that our resulting definition of $\omega$\-category agrees with that of Batanin and Leinster and that the induced notion of cofibrant replacement for $\omega$-categories coincides with that of Garner.
\end{abstract}

\section{Introduction}

\paragraph{Motivation.}Given any type of algebraic structure, a basic question is to understand the data from which one can generate it freely. Just as we generate a free monoid from a set, or a free category from a directed graph, we may generate a free $\omega$\-category from a \textit{computad}, in a way which is described by an adjunction between appropriate categories:
\begin{align*}
        \begin{aligned}
                \begin{tikzpicture}
                        \node [anchor=east] (1) at (0,0) {\Set};
                        \node [anchor=west] (2) at (1,0) {\textsf{Mon}};
                        \draw [->] ([yshift=5pt] 1.east) to ([yshift=5pt] 2.west);
                        \draw [<-] ([yshift=-5pt] 1.east) to ([yshift=-5pt] 2.west);
                \end{tikzpicture}
        \end{aligned}
         &  &
        \begin{aligned}
                \begin{tikzpicture}
                        \node [anchor=east] (1) at (0,0) {\textsf{DGraph}};
                        \node [anchor=west] (2) at (1,0) {\textsf{Cat\vphantom{p}}};
                        \draw [->] ([yshift=5pt] 1.east) to ([yshift=5pt] 2.west);
                        \draw [<-] ([yshift=-5pt] 1.east) to ([yshift=-5pt] 2.west);
                \end{tikzpicture}
        \end{aligned}
         &  &
        \begin{aligned}
                \begin{tikzpicture}
                        \node [anchor=east] (1) at (0,0) {$\Comp$};
                        \node [anchor=west] (2) at (1,0) {$\omega \Cat\vphantom{p}$};
                        \draw [->] ([yshift=5pt] 1.east) to node [above] {} ([yshift=5pt] 2.west);
                        \draw [<-] ([yshift=-5pt] 1.east) to node [below] {} ([yshift=-5pt] 2.west);
                \end{tikzpicture}
        \end{aligned}
\end{align*}
Today these computads  play a substantial role in higher category theory and its applications, including in rewriting~\cite{Mimram2014}, word problems and homology~\cite{Burroni1993, makkai_word_2005, Lafont2008}, homotopy theory~\cite{garner_homomorphisms_2010} and topological quantum field theory~\cite{Bartlett2015}.

Initiated by Street~\cite{Street1976} with the study of 2-computads, the theory of $n$\-computads is now well-developed, with key contributions by Batanin~\cite{batanin_computads_1998}, Garner~\cite{garner_homomorphisms_2010} and others. Modern treatments typically proceed as follows~\cite[Section 5]{garner_homomorphisms_2010}: firstly, a category  $\omega\Cat$ of weak $\omega$-categories and strict $\omega$\-functors is defined as the category of algebras for a finitary monad on globular sets; secondly, a notion of $n$\-computad, and free $\omega$\-category over an $n$\-computad, is defined inductively on~$n$, via a colimit construction in $\wCat$. This is guaranteed to be successful thanks to general theorems about cocompleteness properties of categories of algebras of finitary monads~\cite{adamek_1980, adamek_1994}. 


This powerful method quickly establishes that computads are well-defined mathematical objects. However, the resulting definition is not fully explicit: while general theory tell us the necessary colimits must exist, their construction involves an infinite sequence of pushouts~\cite[Section~I.5]{adamek_1980}, which exhibits the set of free words in each dimension as an intricate quotient of a larger set. This technology exhibits the final set via a universal property, but does not give a straightforward description of the resulting quotient set.

In our work we show that this quotient set does in fact admit a direct description in each dimension, as a family of inductive sets. This yields an elementary and fully explicit definition of computads for weak globular $\omega$\-categories.

Since our definition of computad is concrete and elementary, it allows us to demonstrate known results about computads in new and simple ways. We show that our definition reproduces the universal cofibrant replacement of Garner~\cite{garner_homomorphisms_2010}, giving an explicit description of this construction for the case of $\omega$\-categories, and showing in particular that every $\omega$\-category is equivalent to one which is free on a computad. Furthermore, we show that the category of computads with structure-preserving maps is a presheaf topos, and give what we believe is the first explicit description of the index category. We also show that the definition of $\omega$\-category arising from our work precisely agrees with the notion of weak $\omega$\-category described by Batanin and Leinster~\cite{batanin_monoidal_1998,leinster_higher_2003}.

\paragraph{Inductive structure.} Our definition of $n$\-computads is {\textit{inductive}} in each dimension $n$, in the following sense. As the definition proceeds, all the necessary sets are described via {\textit{constructors}}, which allow us to uniquely exhibit any particular element, without requiring that we pass to a quotient. Here we make use of \textit{structural induction}, a powerful technique which enjoys wide use in theoretical computer science, and which generalizes the ordinary mathematical notion of induction over the natural numbers, allowing in cases such as ours more efficient definitions and proofs.
Introductory texts on this topic include Pierce~\cite[Section 3.3]{pierce_textbook}, Winskel~\cite[Chapter 3]{winskel_textbook} and Mitchell~\cite[Chapter 1]{mitchell_textbook}. An equivalent development can be given in terms of polynomial endofunctors~\cite{gambino_2012, moerdijk_2000}, the initial algebras of which correspond precisely to W-types in type theory.

Since the standard approach to computads requires constructing quotient sets, it follows that our definition is different from the standard one obtained via the theory of locally presentable categories, although it describes isomorphic objects. To illustrate this point, consider the situation with free monoids. For a set $X$, we can describe the free monoid on $X$ as having an underlying set given by binary trees with leaves in $X \coprod \{\star\}$, modulo the quotient given by the the associativity and unit laws. This is of course well-defined, but not purely inductive, due to the quotient. Nonetheless, the resulting quotient also admits a direct inductive description, namely, the set of words on $X$, and this inductive structure is convenient for many purposes.

The same question can be asked for many sorts of algebraic structure: does the underlying set of a free instance admit an inductive description? For 2\-categories this is much harder to achieve, perhaps impossible, due to the complex nature of the quotient induced by the interchange law. For $n$-categories for $n>2$ it  is harder still, whether strict or weak, due to the complex set of equational constraints that must be admitted, and there seems no good reason to expect a positive answer.

Our demonstration of this freeness property for $\omega$\-categories may therefore be surprising. Furthermore, since every $\omega$\-category is equivalent to a free one (a phenomenon we discuss below), something even stronger is true: the  entire theory of $\omega$\-categories and functors can be handled in terms of free algebraic structures.

That such a presentation of the theory of computads might exist was suggested by the recent development of a purely syntactic, type-theoretic presentation of the theory of $\omega$\-categories~\cite{finster2017type}.  This theory was, in turn, itself motivated by deep connections between homotopy theory, higher category theory and dependent type theory~\cite{hottbook}.  We feel the present work is a nice example of how ideas from type theory can lead to insight even in classical mathematics.

\paragraph{Validity and formalisation.} Of course, it must be clear that our definition has an unambiguous interpretation in terms of sets and functions, which we call its \textit{semantics}, and is free from pathologies such as non-wellfoundedness. Standard techniques exist to establish wellfoundedness of inductive definitions, but they cannot be applied directly to our definition, which has a complex structure; for example, it uses large induction-recursion in the parameters. We have formalised our definition in the proof assistant Agda~\cite{markakis_computads_2023}, which accepts it as correct. However, this is not in itself a proof of validity, as is known that Agda will accept some definitions whose semantics is not fully understood~\cite{stackexchange-meven}. We tackle this problem by constructing a suitable polynomial endofunctor, which we show has an initial algebra that yields a semantics for our definition. 

\paragraph{Computad-first.} Our approach yields a category of computads $\Comp$ as the primary object of study, which we construct directly, without passing via a pre-existing definition of $\omega$\-category. We then exhibit an adjunction between $\Comp$ and the category $\Glob$ of globular sets, yielding a monad on $\Glob$, and we define $\wCat$ as the category of algebras for this monad. This gives a new definition of $\wCat$ that avoids the standard technology of globular operads and contractions. The following diagram illustrates our approach:
\[\begin{tikzcd}
        \Comp && \wCat \\
        & \Glob
        \arrow["{K^w}", from=1-1, to=1-3]
        \arrow[""{name=0, anchor=center, inner sep=0}, "\Free", shift left=2, from=2-2, to=1-1]
        \arrow[""{name=1, anchor=center, inner sep=0}, "\Cell", shift left=2, from=1-1, to=2-2]
        \arrow[""{name=2, anchor=center, inner sep=0}, "{F^w}"', shift right=2, from=2-2, to=1-3]
        \arrow[""{name=3, anchor=center, inner sep=0}, "{U^w}"', shift right=2, from=1-3, to=2-2]
        \arrow["\dashv"{anchor=center, rotate=50}, draw=none, from=0, to=1]
        \arrow["\dashv"{anchor=center, rotate=131}, draw=none, from=2, to=3]
\end{tikzcd}\]
This contrasts with other approaches to the theory of computads as discussed above, which begin with a definition of \wCat, and use it as a building block in the definition of computad. Our work therefore presents a shift in perspective, which generates many of the benefits of our approach. Indeed, we propose that computads, as syntactic objects, are fundamentally simpler structures than $\omega$\-categories, and that our approach brings out this simplicity.

\paragraph{Computadic replacement.} In a locally finitely presentable category, given a set of morphisms $I$, usually referred to as  \emph{generating cofibrations}, we can apply Quillen's small object argument to obtain a weak factorisation system $(\mathcal L, \mathcal R)$, where $\mathcal R := I^\pitchfork$ and $\mathcal L := {}^\pitchfork \mathcal R$.\footnote{For a set of morphisms $S$, we write $S^\pitchfork$ and ${}^\pitchfork \! S$ for the sets of morphisms with the right  and left lifting property with respect to $S$, respectively.} We say an object $X$ is \emph{cofibrant} when the initial morphism $0 \to X$ is in $\mathcal L$, and furthermore that $X$ is a \emph{cofibrant replacement} for $Y$ when there exists an $\mathcal R$-morphism $X \to Y$. It was shown by Garner~\cite[Section 2]{garner_homomorphisms_2010} that  one can then obtain a \textit{universal cofibrant replacement comonad} $(Q,\Delta,r)$. This monad generates the cofibrant replacement structure, in the following sense: for any object $Y$,  the object $QY$ is cofibrant, and the counit $r_Y :\ QY \to Y$ is the necessary $\mathcal R$-map exhibiting $QY$ as a cofibrant replacement for $Y$. He shows that this comonad is universally determined by a certain lifting property.

For the globular $\omega$\-categories that we study here, we take our generating cofibrations $I$ to be  the inclusions $\iota_n:\mathbb S^{n-1} \to \mathbb D ^n$ of spheres as the boundaries of disks. Garner's lifting condition then takes the following form:
\[
        \label{garner_lifting}
        \begin{tikzcd}
                {\mathbb{S}^{n-1}} & QX \\
                {\mathbb{D}^n} & X
                \arrow[from=2-1, to=2-2]
                \arrow[from=1-1, to=1-2]
                \arrow["{r_X}", from=1-2, to=2-2]
                \arrow["{\iota_n}"', from=1-1, to=2-1]
                \arrow[dotted, from=2-1, to=1-2]
        \end{tikzcd}
\]
We give a direct, elementary proof of this lifting property by structural induction on our definition of computad. It follows that our notion of computad generates the universal cofibrant replacement of Garner.

To justify our choice of generating cofibrations, let us recall that a basic recursive definition of categorical equivalence may begin as as follows~\cite[Section~3]{baez_cohomology}: a 0\-functor is a 0\-equivalence when it is a bijection of sets, and an $(n+1)$\-functor is an $(n+1)$\-equivalence when it is essentially surjective on objects, and an $n$\-equivalence on hom\-$n$\-categories. This definition is not complete, since the ``essentially surjective'' property requires an understanding of which 1\-morphisms are  ``invertible''. However, \textit{identity} 1\-morphisms should certainly count as invertible, and so this leads us to the strictly stronger notion of \emph{local equivalence}: a 0\-functor is a local 0\-equivalence when it is a bijection, and an $(n+1)$\-functor is a local $(n+1)$\-equivalence when it is surjective on objects, and a local $n$\-equivalence on hom\-$n$\-categories. A striking property of this definition is that we check surjectivity at every level, but injectivity is only relevant at level $n$. For the limiting case of $n=\omega$, a local $\omega$\-equivalence then simply involves verifying a surjection at every level, and so we may also call it a \emph{local $\omega$\-surjection}.\footnote{Note that an injectivity condition is never needed. A similar phenomenon appears in Homotopy Type Theory, where the property of a map $f$ being an embedding can be expressed by requiring that the induced map $\operatorname{ap} f$ on paths admit a section.} It is now a simple matter to check that the maps which have the right lifting property with respect to the inclusions $\iota_n:\mathbb S^{n-1} \to \mathbb D ^n$ are exactly the local $\omega$-surjections.  Since we prove the lifting property, it follows that every $\omega$\-category $X$ is indeed equivalent to $QX$, the free $\omega$\-category on the underlying $\omega$\-computad of $X$.

\paragraph{Presheaf structure.} The study of presheaf structure on computads has a rich history. A 1-computad is a directed graph, and so the category of 1-computads is certainly a presheaf category. The category of 2-computads is also a presheaf category, as shown by Carboni and Johnstone~\cite{carboni_connected_1995}, and given a modern presentation by Leinster~\cite[Section 7.6]{leinster_higher_2003}; this holds in both the strict and weak forms. However, strict 3-computads do not form a presheaf category, as first shown by Makkai and Zawadowski~\cite{makkai_category_2008}, and further elucidated by Cheng~\cite{cheng_direct_2012} and Leinster~\cite[Section~7.6]{leinster_higher_2003}. On the other hand, weak $n$\-computads do form a presheaf category, as shown by Batanin~\cite{batanin_computads_2002}, in the general setting of algebras for globular operads satisfying a normalization condition.

We use our definition to give an explicit proof of the presheaf property for the category of weak $n$\-computads, including a direct construction of the index category. We believe this is the first time the index category has been explicitly presented. Roughly speaking, a representable has a unique top-level  ``dominating generator with generic type''. Each cell in the terminal computad induces a canonical diagram of representables in lower levels. The dominating generators appearing in this diagram induce a canonical generic type in its colimit.

In a different direction, Henry~\cite{henry_non-unital_2018} identifies subcategories of strict $n$\-computads that are presheaf categories. He then gives for such subcategories a bijection between generators of the terminal computad and \emph{plexes}, the objects of the index category. These plexes are strict computads satisfying a number of properties~\cite[Proposition 2.2.3]{henry_non-unital_2018}, and are analogous to the representable computads that we study.

\paragraph{Comparison with other definitions.} The work of Grothendieck, Maltsiniotis, Batanin and Leinster has led to a notion of weak $\omega$\-category, as an algebra for the initial contractible globular operad. By work of Berger \cite{berger_cellular_2002}, this operad can be seen as a \emph{globular theory}; that is a category whose objects are Batanin trees (globular pasting diagrams) where morphisms admit a certain factorisation. We give an explicit description of these factorisations for morphisms out of a computad generated by a Batanin tree. In this way we obtain a homogeneous globular theory. Furthermore, we show that this globular theory satisfies the universal property of the globular theory corresponding to the initial contractible globular operad. It follows that the weak $\omega$-categories corresponding to our computads are exactly Batanin-Leinster weak $\omega$-categories.

\paragraph{Acknowledgements.}
The work of CD on this project was partially supported by an LMS Early Career Fellowship funded by the London Mathematical Society with support from the Heilbronn Institute for Mathematical Research and UKRI. IM is supported by the Onassis Foundation – Scholarship ID: F~ZQ~039-1/2020-2021. DR is funded by the Deutsche Forschungsgemeinschaft (DFG, German Research Foundation) – 493608176 and is grateful for the hospitality and financial support of the Max-Planck Institute for Mathematics in Bonn where some of this work was carried out. JV gratefully acknowledges funding from the Royal Society.

\section{Batanin trees}\label{sec:Batanin}

Our definition of $\omega$-categorical computad will be based on a distinguished collection of such objects parameterized
by rooted planar trees.  We will adopt the name \emph{Batanin trees}
here to emphasize their interpretation as parameterizing globular pasting
diagrams. As we will be working extensively with inductively
generated sets in what follows, we give an inductive characterization
of such trees as well as their \emph{positions}.

\begin{defn}
        The set $\Bat$ of Batanin trees is defined inductively by
        \begin{itemize}
                \item For every list $[B_1,\dots,B_n]$ of Batanin trees,
                      a tree $\br \, [B_1,\dots,B_n]$.
        \end{itemize}
\end{defn}

\noindent Equivalently, the set of Batanin trees is an initial algebra for the polynomial endofunctor $F : \Set \to \Set$ taking a set to the set
\[
        F(X) = \coprod_{n\in \N} X^n
\]
of lists with elements from it. It can be computed as the increasing union of the sequence of sets defined by
\begin{align*}
        \Bat_{0}   & = \{\br\,[ \, ]\}                                         \\
        \Bat_{k+1} & = \{ \br\,[B_1,\dots,B_n]\, : n\in \N,\, B_i\in \Bat_k \}
\end{align*}
From the description of the list endofunctor $F$ as a coproduct of powers, it follows that the set of Batanin trees is the initial set with an $n$\-ary operation for every $n\in \N$. Moreover, Batanin trees are the cells of free strict $\omega$\-category on the terminal globular set, as explained by Leinster~\cite{leinster_higher_2003}.

Using the constructor $\br$ repeatedly, we may construct a variety of trees. Any such tree can be represented graphically as a rooted, planar tree, where $\br[B_1,\dots, B_n]$ has a new root with branches given by the trees $B_i$. Some examples can be given as follows:
\begin{calign}
\nonumber
\begin{tikzcd}
  \phantom{\bullet} \\
  \phantom{\bullet} \\
  \bullet
\end{tikzcd}
&
\begin{tikzcd}
  \phantom{\bullet} \\
  \bullet \\
  \bullet
  \arrow[from =3-1,to =2-1, no head]
\end{tikzcd}
&
\begin{tikzcd}[ampersand replacement=\&]
  \&\&\bullet \\
  \bullet \& \bullet \& \bullet \\
  \& \bullet
  \arrow[from=3-2, to=2-1, no head]
  \arrow[from=3-2, to=2-2, no head]
  \arrow[from=3-2, to=2-3, no head]
  \arrow[from=2-3, to=1-3, no head]
\end{tikzcd}
\\ \nonumber
\br[] & \br[\br[]] & \br[\br[],\br[],\br[\br[]]]
\end{calign}

\noindent We define the \emph{dimension} of a Batanin tree $B$ to be the least $k\in \N$ such that $B\in \Bat_k$. The dimension function $\dim : \Bat \to \N$ is defined recursively by
\begin{align*}
        \dim \, (\br \, [ \, ])            & = 0                                                         \\
        \dim \, (\br \, [B,B_1,\dots,B_n]) & = \max \, (\dim \, B + 1, \dim \, (\br \, [B_1,\dots,B_n]))
\end{align*}
As can be easily seen by induction, the dimension of a tree coincides with the length of its longest path to the root, also known as the \textit{height} of the tree.

Recall that a globular set $X$ consists of a set $X_n$ of \emph{$n$-cell}s for every natural number $n$, together with source and target maps
\[\src, \tgt : X_n \to X_{n-1}\]
satisfying the \emph{globularity conditions}
\begin{align*}
        \src \, \circ \, \src & = \src \, \circ \, \tgt, &
        \tgt \, \circ \, \src & = \tgt \, \circ \, \tgt.
\end{align*}
They form a category $\Glob$ where morphisms $f : X\to Y$ are sequences of functions $f_n : X_n\to Y_n$ commuting with the source and target maps. Equivalently, $\Glob$ is the category of set-valued presheaves on the category $\mathbb{G}$ with objects natural numbers, written $[n]$, and morphisms generated by the primitive source and target inclusions
\[s, t : [n] \to [n+1] \]
for all $n$, under the duals of the globularity conditions.

Batanin trees parameterise a family of globular sets, known as \emph{pasting diagrams}, which play an important role in the theory of $\omega$\-categories; see Leinster for a detailed description of pasting diagrams and their relation to Batanin trees~\cite[Section~8.1]{leinster_higher_2003}. We will call the
pasting diagram parameterised by a tree, its \emph{globular set of positions}
$\Pos(B)$. It is formally defined as follows:

\begin{defn}\label{def:positions}
        For a Batanin tree $B$, we define the set $\Pos_n(B)$ of
        \emph{positions of $B$ of dimension $n$} inductively as follows:
        \begin{itemize}
                \item For every Batanin tree $B$, a position $\herepos(B)\in \Pos_0(B)$.
                \item For every $n \in \mathbb{N}$ and every
                      triple $(B,L,p)$ where
                      \begin{itemize}
                              \item $B$ is a Batanin tree,
                              \item $L = [B_1,\dots,B_n]$ is a list of Batanin trees,
                              \item $p \in \Pos_n(\br \, L)$ is an $n$-position of $\br \, L$,
                      \end{itemize}
                      a position $\inr \, (B,L,p) \in \Pos_{n}(\br \, [B,B_1,\dots,B_n])$.
                \item For every $n \in \mathbb{N}$ and every
                      triple $(B,L,q)$ where
                      \begin{itemize}
                              \item $B$ is a Batanin tree,
                              \item $L = [B_1,\dots,B_n]$ is a list of Batanin trees,
                              \item $q \in \Pos_n(B)$ is an $n$-position of $B$,
                      \end{itemize}
                      a position $\inl \, (B,L,q) \in \Pos_{n+1}(\br \, [B,B_1,\dots,B_n])$.
        \end{itemize}
\end{defn}

\noindent Every positive dimensional position may be assigned a source and a target, representing the globular structure of the pasting diagram represented by a Batanin tree. For a given Batanin tree $B$, the source and target maps
\begin{align*}
        \srcpos & : \Pos_{n+1}(B) \to \Pos_n(B) \\
        \tgtpos & : \Pos_{n+1}(B) \to \Pos_n(B)
\end{align*}
are defined inductively by the following equations:
\begin{align*}
        \srcpos \, (\inr \, (B,L,q)) & = \inr \, (\srcpos \, q) \\
        \srcpos \, (\inl \, (B,L,p)) & =
        \begin{cases}
                \herepos               & \text{if }n=0,     \\
                \inl \, (\srcpos \, p) & \text{if }n \ge 1,
        \end{cases}             \\
        \tgtpos \, (\inr \, (B,L,q)) & = \inr \, (\tgtpos \, q) \\
        \tgtpos \, (\inl \, (B,L,p)) & =
        \begin{cases}
                \inr \, (\herepos)  & \text{if }n=0      \\
                \inl \, (\tgtpos p) & \text{if } n \ge 1 \\
        \end{cases}
\end{align*}
It is not difficult to show by induction that the globularity conditions hold,
so the positions of $B$ form a globular set $\Pos\,(B)$.

\begin{example}\label{example:globes-trees}
        The \emph{globes} are the Batanin trees defined inductively by
        \begin{align*}
                D_0 & = \br\, [ \, ], & D_{n+1} & = \br\, [D_n].
        \end{align*}
        It is easy to see by induction that the globular set of positions of $D_n$ is isomorphic to the representable globular set $\mathbb{G}(-,[n])$.

\end{example}

\begin{example}\label{example:one-dimensional-trees}
        Batanin trees of dimension at most one are sequences of the form
        $B_n = \br\,[D_0,\overset{n}{\dots},D_0]$ for some natural number
        $n\in \N$. The globular set of positions of $B_n$ is a sequence of $n$
        composable arrows.
        \[
          \begin{tikzcd}[column sep = 5]
            \bullet & \bullet & \overset{n}{\cdots} & \bullet & \bullet \\
            && \bullet
            \arrow[no head, from = 2-3, to = 1-1]
            \arrow[no head, from = 2-3, to = 1-2]
            \arrow[no head, from = 2-3, to = 1-4]
            \arrow[no head, from = 2-3, to = 1-5]
          \end{tikzcd}\qquad
          \begin{tikzcd}[column sep = large]
            \bullet & \bullet & \cdots & \bullet & \bullet
            \arrow["{\inl(\herepos)}", from=1-1, to=1-2]
            \arrow[from=1-2, to=1-3]
            \arrow[from=1-3, to=1-4]
            \arrow["{\inr^{n-1}\inl(\herepos)}", from=1-4, to=1-5]
        \end{tikzcd}\]
\end{example}

\begin{example}\label{ex:tree-with-positions}
        Consider the tree $B = \br\,[ \br\,[D_0, D_0], \, D_0]$. It has $9$
        positions, which we can draw on the tree as follows:
        \vspace{-15pt}
        \[\begin{tikzcd}[column sep = large]
          {\phantom{}} && {\phantom{}} \\
          \bullet & {\phantom{}} & \bullet & {\phantom{}} \\
          & \bullet & {\phantom{}} & \bullet \\
          && \bullet
          \arrow["{\inr^2(\herepos)}"', no head, from=4-3, to=3-4]
          \arrow["{\herepos}", no head, from=4-3, to=3-2]
          \arrow["{\inr(\herepos)}"{description}, draw=none, from=4-3, to=3-3]
          \arrow["{\inr(\inl(\herepos))}"{description, near start}, draw=none, from=3-4, to=2-4]
          \arrow["{\inl(\inr^2(\herepos))}"', no head, from=3-2, to=2-3]
          \arrow["{\inl(\herepos)}",  no head, from=3-2, to=2-1]
          \arrow["{\inl(\inr(\herepos))}"{description}, draw=none, from=3-2, to=2-2, yshift=4pt, inner sep=0pt]
          \arrow["{\inl^2(\herepos)}"{description, near start}, draw=none, from=2-1, to=1-1]
          \arrow["{\inl(\inr(\inl(\herepos)))}"{description, near start}, draw=none, from=2-3, to=1-3]
        \end{tikzcd}\]
        Here the positions of the form $\inl\ p$ are on the first branch of the tree, while the positions of the form $\inr\,q$ are on the remaining branches. The position $\herepos(B)$ is the one connecting the root to the first branch. The source and target of each position are given by the positions adjacent to the edge below it, so that in particular the globular set $\Pos\,(B)$ is the one pictured below.
        \begin{calign}
                \nonumber
                \begin{aligned}
                \begin{tikzpicture}
                \path [use as bounding box] (-1.5,-1.5) rectangle (1.5,1);
                \node at (0,0) {$\begin{tikzcd}[column sep = small, ampersand replacement=\&]
                  {\phantom{}} \&\& {\phantom{}} \\
                  \bullet \& {\phantom{}} \& \bullet \& {\phantom{}} \\
                  \& \bullet \& {\phantom{}} \& \bullet \\
                  \&\& \bullet
                  \arrow["{\tau_9}"', no head, from=4-3, to=3-4]
                  \arrow["{\tau_1}", no head, from=4-3, to=3-2]
                  \arrow["{\tau_7}"{description}, draw=none, from=4-3, to=3-3]
                  \arrow["{\tau_8}"{description, near start}, draw=none, from=3-4, to=2-4]
                  \arrow["{\tau_6}"', no head, from=3-2, to=2-3]
                  \arrow["{\tau_2}",  no head, from=3-2, to=2-1]
                  \arrow["{\tau_4}"{description}, draw=none, from=3-2, to=2-2]
                  \arrow["{\tau_3}"{description, near start}, draw=none, from=2-1, to=1-1]
                  \arrow["{\tau_5}"{description, near start}, draw=none, from=2-3, to=1-3]
                \end{tikzcd}$};
                \end{tikzpicture}
                \end{aligned}
                &
                \begin{aligned}
                \begin{tikzpicture}
                \path [use as bounding box] (-2.5,-1) rectangle (2.5,1);
                \node at (0,0) {$\begin{tikzcd}[ampersand replacement=\&]
                  {} \\
                  \bullet \&\& \bullet \&\& \bullet \\
                  {\phantom{}} \&\& {\phantom{}} \&\& {\phantom{}}
                  \arrow["{\tau_1}"{description, near start}, draw=none, from=2-1, to=3-1]
                  \arrow["{\tau_7}"{near start}, draw=none, from=2-3, to=3-3]
                  \arrow["{\tau_9}"{description, near start}, draw=none, from=2-5, to=3-5]
                  \arrow[""{name=0, anchor=center, inner sep=0}, "{\tau_2}", bend left = 60, from=2-1, to=2-3]
                  \arrow[""{name=1, anchor=center, inner sep=0}, "{\tau_6}"', bend right = 60, from=2-1, to=2-3]
                  \arrow[""{name=2, anchor=center, inner sep=0}, "{\tau_4}"{description}, from=2-1, to=2-3]
                  \arrow["{\tau_8}", from=2-3, to=2-5]
                  \arrow["{\tau_5}"', shorten <=2pt, shorten >=2pt, Rightarrow, from=2, to=1]
                  \arrow["{\tau_3}"', shorten <=2pt, shorten >=2pt, Rightarrow, from=0, to=2]
                \end{tikzcd}$};
                \end{tikzpicture}
                \end{aligned}
                \end{calign}
        The process of picturing cells of a pasting diagram as sectors of the corresponding tree can be found in detail in the work of Berger~\cite[Example~1.4]{berger_cellular_2002}.
\end{example}

\begin{remark}\label{rem:trees-are-sums}
        Batanin trees are in bijection with the zigzag sequences described by Weber \cite[Section~4]{weber_pra_2004}, hence also to the trees defined by Batanin \cite{batanin_monoidal_1998}. The sequence corresponding to $\br\,[B_1,\dots,B_n]$ is obtained from the sequences corresponding to the trees $B_i$ by raising their entries by $1$ and concatenating them with a $0$ in between. One can easily show by induction that the globular set of positions of a tree $B$ realizes the globular sum of the corresponding sequence.
\end{remark}

The \emph{boundary} $\partial_k B$ of a Batanin tree $B$ at some $k\in \N$ is the tree obtained by removing all nodes of height more than $k$. The positions of the boundary may be included in those of $B$ in two ways, identifying them with the positions in the source or target of the pasting diagram $\Pos\,(B)$. We now proceed to define the boundaries and the \emph{source} and \emph{target inclusions} more formally.


\begin{definition}
        For $k\in \N$, the \emph{$k$-boundary} of a Batanin tree $B$, is the Batanin tree $\partial_k B$ defined inductively by
        \begin{align*}
                \partial_0 \br\,[B_1, \dots, B_n]     & =\br\,[ \, ]                                 \\
                \partial_{k+1} \br\,[B_1, \dots, B_n] & = \br\,[\partial_k B_1,\dots,\partial_k B_n]
        \end{align*}
\end{definition}

\noindent We define the source and target inclusions
\[s_k^B, t_k^B : \Pos\,(\partial_k B) \to \Pos\,(B)\]
recursively by
\[ s_k^{\br\,[ \, ]} = t_k^{\br\,[ \, ]} = \id,\]
and for every tree $B$ and list of trees $L = [B_1,\dots,B_n]$,
\begin{align*}
        s_0^{\br\,[B,B_1,\dots,B_n]}(\herepos)     & = \herepos,                    & t_0^{\br\,[B,B_1,\dots,B_n]}(\herepos)     & = \inr\,(t_0^{\br\,L}(\herepos)), \\
        s_{k+1}^{\br\,[B,B_1,\dots,B_n]}(\herepos) & = \herepos,                    & t_{k+1}^{\br\,[B,B_1,\dots,B_n]}(\herepos) & = \herepos,                       \\
        s_{k+1}^{\br\,[B,B_1,\dots,B_n]}(\inl\,p)  & = \inl\,(s_{k}^{B}(p)),        & t_{k+1}^{\br\,[B,B_1,\dots,B_n]}(\inl\,p)  & = \inl\,(t_{k}^{B}(p)),           \\
        s_{k+1}^{\br\,[B,B_1,\dots,B_n]}(\inr\,p)  & = \inr\,(s_{k+1}^{\br\,L}(p)), & t_{k+1}^{\br\,[B,B_1,\dots,B_n]}(\inr\,p)  & = \inr\,(t_{k+1}^{\br\,L}(p)).
\end{align*}
We illustrate these inclusions as follows, using again the same tree from Example~\ref{ex:tree-with-positions}. The image of each variable under the inclusions is indicated by the choice of variable name.
\[\begin{tikzcd}[row sep=small]
        \bullet && \bullet && \bullet && \Pos(\partial_1 B)\\
        {\phantom{}} && {\phantom{}} && {\phantom{}} \\
        \bullet && \bullet && \bullet && \Pos(B)\\
        {\phantom{}} && {\phantom{}} && {\phantom{}} \\
        \bullet && \bullet && \bullet && \Pos(\partial_1 B)
        \arrow["{s_1^B}", from = 1-7, to = 3-7]
        \arrow["{t_1^B}"', from = 5-7, to = 3-7]
        \arrow["{\tau_1}"{description, near start}, draw=none, from=3-1, to=4-1]
        \arrow["{\tau_7}"{near start}, draw=none, from=3-3, to=4-3]
        \arrow["{\tau_9}"{description, near start}, draw=none, from=3-5, to=4-5]
        \arrow[""{name=0, anchor=center, inner sep=0}, "{\tau_2}", bend left = 60, from=3-1, to=3-3]
        \arrow[""{name=1, anchor=center, inner sep=0}, "{\tau_6}"', bend right = 60, from=3-1, to=3-3]
        \arrow[""{name=2, anchor=center, inner sep=0}, "{\tau_4}"{description}, from=3-1, to=3-3]
        \arrow["{\tau_8}", from=3-3, to=3-5]
        \arrow["{\tau_5}"', shorten <=2pt, shorten >=2pt, Rightarrow, from=2, to=1]
        \arrow["{\tau_3}"', shorten <=2pt, shorten >=2pt, Rightarrow, from=0, to=2]
        \arrow["{\tau_1}"{description, near start}, draw=none, from=1-1, to=2-1]
        \arrow["{\tau_7}"{near start, description}, draw=none, from=1-3, to=2-3]
        \arrow["{\tau_9}"{description, near start}, draw=none, from=1-5, to=2-5]
        \arrow["{\tau_8}", from=1-3, to=1-5]
        \arrow["{\tau_2}", from=1-1, to=1-3]
        \arrow["{\tau_1}"{description, near start}, draw=none, from=5-1, to=4-1]
        \arrow["{\tau_7}"{near start, description}, draw=none, from=5-3, to=4-3]
        \arrow["{\tau_9}"{description, near start}, draw=none, from=5-5, to=4-5]
        \arrow["{\tau_8}", from=5-3, to=5-5]
        \arrow["{\tau_6}", from=5-1, to=5-3]
      \end{tikzcd}\]

\noindent It is easy to show by induction that $s_k^B$ and $t_k^B$ are morphisms of globular sets and that they coincide with the cosource and cotarget inclusions described by Weber~\cite[Section~4]{weber_pra_2004}. In particular, the following properties hold, which may also be verified directly by induction.


\begin{proposition}\label{prop:src-tgt-equations}
        For every Batanin tree $B$ and $k, l \in \N$,
        \begin{enumerate}
                \item $\dim\, (\partial_k B) = \min\,(k, \dim\, B)$
                \item If $\dim\, B \le k$, then $\partial_k B = B$ and $s_k^B = t_k^B = \id$.

                \item If $l < k$, then $\partial_l \partial_k B = \partial_lB$ and
                      \begin{align*}
                              s_k^B \circ s_l^{\partial_k B} & =
                              t_k^B \circ s_l^{\partial_k B} = s_l^B \\
                              s_k^B \circ t_l^{\partial_k B} & =
                              t_k^B \circ t_l^{\partial_k B} = t_l^B
                      \end{align*}
                \item The morphisms $s_k^B$ and $t_k^B$ are equal and bijective on positions of dimension less than $k$. Moreover, they are injective on positions of dimension~$k$.
        \end{enumerate}
\end{proposition}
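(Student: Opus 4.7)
My plan is to prove all four parts by structural induction on the Batanin tree $B$, carried out in the listed order so that later parts may cite earlier ones. In every case the base $B = \br\,[\,]$ is immediate: the boundary $\partial_k \br\,[\,]$ equals $\br\,[\,]$ for every $k$, and both inclusions are set to the identity, so all four statements hold trivially.

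For the inductive step on $B = \br\,[B_1,\ldots,B_n]$ with $n \ge 1$, Part~1 is a direct computation. When $k = 0$, the boundary is $\br\,[\,]$ of dimension $0 = \min(0, \dim B)$; when $k \ge 1$, the definitions together with the inductive hypothesis give
\[
\dim(\partial_k B) = \max_i(\min(k-1, \dim B_i) + 1) = \min(k, \max_i(\dim B_i + 1)) = \min(k, \dim B).
\]
Part~2 then follows from Part~1 combined with a secondary induction. Assuming $\dim B \le k$, each $B_i$ has dimension at most $k-1$, so the inductive hypothesis yields $\partial_{k-1} B_i = B_i$ and $s_{k-1}^{B_i} = t_{k-1}^{B_i} = \id$, whence $\partial_k B = B$. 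The identity claim for the top-level inclusions is then a case analysis on positions: each recursive clause either returns a position of the same shape ($\herepos$, $\inl\,p$, or $\inr\,p$) or reduces to an inductive instance that is already the identity. The only subtle case is the asymmetric clause $t_0^B(\herepos) = \inr(t_0^{\br\,L}(\herepos))$, which one must check agrees with the identity when $\dim B = 0$; in that situation the hypothesis forces all subtrees to be $\br\,[\,]$, so the $\herepos$ clause of the base case applies.

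For Part~3, the equation $\partial_l \partial_k B = \partial_l B$ when $l < k$ reduces to iterating the boundary computation. The four compositional identities for $s$ and $t$ are proved by a nested structural induction, reading off each defining clause of $s_k^B$ and $t_k^B$ and invoking either the inductive hypothesis on a smaller tree or Part~1 to confirm that the correct lower boundary appears. The case $l = 0$ is dispatched directly from the definitions of $s_0^B$ and $t_0^B$, with a separate check that the $\inr$ recursion on the tail of the list interacts correctly with the $\herepos$ clause.

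Part~4 is the most delicate. I would prove the three claims (equality, bijectivity below dimension $k$, and injectivity at dimension $k$) simultaneously by structural induction on $B$. For positions of dimension strictly less than $k$, the clauses defining $s_k^B$ and $t_k^B$ all refer to subtree inclusions at level $k - 1$; inductively these coincide and are bijective, and Parts~1 and~2 identify the low-dimensional positions of $\partial_k B$ with those of $B$ so that the composite map is the identity under this identification. Injectivity at dimension exactly $k$ is proved by observing that each clause preserves injectivity of the inductive restrictions to subtrees, with the $\herepos$ case contributing at most one top-dimensional image per branch. I expect the main obstacle to lie precisely here: keeping track of the interplay between a position's dimension, the tree structure, and the three-way case split between $\herepos$, $\inl$, and $\inr$ in the definitions of the inclusions is bookkeeping-heavy, though conceptually mechanical --- exactly the kind of place where the authors' Agda formalisation presumably pays off.
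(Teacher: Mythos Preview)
Your proposal is correct and matches the paper's approach: the paper omits a detailed proof, stating only that these properties ``may also be verified directly by induction'' (or alternatively follow from identifying $s_k^B$ and $t_k^B$ with Weber's cosource and cotarget inclusions), and your structural induction is exactly that direct verification. One small simplification for Part~2: in the inductive step on $B = \br[B_1,\ldots,B_n]$ with $n \ge 1$ you automatically have $\dim B \ge 1$, so the hypothesis $\dim B \le k$ forces $k \ge 1$ and the asymmetric $t_0$ clause is never invoked outside the base case --- there is no tree with at least one branch and dimension~$0$.
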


\noindent The \emph{strict $\omega$-category monad} $(\fc^s,\eta^s,\mu^s)$ on the category of globular sets is defined in terms of Batanin trees. The endofunctor $\fc^s$ is the familially represented endofunctor sending a globular set $X$ and $n\in \N$ to
\[ \fc^s_nX = \coprod_{\dim\,B \le n} \Glob(\Pos\,(B), X).\]
The source and target maps
\[ \src,\tgt : \fc^s_{n+1} X \to \fc^s_{n} X \]
are given by precomposition with the source and target inclusions $s_n^B$ and ${t_n^B : \Pos(\partial_k B) \to \Pos(B)}$ respectively. The unit $\eta^s : \id \Rightarrow \fc^s$ sends an $n$-cell to the morphism out of $\Pos\,(D_n) \cong \mathbb{G}(-,[n])$ to which it corresponds under the Yoneda embedding. The monad multiplication uses that certain colimits of pasting diagrams are again pasting diagrams~\cite[Proposition~4.7]{weber_pra_2004}.

Before finishing this section, we introduce two families of positions of a Batanin tree, the source and target boundary positions. These are the positions which are not the target, or respectively the source, or another position. To illustrate this, recall the tree $B$ from Example~\ref{ex:tree-with-positions}. In this example, the only source 0-position is $\tau_1$, the source 1-positions are $\tau_2$ and $\tau_8$, and the source 2\-positions are $\tau_3$ and $\tau_5$. Similarly, the only target 0-position is $\tau_9$, the target 1-positions are $\tau_6$ and $\tau_8$, and the target 2-positions are again $\tau_3$ and $\tau_5$.

\begin{definition}
        Let $B$ be a Batanin tree and $p \in \Pos_n(B)$ a position of dimension $n$.  We say that $p$ is a \emph{source boundary position} if
        there does not exist any $q \in \Pos_{n+1}(B)$ such that $\tgtpos \, q = p$.  Conversely, we say that $p$ is a \emph{target boundary position} of $B$ if there does not exist any $q \in \Pos_{n+1}(B)$ such that $\srcpos \, q = p$.  We write $\partial^s_n(B)$ and $\partial^t_n(B)$ for the set of $n$-dimensional source and target positions respectively.
\end{definition}

It is easy to give an inductive characterization of the source and target boundary positions, similar to the one for all positions. The unique position of $\br\,[\,]$ is both source and target boundary, while for every Batanin tree $B$ and list $L = \,[B_1,\dots, B_n]$ of trees
\begin{itemize}
        \item $\herepos(B, L)$ is source boundary.
        \item if $p\in \Pos_n(B)$ is source boundary or target boundary, so is $\inl\,(B,L,p)$.
        \item if $q\in \Pos_n(B)$ is target boundary, so is $\inr\,(B,L,q)$.
        \item if $q\in \Pos_n(B)$ is source boundary and $L\not=[ \, ]$, so is $\inr\,(B,L,q)$.
\end{itemize}
The converses of the last three clauses also hold, so this characterizes all source and target boundary positions of $\br\,[B,B_1,\dots,B_n]$. In particular, we see that every Batanin tree has a unique source and target boundary position of dimension $0$.

We can understand the $k$-source of a Batanin tree $B$\ as the image\footnote{This is computed as the set-theoretic image in each dimension.} of the morphism of globular sets $s_k^B : \Pos(\partial_k B) \to \Pos (B)$. As a globular subset of $\Pos(B)$, this is generated by the source $k$\-positions of $B$. By generation, we mean the smallest globular subset of $B$\ that contains the given positions. We establish this property of source boundaries, and similarly for target boundaries, via the following proposition.


\begin{proposition}\label{prop:boundary-positions-generation}
        Let $B$ a Batanin tree and $k\in \N$. The image of \[s_k^B : \Pos(\partial_k B) \to \Pos (B)\] is the globular subset of $\Pos\,(B)$ generated by the
        source boundary positions of dimension at most $k$. The image of \[t_k^B : \Pos(\partial_k B) \to \Pos (B)\] is the globular subset of $\Pos\,(B)$ generated by the target boundary positions of dimension at most $k$.
\end{proposition}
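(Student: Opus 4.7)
The plan is to prove the source statement by structural induction on $B$, with the target statement following by exchanging source and target throughout. The base case $B = \br\,[\,]$ is immediate since $\Pos(B) = \{\herepos\}$ coincides with both the image of $s_k^B = \id$ and the generated globular subset. The case $k = 0$ reduces to showing that $\herepos$ is the unique source $0$-boundary position of any Batanin tree, for which a short induction suffices: one checks directly that the other $0$-positions of $\br\,[B',B_1,\dots,B_n]$, which are of the form $\inr\,q$ with $q \in \Pos_0(\br\,L)$, each fail to be source boundary.

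For the inductive step with $B = \br\,[B',B_1,\dots,B_n]$, $L = [B_1,\dots,B_n]$ and $k \geq 1$, the recursive definition of $s_k^B$ yields
\[
    \mathrm{Im}(s_k^B) \;=\; \{\herepos\} \,\cup\, \inl\bigl(\mathrm{Im}(s_{k-1}^{B'})\bigr) \,\cup\, \inr\bigl(\mathrm{Im}(s_k^{\br\,L})\bigr),
\]
and the inductive hypotheses identify this with the set $R := \{\herepos\} \cup \inl(G_{k-1}(B')) \cup \inr(G_k(\br\,L))$, where $G_m(T) \subseteq \Pos(T)$ denotes the globular subset generated by the source boundary positions of $T$ of dimension at most $m$. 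The task is therefore to show $R = G_k(B)$. Here we use the auxiliary fact, verified by direct inspection of the recursive definitions, that for $i \geq 1$ an $i$-position $\inl\,p$ of $B$ is source boundary iff $p$ is source boundary in $B'$, and an $i$-position $\inr\,q$ of $B$ is source boundary iff $q$ is source boundary in $\br\,L$.

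For the inclusion $G_k(B) \subseteq R$, we verify that $R$ is a sub-globular set of $\Pos(B)$ containing every source boundary position of $B$ of dimension at most $k$. The converse inclusion $R \subseteq G_k(B)$ is proved by noting that the preimages $\{p \in \Pos(B') : \inl\,p \in G_k(B)\}$ and $\{q \in \Pos(\br\,L) : \inr\,q \in G_k(B)\}$ are sub-globular sets containing the relevant source boundary generators, and hence contain $G_{k-1}(B')$ and $G_k(\br\,L)$ respectively.

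The main obstacle is the dimension-$0$ behaviour of $\inl$: the defining equations give $\srcpos(\inl\,p) = \herepos$ and $\tgtpos(\inl\,p) = \inr\,\herepos_{\br\,L}$ when $p$ has dimension $0$, so $\inl$ fails to commute with source and target at the bottom. The position $\inr\,\herepos_{\br\,L}$ therefore appears when closing $R$ under source and target, and one must verify it lies in $R$ (which holds since $\herepos_{\br\,L} \in G_k(\br\,L)$) and in $G_k(B)$ (because it equals $\tgtpos(\inl\,\herepos_{B'})$, and $\inl\,\herepos_{B'}$ is a source $1$-boundary position of $B$ by the auxiliary fact).
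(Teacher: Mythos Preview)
Your argument is correct and complete for the source case; the structural induction on $B$ works as you describe, and you have correctly identified the only delicate point (the dimension-$0$ behaviour of $\inl$) and handled it properly on both sides of the inclusion.

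Your approach differs from the paper's in its organization. The paper first invokes Proposition~\ref{prop:src-tgt-equations}(4) (that $s_k^B$ is bijective below dimension $k$) to reduce the problem to a dimension-by-dimension analysis of the generated subset: it shows that at dimension $k$ the generated subset consists exactly of the source boundary positions, and at dimensions $l<k$ it equals all of $\Pos_l(B)$, the latter via a downward induction on $k-l$ combined with structural induction on the tree. Your proof instead decomposes both the image and the generated subset directly along the branching structure of $B$, matching them via the two inclusions without invoking the bijectivity result. Your route is more self-contained and avoids the auxiliary downward induction; the paper's route, on the other hand, extracts the explicit statement that $G_k(B)$ contains every position of dimension below $k$, which is used elsewhere.

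One caution: the target case does not follow by a purely formal source/target swap. Because $\herepos$ is never a target boundary position when the branch list is nonempty, the inclusion $\herepos \in G_k^t(B)$ requires a separate argument---namely, that $\inl\,(p_0)$ is target boundary for $p_0$ the unique target boundary $0$-position of $B'$, so that $\herepos = \srcpos(\inl\,p_0) \in G_k^t(B)$. The paper makes exactly this observation; you should note that the target version of your ``obstacle'' paragraph needs this adjustment rather than a literal exchange.
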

\begin{proof}
        Let $S_{kl}^B$ and $T_{kl}^B$ the sets of positions of dimension $l$ in the globular subset of $\Pos\,(B)$ generated by the source and target boundary positions of dimension at most $k$ respectively. Since $\mathbb{G}$ is a direct category, both sets are empty when $l>k$ and they satisfy for $l \le k$ that
        \[ S_{kl}^B = (\partial_l^s B) \cup \left(\bigcup\nolimits_{p\in S_{k,l+1}^B} \{\srcpos\,p,\tgtpos\,p\}  \right), \]
        \[ T_{kl}^B = (\partial_l^s B) \cup \left(\bigcup\nolimits_{p\in S_{k,l+1}^B} \{\srcpos\,p,\tgtpos\,p\}  \right). \]
        Since $s_k^B$ and $t_k^B$ are bijective on positions of dimension less than $k$, we need to show that
        \begin{itemize}
                \item a position of dimension $k$ is in $S_{kk}^B$ exactly when it is source boundary,
                \item a position of dimension $k$ is in $T_{kk}^B$ exactly when it is target boundary,
                \item $S_{kl}^B = T_{kl}^B = \Pos_l(B)$ for all $l<k$.
        \end{itemize}

        All of those statements follow trivially by the definitions when $B = D_0$ or $k = 0$, so let $B_0$ a Batanin tree and $L = [B_1,\dots, B_n]$ a list of Batanin trees and suppose that the statements hold for $B_0$ and $\br\,L$, and that they hold for $B = \br\,[B_0,B_1, \dots, B_n]$ when $k=0$.

        For any $k\in \N$, by definition of $s_{k+1}^B$ on positions of dimension $k+1$, $S_{k+1,k+1}^B$ consists of positions of the form $\inl\,(B_0,L,s_k^{B_0}\,p)$ or $\inr\,(B_0,L,s_{k+1}^{\br\,L}\,q)$. By the inductive hypothesis and the inductive characterization above, those are precisely the source boundary positions of $B$ of dimension $k+1$. The same argument shows that the second statement is also true for $B$.

        We will prove the last statement by induction on $k - l$. Let $k\in \N$. We will first show that $S_{k+1,k}^B$ contains all positions of $B$ of dimension $k$. It clearly contains all source boundary positions, so let $p\in \Pos_k(B)$ and suppose that it is not source boundary. Then one of the following holds.
        \begin{itemize}
                \item Suppose $k=0$ and $p = \inl\,(B_0,L,p_0)$ for $p_0\in \Pos_{k-1}(B_0)$ not source boundary. By the inductive hypothesis, $p_0\in S_{k,k-1}^{B_0}$, so it is the source or target of some position $p_1 \in S_{kk}^{B_0} = \partial_k^s B_0$. Then $p$ is the source or target of $\inl\,p_1 \in \partial_{k+1}^s B$, so it is in $S_{k+1,k}^B$.
                \item Suppose that $p = \inr\,(B_0,L,q)$ for some $q\in \Pos_{k}(\br\,L)$ that is not source boundary. Using the inductive hypothesis for $\br\,L$, we can conclude again as above that $p\in S_{k+1,k}^B$.
                \item Suppose finally that $k = 0$, $L = [ \, ]$ and $p = \inr\,(B_0,L, \obpos)$. Then $p$ is the target of $\inl\,(B_0,L,\obpos)$, which is source boundary, so $p\in S_{k+1,k}^B$
        \end{itemize}
        We can similarly show that $T_{k+1,k}^B$ contains all positions of $B$ of dimension $k$. It contains target boundary positions, so let $p\in \Pos_k(B)$ not target boundary. Splitting cases as above and using the same argument, it remains to show $p = \obpos \in T_{1,0}^B$. For that, let $p_0\in \Pos_0(B_0)$ the unique target boundary $0$-position of $B_0$. Then $\inl\,p_0$ is target boundary and has source $p$, so $p\in T_{1,0}^B$.

        Finally, we are left to show for $l<k$ that $S_{kl}^B = \Pos_l(B) = T_{kl}^B$, assuming that this is true for $l+1$. Since every position of dimension $l$ is either source boundary or the target of some position in $S_{k,l+1}^B$, the first equality holds. The second follows similarly.
\end{proof}

\begin{remark}
Where it does not cause confusion and simplifies our presentation, we will often write $B$\ instead of $\Pos(B)$.
\end{remark}

\section{Computads and their cells}\label{sec-computads}

In this section we will give our main definition  of the category $\Comp$ of computads, and investigate its consequences. We begin in Section~\ref{sec:maindef} with the definition of finite-dimensional computads, making heavy use of the concepts of inductive set and recursively-defined function; for those readers who might appreciate an introduction to these important concepts, we suggest the works of Pierce~\cite[Section 3.3]{pierce_textbook}, Winskel~\cite[Chapter 3]{winskel_textbook} and Mitchell~\cite[Chapter 1]{mitchell_textbook}.  We then explore this definition conceptually in Section~\ref{sec:lowdimension} with some simple examples in low dimension.
Our definition is complex, and we address wellfoundedness in Section~\ref{sec:well-foundedness}, giving a concrete interpretation via the initial algebra of a polynomial endofunctor. In Section~\ref{sec:infinitecomputads} we define the category \Comp of infinite-dimensional computads as a limit of the finite-dimensional ones.

\subsection{Main definition}
\label{sec:maindef}

Here we present an indexed inductive-recursive definition of computad. Our definition has been formalised in Agda~\cite{markakis_computads_2023}, and interested readers may view that formalisation alongside the text below, guided by references within the formalisation which indicate the relevant part of the definition.

Our definition begins by supplying some notation: we set \mbox{$\Comp_{-1} = \star$}, the terminal category, and define $\Type_{-1} : \star \to \Set$ as a functor picking out a singleton set. We then define by ordinary induction on $n\in \N$ the following structures:
\begin{itemize}
        \item A category $\Comp_n$ of $n$\-computads and $n$\-homomorphisms.
        \item A forgetful functor $u_n : \Comp_{n} \to \Comp_{n-1}$, giving for every $n$\-computad its underlying $(n-1)$\-computad.
        \item For every globular set $X$, a computad $\Free_n X$.
        \item A functor $\Cell_n : \Comp_n \to \Set$ giving for every $n$\-computad its set of cells, and for every $n$\-homomorphism its associated cell function.
        \item A functor $\Type_n : \Comp_n \to \Set$ giving for every $n$\-computad its set of $n$\-spheres, and for every $n$\-homomorphism its associated sphere function.
        \item A natural transformation $\ty_n : \Cell_{n} \to \Type_{n-1} \circ\, u_n$, giving for every cell of an $n$\-computad its boundary sphere in the underlying $(n-1)$\-computad.
        \item For every globular set $X$, a function $\postype_{n,X}$ giving an $n$\-sphere of $\Free_n X$ from a pair $(a,b)\in X_n\times X_n$ with common source and target.
        \item For every $n$-computad $C$ and $n$-cell $c\in \Cell_n(C)$, a set $\fv\,(c)$ the \emph{support} of $c$.
        \item For every Batanin tree $B$, a distinguished subset of $\Type_n\Free_n(B)$ called the \emph{full spheres}.
\end{itemize}

\noindent Here we give some intuition as to how these structures will be used later in the paper. We will see that $n$\-computads consist of sets of generators up to dimension $n$ with specified source and target, and that they form generating data for weak $\omega$\-categories, while $n$\-homomorphisms are precisely strict functors between the generated $\omega$\-categories. The assignment $\Free$ includes globular sets to $n$\-computad by forgetting its cells above dimension $n$. The functor $\Cell_n$ sends an $n$\-computad to the set of $n$\-cells of the $\omega$\-category it generates; those are either generators of the computad, or they are obtained by them by using the operations of $\omega$\-categories, which we call coherences. The functor $\Type_n$ sends an $n$-computad to the set of parallel pairs of $n$-cells. The boundary transformation sends a cell to its source and target, exhibited by a sphere of lower dimension. The function $\postype$ is an auxiliary function, viewing a pair of parallel cells of a globular set as a pair of parallel generator cells of the free $\omega$\-category it generates. The concepts of support and fullness are important for generating the operations of the $\omega$\-category.

\paragraph{Base case.} Our definition starts by giving the base case for the induction. This corresponds to the dimension $n=0$. Here we proceed as follows:

\begin{itemize}
        \item $\Comp_0 = \Set$
        \item $u_0 : \Set \to \star$ is the unique such functor
        \item For a globular set $X$, we let $\Free_0X = X_0$
        \item $\Cell_0 : \Set \to \Set$ is the identity functor
        \item $\Type_0 : \Set \to \Set$ acts as $\Type_0(C) = C \times C$, the Cartesian product
        \item $\ty_0 : \id_\Set \to \Type_{-1} \circ\, u_0$ is the unique such natural transformation
        \item For a globular set $X$, we let $\postype_{0,X}$ be the identity of $X_0\times X_0$
        \item For any $0$-computad $C$ and $c\in \Cell_0(C)= C$, we set the support of $c$ to be $\fv(c) = \{c\}$
        \item For a Batanin tree $B$, a $0$-sphere $(a,b)$ of $\Free_n B$ is a pair of $0$-positions. We will say that it is full when $a$ is source boundary and $b$ is target boundary. By the discussion in the previous section, every tree has exactly one full $0$-sphere.
\end{itemize}

\noindent
To understand this assignment, we point out that there should be no ways to ``compose objects'' in an $\omega$\-category, the $0$\-cells of the $\omega$\-category generated by a $0$\-computad should be exactly generators. That forces $\Cell_0$ to be the identity functor. Similarly, since all $0$\-cells are parallel, $\Type_0$ sends a set to the set of pairs of elements of that set. The definition of full 0-spheres derives from Maltsiniotis's notion of admissible pairs~\cite{maltsiniotis_grothendieck_2010}.

For the inductive step, we now fix $n > 0$ and suppose that we have
defined all of the required data for $n-1$.  In particular, in what follows we may freely refer to $\Comp_{n-1}$, $u_{n-1}$, $\Free_{n-1}$, $\Cell_{n-1}$, $\Type_{n-1}$, $\ty_{n-1}$, $\postype_{n-1}$ and $\operatorname{Full}_{n-1}$. The definition then proceeds as follows.

\paragraph{Objects.}  The objects of the category $\Comp_n$ are triples
$C = (C_{n-1}, V^C_n, \phi^C_n)$, called \textit{$n$\-computads}, where
\begin{itemize}
        \item $C_{n-1} \in \Comp_{n-1}$
        \item $V^C_n$ is a set, whose elements we call \emph{generators}
        \item $\phi^C_n : V^C_n \to \Type_{n-1}(C_{n-1})$ is a function, assigning to each
              generator a \emph{sphere} of $C_{n-1}$; that is, a choice of source and target cells
\end{itemize}
The forgetful functor $u_n : \Comp_n \to \Comp_{n-1}$ acts on $n$-computads by projecting out the first component.

\paragraph{Globular sets.} For a globular set $X$, we define the $n$-computad $\Free_n X$ associated to $X$ by
\[ \Free_n X = (\Free_{n-1}X, X_n, \phi^{\Free X}_{n}) \] where
\[\phi^{\Free X}_{n}(x) = \postype_{n-1,X} \, (\src \, x, \tgt \, x).\]
For $n\ge 2$, the globularity conditions ensure that the positions $\src \, x$ and $\tgt \, x$ have common source and target, so that $\postype_{n-1,X} \, (\src \, x, \tgt \, x)$ is defined. Note that $\postype_{n-1}$ has been defined by the inductive hypothesis.

\vspace{15pt}
In what follows we fix an  $n$-computad $C$, and define the following by mutual induction: the set $\Cell_n(C)$ of cells of $C$, the boundary operation $\ty_n$ for those cells, and the set of $n$\-homomorphisms $\Comp_n(D,C)$ from an arbitrary $n$-computad $D$ to $C$. The mutual nature of this induction means that we must refer to some of these sets and functions before they are defined; in particular we refer to $\Cell_n$ and $\ty_n$ in the definition of $n$\-homomorphisms given immediately below. This is a typical feature of mutual induction, and a thorough analysis is given in Section~\ref{sec:well-foundedness}.

\paragraph{Morphisms.} An $n$-homomorphism $\sigma : (D_{n-1},V^D_n,\phi^D_n)\to (C_{n-1},V^C_n,\phi^C_n)$ is a pair $(\sigma_{n-1}, \sigma_V)$ consisting of
\begin{itemize}
        \item a $(n-1)$-homomorphism $\sigma_{n-1} : D_{n-1} \to C_{n-1}$
        \item a function $\sigma_V : V^D_n \to \Cell_{n}(C_{n-1}, V_n^C, \phi^C_n)$
\end{itemize}
such that the following square commutes
\[\begin{tikzcd}[column sep = 70pt]
        V_n^D & \Cell_n(C) \\
        \Type_{n-1}(D_{n-1}) & \Type_{n-1}(C_{n-1})
        \arrow["{\phi_n^D}", from=1-1, to=2-1]
        \arrow["{\ty_{n,C}}", from=1-2, to=2-2]
        \arrow["{\Type_{n-1}(\sigma_{n-1})}"', from=2-1, to=2-2]
        \arrow["{\sigma_V}", from=1-1, to=1-2]
\end{tikzcd}\]
The forgetful functor $u_n$ is the evident one, sending $\sigma$ to $\sigma_{n-1}$.

We recall that $n$\-homomorphisms can be interpreted as strict functors between the $\omega$\-categories generated by the $n$\-computads. The universal property of free $\omega$\-categories on computads, as stated by Street~\cite{street_orientals} in the strict setting, shows that such functors are determined by their action on generators, which may be chosen freely subject to source and target conditions. Here the action on generators is given by $\sigma_{n-1}$ and $\sigma_V$, while compatibility of those actions with the source and target functions is given by the commutativity of the square above.

\paragraph{Cells and their boundary.}\label{cellsboundary} For an $n$-computad $C = (C_{n-1},V^C_n,\phi^C_n)$, the set
$\Cell_n(C)$ is defined inductively as follows:
\begin{itemize}
        \item For every element $v \in V_n^C$ an element $\var  v \in \Cell_n(C)$.
        \item For every triple $(B, A, \tau)$ where
              \begin{itemize}
                      \item $B$ is a Batanin tree with $\dim(B) \leq n$,
                      \item $A$ is a full sphere in $\Type_{n-1}(\Free_{n-1}B)$,
                      \item $\tau = (\tau_{n-1}, \tau_V)$ is an $n$-homomorphism
                            $\Free_n B \to C$
              \end{itemize}
              an element $\coh \, (B, A, \tau) \in \Cell_n(C)$.
\end{itemize}
The $n$\-cells of an $n$\-computad will become the $n$\-cells of the $\omega$\-category that it generates. The first constructor $\var$ ensures that each generator gives rise to an $n$\-cell. Cells of the form $\coh\,(B , A , \id)$ play the role of the liftings to the pair $A$ in the sense of Maltsiniotis~\cite{maltsiniotis_grothendieck_2010}, so we may interpret $\coh\,(B , A , -)$ as an ``operation'' in the $\omega$\-category generated by $C$. When \mbox{$n=\dim B$,} this can be thought of as a primitive ``composition operation'', while for $n > \dim B$ it can be thought as a primitive ``coherence law'' which relates the operations determined by the source and target of $A$. We will see some examples of this in Section~\ref{sec:lowdimension}.

For every $c\in \Cell_n(C)$, the boundary $\ty_{n,C}(c)$ of $c$ is then defined recursively as follows:
\begin{itemize}
        \item If $c = \var v$ for some $v\in V^C_n$, then
              \[\ty_{n,C}\,(\var v) = \phi_n^C(v). \]
        \item If $c = \coh\, (B, A, \tau)$ is a coherence cell, then
              \[\ty_{n,C}\,(\coh \, (B, A, \tau)) = \Type_{n-1}(\tau_{n-1})(A).\]
\end{itemize}

\paragraph{Composition and functoriality of cells.}
Let $C = (C_{n-1}, V^C_n, \phi^C_n)$ and $D = (D_{n-1}, V^D_n, \phi^D_n)$ be $n$-computads and let $\sigma = (\sigma_{n-1},\sigma_V) : C\to D$ an $n$-homomorphism. We define the function $\Cell_n(\sigma)$ and post-composition by $\sigma$ recursively as follows:
\begin{itemize}
        \item If $c = \var\, v$ for some $v\in V^C_n$, we set
              \[ \Cell_n(\sigma)(\var \, v) = \sigma_V(v) \]
        \item If $c = \coh \, (B, A, \tau)$ is a coherence cell of $C$, we set
              \[ \Cell_n(\sigma)(\coh \, (B, A, \tau)) = \coh \, (B, A, \sigma \circ \tau) \]
        \item Given an $n$-homomorphism $\tau = (\tau_{n-1}, \tau_V) : (E_{n-1}, V^E_{n}, \phi^E_n) \to C$, we define the composition of $\sigma$ and $\tau$ by
              \[ \sigma \circ \tau = (\sigma_{n-1} \circ \tau_{n-1}, v \mapsto \Cell_n(\sigma)(\tau_V(v)))\]
\end{itemize}

We may show that this definition makes $\Comp_n$ into a category and $\Cell_n$ into a functor by structural induction. For instance, given $\sigma : C\to D$ and $\tau : D\to E$ a pair of composable $n$-homomorphisms, we can easily show by induction that for every $c\in \Cell_n(C)$,
\[ \Cell_n(\tau \circ \sigma) (c) =  \Cell_n(\tau) (\Cell_n(\sigma)(c))\]
and that for every $n$-homomorphism $\rho$ with target $C$,
\[      (\tau \circ \sigma)\circ \rho = \tau\circ(\sigma\circ \rho).\]
With a similar argument, we can also show that for every $n$-computad $C$, the $n$-homomorphism given by
\[ \id_C = (\id_{C_{n-1}}, v\mapsto \var \, v)\]
is the identity of $C$ and that it acts trivially on $\Cell_n(C)$.

\paragraph{Spheres.} For an $n$-computad $C = (C_{n-1}, V^C_n, \phi^C_n)$, the set
$\Type_n(C)$ is defined to be the set of triples $(A,a,b)$
where
\begin{itemize}
        \item $A \in \Type_{n-1}(C_{n-1})$ is an $(n-1)$-sphere of $C_{n-1}$
        \item $a, b \in \Cell_n(C)$ are $n$-cells which satisfy
              \[ \ty_{n,C}(a) = \ty_{n,C}(b) = A \]
\end{itemize}
An $n$-homomorphism $\sigma = (\sigma_{n-1},\sigma_V) : C\to D$ acts on $n$-spheres by
\[ \Type_n(\sigma)(A,a,b) = (\Type_{n-1}(\sigma_{n-1})(A), \Cell_n(\sigma)(a), \Cell_n(\sigma)(b))\]
and this assignment is clearly functorial. We define also for $i = 1, 2$ a natural transformation $\pr_i : \Type_n \Rightarrow \Cell_n$ by the equations
\begin{align*}
        \pr_{1,C}(A, a, b) & = a & \pr_{2,C}(A, a, b) & = b
\end{align*}

\paragraph{Globular sets (cont.)} For a globular set $X$, the $n$-sphere associated to a pair $(p,q)\in X_n \times X_n$ with common source and target is given by
\[      \postype_{n,X} \, (p,\ q) = (\postype_{n-1,X}\, (\srcpos\, p , \tgtpos\, p),  \var\, p, \var\, q).       \]

\paragraph{Support.}
We can determine the support of a cell with a function
\[\fv : \Cell_n(C_{n-1}, V^C_n, \phi^C_n) \to \mathcal{P}(V^C_n)\]
defined by
induction on the structure of $c \in \Cell_n(C_{n-1}, V^C_n, \phi^C_n)$, as follows.
\begin{itemize}
        \item If $c = \var \, v$ for some generator $v \in V^C_n$, we set
              \[ \fv\,(\var \, v) = \{ v \} \]
        \item If $c = \coh \, (B, A, \tau)$ is a coherence cell, we set
              \[ \fv\,(\coh \, (B, A, \tau)) = \bigcup_{p \in \Pos_n(B)} \fv\,(\tau_V(p)) \]
\end{itemize}
Considering an $n$\-cell as a formal composite, the support of a cell is the set of $n$\-dimensional generators that appears in it.

\paragraph{Fullness.}  For a Batanin tree $B$, we can ask if a given $n$-sphere of $\Free_n B$ is full. We say that a sphere $(A, a, b) \in \Type_n(\Free_n B)$ is \emph{full} when:
\begin{itemize}
        \item $\fv(a) = \partial_n^s(B)$
        \item $\fv(b) = \partial_n^t(B)$
        \item $A \in \Type_{n-1} (\Free_{n-1}B)$ is full
\end{itemize}

\noindent Proposition \ref{prop:boundary-positions-generation} implies that for a full sphere $(A,a,b)$, the globular set generated by the support of $a$ and its iterated sources and targets, is the image of the $n$\-boundary of $B$ under the source inclusion $s_n^B$. Dually, the one generated by the support of $b$ and its iterated sources and targets is the image of the $n$\-boundary of $B$ under the target inclusion $t_n^B$. The converse will be shown in Proposition~\ref{full-types-as-covers}. Thus, intuitively, fullness is the condition that $a$ and $b$ ``cover'' the entire $n$\-dimensional source and target, respectively, of $B$. This is an analogue of the notion of admissible pairs of morphisms in the sense of Maltsiniotis~\cite{maltsiniotis_grothendieck_2010}, which are pairs of morphisms from a disk to a Batanin tree that cover its source and target respectively. Note that this notion of fullness is directed, in the sense that it is not symmetric with respect to $a$ and $b$ when $\dim B < n$. This is necessary for the directed behaviour of the weak $\omega$\-categories that we are modelling.

\subsection{Low-dimensional examples}
\label{sec:lowdimension}

Here we unpack our main definition in low dimensions, demonstrating its properties and behaviour in some simple cases. We give an explicit description of $1$-computads and their cells, and we  describe composition of $1$-cells, as well as exhibiting  $2$-cells that witness associativity. We describe how to extend $n$-computads to computads

\paragraph{Understanding 1-computads.}
A $1$-computad $C = (V_0,V_1,\phi : V_1\to V_0\times V_0)$ can be seen as a directed graph. We will denote a cell $f$ such that $\phi(f) = (x, y)$ by an arrow
\begin{equation*}
        \begin{tikzcd}
                x \ar[r, "f"] & y.
        \end{tikzcd}
\end{equation*}
Recall that trees of dimension at most one are of the form $B_n = [D_0,\dots,D_0]$ for $n \in \N$. Morphisms ${\star_v :\Free_1 B_0 \to C}$ are in bijection with vertices $v \in V_0$, while for $n>0$, morphisms $\sigma_{f_1,\dots,f_n}: \Free_1 B_n\to C$ correspond to sequences of composable $1$-cells
\begin{equation*}
        \begin{tikzcd}
                x_0 \ar[r, "f_1"] &
                x_1 \ar[r, "f_2"] &
                \cdots \ar[r, "f_n"] &
                x_n.
        \end{tikzcd}
\end{equation*}
Recall also that every Batanin tree has a unique full $0$-sphere. Equipped with these observations, we can unpack the inductive definition of cells (``Cells and their boundary'', page~\pageref{cellsboundary}) as follows. The constructor $\var$
embeds edges into cells. The constructor $\coh$ specialised to the tree
$B_0 = D_0$ and its unique full $0$\-sphere $(\herepos,\herepos)$ constructs a
new cell $\coh(B_0,(\herepos,\herepos),\star_v)$ out of a vertex $v\in V_0$, that we will denote by $\id_v$. Similarly, the constructor $\coh$ specialised to the tree $B_n$ for $n > 0$ with its unique full $0$\-sphere $(\herepos, \inr^n(\herepos))$ constructs a new cell $\coh(B_n,(\herepos,\inr^n(\herepos)),\sigma_{f_1,\dots,f_n})$ out of a sequence of composable cells $f_1,\dots,f_n$  as above, that we will denote $\comp(f_1,\dots,f_n)$. Since those cases exhaust all trees of dimension at most $1$ and all their full $0$\-types, the set of $1$-cells of $C$ is defined inductively by the following rules:
\begin{itemize}
        \item For each edge $e\in V_1$ with $\phi(e) = (x, y)$, there exists a cell $\var e$ with boundary sphere $\phi(e)$. That is,
              \begin{equation*}
                      \begin{tikzcd}
                              x \ar[r, "\var e"] & y.
                      \end{tikzcd}
              \end{equation*}

        \item For every vertex $v \in V_0$, there exists an \emph{identity cell}
              \begin{equation*}
                      \begin{tikzcd}
                              v \ar[r, "\id_v"] & v.
                      \end{tikzcd}
              \end{equation*}
        \item For any composable $1$-cells $f_1,\dots,f_n$, there exists a \emph{composite cell}
              \begin{equation*}
                      \begin{tikzcd}[column sep=6em]
                              x_0 \ar[r, "{\comp(f_1,\dots,f_n)}"] & x_n.
                      \end{tikzcd}
              \end{equation*}
\end{itemize}
Thus, each $1$-cell with boundary $(x, y)$ in $C$ corresponds to a parenthesised path from $x$ to $y$ built out of edges of $e$ and identity cells. Hence, a morphism of $1$-computads $\sigma :C\to D$ amounts to a function $\sigma_0 : V_0^C\to V_0^D$, together with a function assigning to every edge $e : x\to y$ of $C$, a parenthesised path from $\sigma_0(x)$ to $\sigma_0(y)$. This is how we would expect a strict functor to act on generating 1-cells.

\paragraph{Some coherence cells.}
Having defined composition of $1$-cells, we proceed to give examples of coherence $2$-cells. For this, consider the $2$-computad $\Free_2 B_3$, which we may picture as follows
\begin{equation*}
        \begin{tikzcd}
                d_0 \ar[r, "f"] &
                d_1 \ar[r, "g"] &
                d_2 \ar[r, "h"] &
                d_3,
        \end{tikzcd}
\end{equation*}
and the $1$-sphere
\[
        A = ((d_0, d_3), \comp(f, \comp(g, h)), \comp(\comp(f, g), h)).
\]
Since $d_0$ is the unique source boundary $0$-position of $B_3$ and $d_3$ is the unique target boundary position, the first component of $A$ is full. Moreover, the supports of $\comp(f, \comp(g, h))$ and of $\comp(\comp(f, g), h)$ consist of all $1$-positions of $B_3$. Since $B_3$ does not contain any higher dimensional positions, all $1$-positions of $B_3$ are both source and target boundary, making $A$ a full sphere. Therefore, we may form the following $2$-cell:
\[\alpha = \coh(B_3,A,\id).\]
Given any $2$-computad $E = (C, V_2, \psi)$, a $2$-homomorphism
$\Free_2 B_3\to E$ is the same as a $1$-homomorphism $\Free_1 B_3\to C$, since $B_3$ has no $2$-dimensional generators. Hence, for every triple of composable $1$-cells $f_1,f_2,f_3$ of $C$, we may form the following $2$-cell, which we  interpret as an associator:\[\Cell_2(\sigma_{f_1,f_2,f_3})(\alpha) : \comp(f_1,\comp(f_2,f_3)) \Rightarrow \comp(\comp(f_1,f_2),f_3)\]
Different choices of Batanin tree and full spheres induce different coherence $2$-cells. For example, let $A'$ be the full $1$-sphere defined by
\[
        A' = ((d_0, d_3), \comp(f, g, \id_{d_2}, h), \comp(f, g, h)).
\]
Then $\upsilon = \coh(B_3, A', \sigma_{f_1,f_2,f_3})$ can be seen as an unbiased sort of \emph{unitor}. Similarly we can produce coherence cells corresponding to left and right unitors, interchangers, and other higher coherences that are expected to form part of a theory of weak globular higher categories.

\paragraph{Simplicial sets.}
The non-degenerate simplices of a $2$-simplicial set $X$ assemble into a $2$-computad $C(X)$. The
underlying $1$-computad $C_1X$ is defined via the boundary function
\[\phi_1^{C(X)} : X_1^{\text{nd}} \to \Type_0(X_0) = X_0\times X_0\]
sending a non-degenerate $1$-simplex to its faces
\[\phi_1^{C(X)}(x) = (d^1x, d^0x).\]
Every $1$-simplex of $X$ gives rise to a cell of $C_1X$ via the function
\[\psi^X : X_1 \to \Cell_1(C_1X) \]
which sends a non-degenerate simplex to the generator it generates, and which sends a degenerate simplex $x = s^0d^1x = s^0d^0x$ to the coherence cell $\id_{d^1 x}$ defined above.
The computad $C_2X$ is then defined via the boundary function
\[\phi_2 : X_2^{\text{nd}} \to \Type_1(C_1X) \]
sending a $2$-simplex $x$ to
\[\phi_2(x) = \big((d^1d^1x,d^0d^1x), \comp(\psi(d^2x),\psi(d^0x)), \psi(d^1x) \big).\]
The fact that $\phi_2(x)$ is well-defined follows easily from the simplicial identities.

\[\begin{tikzcd}
                & \bullet \\
                \bullet && \bullet
                \arrow[""{name=0, anchor=center, inner sep=0}, "{d^1x}"', from=2-1, to=2-3]
                \arrow["{d^2x}", from=2-1, to=1-2]
                \arrow["{d^0x}", from=1-2, to=2-3]
                \arrow["x", shorten >=4pt, shorten <=4pt, Rightarrow, from=1-2, to=0]
        \end{tikzcd}\]

\noindent Further work is needed to extend  this construction to $n$-simplicial sets for $n>2$, since the composition operations in our computads are only weakly associative and unital.

\subsection{Remarks on the definition}
\label{sec:well-foundedness}

In Section~\ref{sec:maindef} we gave an indexed inductive-recursive definition of a category $\Comp_n$ of $n$\-computads, and certain functors and natural transformations attached to it. The definition has a complex structure; for example, it uses large induction-recursion in the parameters, since an $n$-computad has a set of variables, and sets form a proper class. As a result of this complexity, the standard techniques of induction-recursion are not sufficient to prove that our definition is well-founded. As already discussed, our definition is formalised in Agda; however, Agda supports a broad class of inductive constructions, including some cases where the semantics are not yet fully described in the literature~\cite{stackexchange-meven}.
To overcome this, we give explicit semantics for our definition in terms of the initial algebra of an endofunctor. We use this to define a depth invariant of cells and homomorphisms, which will be used in some inductive arguments later in the paper.

To begin, let us analyse the structure of our definition. At the top level, our definition proceeds by induction on the dimension $n$. The base case is quite straightforward, so let $n > 0$ and suppose that the various structures have all been defined for $n-1$. Then the definition of those structures for $n$ proceeds as follows.

First of all, the objects of the category $\Comp_n$ are given. Then the action of the forgetful functor $u_n$ on objects, and the  computads $\Free_n(X)$ obtained from a globular set $X$, can be defined in either order.

Once those structures have been defined, we fix a computad $C$ and define simultaneously the set of $n$\-cells of $C$ with the boundary natural transformation $\ty_n C$, and the sets of morphisms of computads with target $C$ with the action of the forgetful functor on them, using induction-recursion. More precisely, our inductive-recursive definition describes functions
\begin{align*}
    \ty_{n,C} : \Cell_nC &\to \Type_{n-1}C_{n-1} \\
    u_{n,D,C} : \Comp_n(D,C) &\to \Comp_{n-1}(D_{n-1}, C_{n-1})
\end{align*}
where the second family ranges over all $n$\-computads. These functions are carriers of the initial algebra of an endofunctor on the following large category:
\[
    \faktor{\Set}{\Type_{n-1}C_{n-1}} \times \prod_{D\in \Comp_n} \faktor{\Set}{\Comp_{n-1}(D_{n-1}, C_{n-1})}.
\]
Using the equivalence $\Set_J \cong \Set^J$ for every set $J$, we can easily see that this category is equivalent to the category of families of sets indexed by the class
\[
    \Type_{n-1}C_{n-1} \amalg \coprod_{D \in \Comp_n} \Comp_{n-1}(D_{n-1}, C_{n-1}).
\]
The endofunctor $F$ then takes a family of sets $\{X_A\}$ indexed by $(n-1)$\-spheres of $C_{n-1}$, and a family of sets $\{X_{D,\rho}\}$
indexed by $n$\-computads $D$ and morphisms $\rho_{n-1} : D_{n-1} \to C_{n-1}$, and returns the family of sets
\begin{align*}
    F( \{ X_A \} , \{ X_{D,\rho} \}) = &\left( \Big\{ (\phi_n^C)^{-1}(A) + \coprod_{\mathclap{B,A_0,\tau}} X_{\Free_n B, \tau} \Big\}, \right.
\\[-10pt]
&\hspace{1cm} \left. \Big\{ \prod_{\mathclap{v\in V_n^D}} X_{\Type_{n-1}(\rho_{n-1})\phi_{n}^D(v)} \Big\} \right)
\end{align*}
where the disjoint union ranges over all Batanin trees $B$ of dimension at most $n$, all full $(n-1)$\-types $A_0$ of $B$, and all morphisms of computads $\tau : \Free_{n-1}B \to C_{n-1}$ satisfying that $\Type_{n-1}(\tau)(A_0) = A$.

This endofunctor $F$ therefore encapsulates the definition  of the cells and homomorphisms for an $n$-computad $C$. On the first line, the first component amounts to the constructor $\var$ including generators into cells, while the second component amounts to the coherence constructor $\coh$. The second line amounts precisely to a function $\rho_V : V_n^D\to \Cell_n(C)$ making the square in the definition of morphisms commute.

\begin{proposition}
  The endofunctor $F$ has an initial algebra.
\end{proposition}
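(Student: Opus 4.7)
The plan is to exhibit $F$ as a (dependent) polynomial endofunctor whose representing polynomial has set-sized direction fibres, and then invoke the standard existence of initial algebras (W-types) for such functors. Although the indexing class
\[
J \;=\; \Type_{n-1}C_{n-1} \;\amalg\; \coprod_{D \in \Comp_n}\Comp_{n-1}(D_{n-1},C_{n-1})
\]
is a proper class, the ambient category is precisely that of $J$-indexed families of sets, so an initial algebra is required to be such a family rather than a single set.

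I would read off from the formula for $F$ a polynomial $J \xleftarrow{s} E \xrightarrow{p} B \xrightarrow{t} J$ whose class of positions $B$ decomposes into three pieces, one for each summand in $F$: (i) pairs $(A, v)$ with $v \in (\phi_n^C)^{-1}(A)$, having no directions; (ii) quadruples $(A, B_0, A_0, \tau)$ as in the disjoint union defining the first component of $F$, each with a single direction pointing to the morphism index $(\Free_n B_0, \tau)$; and (iii) one position per morphism index $(D,\rho)$, with directions indexed by $V_n^D$ and pointing to the cell indices $\Type_{n-1}(\rho_{n-1})\phi_n^D(v)$. The maps $s$ and $t$ are read off from these descriptions. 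The fibres of $p$ are then respectively empty, singleton, and $V_n^D$; all three are sets, the last by the definition of $n$-computad. Sanity of the polynomial also demands that Batanin trees of dimension at most $n$ form a set (clear from the inductive description in Section~\ref{sec:Batanin}) and that the hom-classes of $\Comp_{n-1}$ are sets (part of the outer induction hypothesis on $n$).

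With the polynomial data in hand, I would invoke the construction of initial algebras of polynomial endofunctors via W-types, extended pointwise to class-indexed families. The resulting initial algebra is, over each cell index $A$, the set of well-founded syntax trees rooted at either a generator of type $A$ or a coherence triple with a subtree at the appropriate morphism index; and over each morphism index $(D,\rho)$, the set of $V_n^D$-indexed tuples of cells of the prescribed types. This matches exactly the informal description of cells and homomorphisms in Section~\ref{sec:maindef}. The main obstacle is not logical but organisational: one has to verify that the three classes of positions genuinely reproduce $F$ and that every direction fibre is a set. Once the polynomial form is extracted, the existence of W-types in $\Set$ — and their pointwise extension to class-indexed families, which is unaffected by the size of $J$ because the construction is carried out one index at a time using only set-sized direction fibres — is standard and delivers the initial algebra directly.
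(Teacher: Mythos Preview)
Your approach is genuinely different from the paper's and is essentially correct, but the key step is under-argued. The paper proceeds via Adamek's construction: it builds the transfinite chain $X^0 = \emptyset$, $X^{d+1} = F(X^d)$, $X^\alpha = \colim_{d<\alpha} X^d$, and then argues concretely that the chain stabilises at $\omega+1$, using that the direction fibres over tree-indices are \emph{finite} (namely $\Pos_n(B)$), so that finite products commute with the filtered colimit at stage $\omega$. This yields not only existence but an explicit bound on the stabilisation ordinal, from which the paper extracts the depth invariant used throughout Section~\ref{sec:well-foundedness} and beyond.

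Your polynomial/W-type route correctly identifies $F$ as polynomial with set-sized direction fibres and set-sized position fibres over each index. Where you are too quick is the sentence ``pointwise extension to class-indexed families, which is unaffected by the size of $J$ because the construction is carried out one index at a time''. W-types are \emph{not} computed one index at a time: the W-type at a cell index depends on the W-types at the morphism indices it can reach, and vice versa. What makes your argument go through is the observation you stop just short of making: from any cell index the only morphism indices ever reachable are of the form $(\Free_n B,\tau)$ for a Batanin tree $B$, and these form a set (trees are a set, and $\Comp_{n-1}$ is locally small). Restricting to this set-sized sub-index gives a genuine set-indexed polynomial whose W-type exists by the usual argument; the remaining morphism indices $(D,\rho)$ for arbitrary $D$ are then filled in afterwards as $V_n^D$-indexed products of already-constructed cells. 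Alternatively, spelling out the W-type as a union over finite-depth trees is precisely Adamek's iteration, and your argument collapses into the paper's. Either way, the paper's explicit stabilisation at $\omega+1$ and the resulting depth function are not recovered by the bare W-type existence you invoke, and these are used in subsequent inductive arguments.
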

\begin{proof}
  To show that this endofunctor has an initial algebra, we use Adamek's construction of free algebras \cite{adamek_1974}. In particular, we consider the transfinite sequence of families of sets defined inductively by
  \begin{align*}
      X^0 &= \emptyset & X^{d+1} &= F(X^d) & X^\alpha &= \colim_{d<\alpha} X^d
  \end{align*}
  for $\alpha$ limit ordinal. Using that colimits commute with colimits, we see that for every $A\in \Type_{n-1}C_{n-1}$, we have
  \[
      X^{\omega+1}_A \cong X^{\omega}_A.
  \]
  which implies that for every computad $D$ and morphism $\rho : D_{n-1}\to C_{n-1}$, we have that
  \[X^{\omega+2}_{D,\rho} \cong X^{\omega+1}_{D,\rho}.\]
  Similarly, using that finite products commute with filtered colimits  we see that for every tree $B$ and every morphism $\tau : \Free_{n-1}B\to C_{n-1}$, we have
  \[
      X^{\omega+1}_{\Free_{n-1}B , \tau} \cong X^{\omega}_{\Free_{n-1}B , \tau}
  \]
  and hence also that
  \[
      X^{\omega+2}_A \cong X^{\omega+1}_A.
  \]
  Therefore, $X^{\omega+2} \cong X^{\omega+1}$, so $X^{\omega+1}$ with the isomorphism $X^{\omega+2}\to X^{\omega+1}$ is the initial algebra of $F$.
\end{proof}

\noindent In the notation of the proof, the set of cells of $C$ and the sets of morphisms with target $C$ are given by the disjoint unions
  \begin{align*}
      \Cell_n(C) &= \coprod_{{A\in \Type_{n-1}C_{n-1}}}X^{\omega+1}_A \\
      \Comp_n(D,C) &= \coprod_{{\tau : D_{n-1} \to C_{n-1}}}X^{\omega+1}_{D,\tau}
  \end{align*}
and the boundary function and the forgetful functor are given by the obvious projections. This description of cells and morphisms gives rise to an important invariant of them, that we call the \emph{inductive depth}. We define the inductive depth of a cell $c$ and a morphism $\sigma$ respectively as the least ordinal $\alpha\le \omega+1$, such that $c$ and $\sigma$ are in the image of the map $X^{\alpha}\to X^{\omega+1}$. Unwrapping the definition of the inductive depth, we see that it can be computed by the following recursive equations
\begin{align*}
  \cdpth(\var v) &= 1 \\
  \cdpth(\coh(B,A,\tau)) &= \mdpth(\tau)+1 \\
  \mdpth(\tau : D\to C) &= \sup \{ \cdpth\tau_V(v) \;:\; v\in V_n^D \}+1.
\end{align*}
All instances of structural induction used in the rest of the article can be reduced into transfinite induction on the inductive depth of cells and morphisms, by observing that the inductive hypothesis is always applied to cells and morphisms of strictly lower inductive depth.

Continuing the analysis  of our definition, once cells and morphisms have been defined, we can define composition of morphisms and the action of morphisms on cells. This can be performed by structural recursion as done in the previous section, which is fully rigorous. For maximum clarity, we reformulate this in terms of recursion on the invariants just defined.  In either case, we fix a morphism of $n$\-computads $\sigma:C\to D$, and we define mutually recursively the pre-composition operation
\[
  (\sigma \circ -) : \Comp_n(E,C) \to \Comp_n(E,D)
\]
for all $n$\-computads $E$, the action of homomorphisms on cells
\[
  \Cell_n(\sigma) : \Cell_n(C)\to \Cell_n(D),
\]
along with the proof that the following naturality square commutes:
\[\begin{tikzcd}[column sep=2.6cm]
        {\Cell_n(C)} & {\Cell_n(D)} \\
        {\Type_{n-1}(C_{n-1})} & {\Type_{n-1}(D_{n-1})}
        \arrow["{\ty_{n,D}}", from=1-2, to=2-2]
        \arrow["{\ty_{n,C}}", from=1-1, to=2-1]
        \arrow["{\Cell_n(\sigma)}", from=1-1, to=1-2]
        \arrow["{\Type_{n-1}(\sigma_{n-1})}"', from=2-1, to=2-2]
\end{tikzcd}\]
To express this mutual recursion by an induction on depth, we
define first for $d\le\omega + 1$ and for every computad $E$,
\begin{align*}
  \Cell_n^{(d)}(C) &= \coprod_{{A\in \Type_{n-1}C_{n-1}}}X^{d}_A \\
  \Comp_n^{(d)}(E,C) &= \coprod_{{\tau : D_{n-1} \to C_{n-1}}}X^{d}_{D,\tau},
\end{align*}
where $X_A^d$ and $X_{D,\tau}^d$ are defined in the proof above. We then proceed to define functions
\begin{align*}
  (\sigma\circ -)^{(d)} &: \Comp_n^{(d)}(E,C) \to \Comp_n(E,D) \\
  \Cell_n^{(d)}(\sigma) &: \Cell_n^{(d)}(C) \to \Cell_n(D)
\end{align*}
by transfinite recursion on $d$, showing together that the naturality square above commutes when restricted to $\Cell_n^{(d)}(C)$.

For $d = 0$, those functions are defined as the unique functions out of the empty sets. For $d = \omega$, they are defined by the universal property of the colimit. Finally, for $d = \alpha + 1$ a successor ordinal, we use the formulae of the previous section. Namely for $v\in V_n^C$, we set
\[
  \Cell_n^{(\alpha+1)}(\sigma)(v) = \sigma_V(v),
\]
while for every Batanin tree $B$ of dimension at most $n$, full $(n-1)$\-sphere $A$ of $B$ and morphism $\tau \in \Comp_n^{(\alpha)}(\Free_nB, C)$, we set
\[
  \Cell_n^{(\alpha+1)}(\sigma)(B,A,\tau) = (B, A, (\sigma\circ\tau)^{(\alpha)}).
\]
With this definition, we may check easily that the naturality square commutes when restricted to $\Cell_n^{(\alpha+1)}(C)$. Finally, for every $n$\-computad $E$ and morphism $\tau\in \Comp_n^{(\alpha)}(E,C)$, we set
\[
  (\sigma\circ \tau)^{(\alpha)} = (\sigma_{n-1}\circ\tau_{n-1}, \Cell_n^{(\alpha)}(\sigma)\circ \tau_V).
\]
This is a well-defined morphism of computads by commutativity of the naturality square when restricted to $\Cell_n^{(\alpha)}(C)$. The composition operation and the action of $\sigma$ on cells are given by taking $d = \omega + 1$.

In a similar manner, we may prove by mutual induction that the composition is associative and that the functor of cells preserves composition; in this case, we do this by fixing two composable arrows. Similarly, we can show that the composition is unital and that the functor of cells preserves identities. This completes the definition of the category $\Comp_n$, the functors $u_n$ and $\Cell_n$, and the natural transformation $\ty_n$. Once those are defined, the definition and the proof of functoriality of $\Type_n$ is straightforward. After that is defined, we may finally define the function $\postype_n$ and the full spheres of a Batanin tree.

\subsection{Infinite-dimensional computads}
\label{sec:infinitecomputads}

Having defined the category of $n$\-computads for every $n\in \N$, we may finally define the category of computads (or $\omega$\-computads) as follows.

\begin{definition}
        The category $\Comp$ of computads and homomorphisms is the limit of the forgetful functors:
        \[\begin{tikzcd}
                        \cdots \ar[r, "u_{n+1}"] & \Comp_n \ar[r, "u_n"]&\cdots \ar[r, "u_2"] & \Comp_1 \ar[r, "u_1"] & \Comp_0
                \end{tikzcd}\]
\end{definition}

A computad $C = (C_n)$ therefore comprises a choice for each $n$ of an $n$\-computad $C_n$, with the property that  $C_{n+1} = (C_n, V^C_{n+1}, \phi^C_{n+1})$ for some sets $V_{n+1}^C$ and functions $\phi_{n+1}^C$. A homomorphism $\sigma : C\to D$ comprises similarly a choice for each $n$ of an $n$-homomorphism $\sigma_n : C_n \to D_n$ such that $\sigma_{n+1} = (\sigma_n, \sigma_{n+1,V})$ for some functions $\sigma_{n+1,V} : V^C_{n+1} \to \Cell_{n+1}(D_{n+1})$. By definition of category $\Comp$ as a limit, there exist forgetful functors
\[U_n : \Comp \to \Comp_n,  \]
discarding the structure of a computad above dimension $n$.

\begin{example}\label{ex:disks-and-spheres}
  For every globular set $X$, the $n$-computads $\Free_n X$ for all $n\in \N$ assemble into a computad $\Free X$. In particular, the $n$-globe $D_n$ of Example \ref{example:globes-trees} gives rise to a computad \[\mathbb{D}^n = \Free D_n\] which we will also call the \emph{$n$-globe}. The \emph{spheres} $\mathbb{S}^n$ for $n\in \N\cup\{-1\}$ are another family of computads free on globular sets defined as follows: the $n$\-sphere has two $k$\-dimensional generators $V_k^{\mathbb{S}^n} = \{e_k^-,e_k^+\}$ for every $k \le n$, and no generators of higher dimension; its attaching functions are given by
  \begin{align*}
    \phi^{\mathbb{S}^\infty}_{1}(e_1^\pm)       & = (e_0^-,e_0^+),                                                                  \\
    \phi^{\mathbb{S}^\infty}_{n+2}(e_{n+2}^\pm) & = (\phi^{\mathbb{S}^\infty}_{n+1}(e_{n+1}^-), \var\,e_{n+2}^-, \var\,e_{n+2}^+ ).
  \end{align*}
  Observe that in the same way, we can define the $\infty$\-sphere $\mathbb{S}^\infty$ with two generators in every dimension.
\end{example}

We will now describe for every $n\in\N$ a way to produce a computad out of an $n$-computad. We define first the \emph{skeleton functor}
\[ \sk_n : \Comp_n \to \Comp_{n+1} \]
on an $n$-computad $C$ and on an $n$-homomorphism $\sigma : C\to D$ respectively by
\begin{align*}
        \sk_n(C) & = (C, \emptyset, \{\}) & \sk_n(\sigma) & = (\sigma, \{\})
\end{align*}
where $\{\}$ denotes the empty function.
It follows immediately from the definition that
\[ u_{n+1} \circ \sk_n = \id_{\Comp_n} \]
and that for every pair $(C, D)$ where $C$ is an $n$-computad and $D$ is an $(n+1)$-computad the functor $u_{n+1}$ induces a natural bijection
\begin{align*}
        u_{n+1} & : \Comp_{n+1}(\sk_n C, D) \xrightarrow{\sim} \Comp_n(C, u_{n+1} D),
\end{align*}
whose inverses sends an $n$-homomorphism $\sigma$ to $(\sigma , \{\})$. This shows that $\sk_n$ is left adjoint to $u_{n+1}$ with unit the identity natural transformation.
In particular, $\sk_n$ is fully faithful and injective on objects with image the replete full subcategory of $(n+1)$-computads with no generators of dimension $(n+1)$.

Repeatedly applying the skeleton functors, we get a functor
\[ \Sk_n : \Comp_n  \to \Comp \]
determined by
\[ U_m \circ \Sk_n =
        \begin{cases}
                u_{m+1} \cdots u_n,     & \text{if } m\le n \\
                \sk_{m-1} \cdots \sk_n, & \text{if } m>n.
        \end{cases}\]
As in the finite case, it follows easily that $\Sk_n$ is left adjoint to $U_n$ with unit the identity, and that it is fully faithful with image the replete full subcategory of computads with no generators of dimension more than $n$.

\section{Defining \texorpdfstring{$\omega$}{ω}-categories}
\label{sec-weak-omega-categories}

\subsection{The free computad adjunction}

As we saw in the previous section, every globular set $X$ gives rise to a computad $\Free X$ whose generators are the cells of $X$ and the boundary functions are given by the source and target functions of $X$. This process can easily be made functorial. Recursively on $n\in \N$, we will extend the assignment $\Free_n$ of the previous section to a functor
\[ \Free_n : \Glob \to \Comp_n\]
by setting for a morphism of globular sets $f = (f_n) : X\to Y$,
\begin{align*}
        \Free_0f     & = f_0                                   \\
        \Free_{n+1}f & = (\Free_n f, x\mapsto \var (f_n \, x))
\end{align*}
We let the \emph{free computad functor}
\[\Free : \Glob \to \Comp\]
be the functor induced by this sequence.

On the other hand, recall that for each $n > 0$, an element of $\Type_n(C)$ is a triple $(A,a,b)$
where $A \in \Type_{n-1}(C_{n-1})$ is an $(n-1)$-sphere of $C_{n-1}$ and $a, b \in \Cell_n(C)$ are $n$-cells which satisfy $\ty_{n,C}(a) = \ty_{n,C}(b) = A$. It follows immediately that for every computad $C$, the sets of cells $\Cell_n(U_n C)$ of $C$ form a globular set $\Cell\,(C)$ with source and target maps given by the functions
\[\pr_1 \ty_n: \Cell_n(U_n C) \to \Cell_{n-1}(U_{n-1}C)\]
and
\[\pr_2 \ty_n: \Cell_n(U_n C) \to \Cell_{n-1}(U_{n-1}C)\]
respectively. This construction may be extended to a functor
\[\Cell : \Comp \to \Glob\]
using that $\Cell_n U_n$ is a functor for every $n\in \N$, and the source and target maps are natural.
Similarly, we may define a functor of spheres
\[ \Type : \Comp\to \Glob \]
where the source and target of an $n$-sphere are both given by $\ty_n\pr_1 = \ty_n \pr_2$.

\begin{proposition}
        The functor $\Cell$ is right adjoint to $\Free$.
\end{proposition}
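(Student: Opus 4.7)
The plan is to construct a natural bijection
\[
\Comp(\Free X, C) \cong \Glob(X, \Cell C)
\]
by exhibiting an explicit unit and checking its universal property by induction on dimension. Define the unit $\eta_X : X \to \Cell \Free X$ by $(\eta_X)_n(x) = \var x$. This is a morphism of globular sets because $\ty_{n, \Free X}(\var x) = \phi_n^{\Free X}(x) = \postype_{n-1, X}(\src x, \tgt x)$, and unfolding $\postype$ shows that its second and third projections are $\var \src x$ and $\var \tgt x$. The proposed adjunct of $\sigma : \Free X \to C$ is then $\Cell(\sigma) \circ \eta_X$, and naturality in $X$ and $C$ follows immediately from the recursive definition of $\Cell$ on homomorphisms.

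For the inverse direction, given $f : X \to \Cell C$, I construct $\hat f : \Free X \to C$ dimension by dimension. Set $\hat f_0 = f_0$, and having defined the $(n-1)$-homomorphism $\hat f_{n-1}$, let $\hat f_n = (\hat f_{n-1}, f_n)$, using the identification $V_n^{\Free X} = X_n$. The content of the inductive step is to verify that the required compatibility square commutes, namely that
\[
\Type_{n-1}(\hat f_{n-1}) \bigl(\postype_{n-1, X}(\src x, \tgt x)\bigr) = \ty_{n, C}(f_n(x)) \qquad \text{for every } x \in X_n.
\]
Unfolding $\postype$, and using that $\Cell_{n-1}(\hat f_{n-1})(\var y) = f_{n-1}(y)$ by the recursion defining $\Cell_n$ on homomorphisms, the left-hand side becomes a triple whose last two components are $f_{n-1}(\src x)$ and $f_{n-1}(\tgt x)$. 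Since $f$ is a morphism of globular sets, these coincide with the second and third projections of $\ty_{n, C}(f_n(x))$, and the agreement of first components reduces to the inductive hypothesis applied to $\src x \in X_{n-1}$.

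Finally, the triangle identities follow cheaply: $\Cell(\hat f) \circ \eta_X = f$ by the recursion $\Cell_n(\hat f_n)(\var x) = (\hat f_n)_V(x) = f_n(x)$; and for uniqueness, any $\sigma : \Free X \to C$ satisfying $\Cell(\sigma) \circ \eta_X = f$ must have $\sigma_{n, V}(x) = f_n(x)$ on each generator, while $\sigma_{n-1}$ is forced by induction. Since an $n$-homomorphism out of $\Free_n X$ is completely determined by its underlying $(n-1)$-homomorphism together with its action on the generating set $V_n^{\Free X}$, this forces $\sigma = \hat f$. The main obstacle is the compatibility computation in the inductive step outlined above; every other step is a matter of chasing definitions.
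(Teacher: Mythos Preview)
Your proof is correct and follows essentially the same route as the paper: both construct the unit $\eta_X$ by $x \mapsto \var x$ (with the trivial adjustment at dimension $0$, where $\Cell_0$ is the identity and there is no $\var$), and then exhibit the inverse transpose. The paper packages the latter as an explicit counit $\varepsilon_{C,n} = (\varepsilon_{C,n-1}, \id_{\Cell_n C})$ and defers the triangle laws, whereas you build the transpose $\hat f_n = (\hat f_{n-1}, f_n)$ directly and verify the compatibility square; these are the same construction, since $\hat f = \varepsilon_C \circ \Free f$ unwinds to exactly your formula.
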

\begin{proof}
        We will prove the proposition by constructing the unit and counit of the adjunction
        \begin{align*}
                \eta        & : \id_{\Glob} \Rightarrow \Cell \circ \Free, \\
                \varepsilon & : \Free\circ \Cell \Rightarrow \id_{\Comp}
        \end{align*}
        The component of the unit $\eta$ on a globular set $X$ is the morphism
        \[\eta_X : X\to \Cell\,(\Free X)\]
        defined on $x\in X_n$ by
        \[\eta_{X,n}(x) = \begin{cases}
                        x,         & \text{if }n=0      \\
                        \var\,(x), & \text{if }n \ge 1. \\
                \end{cases}\]

        The component of the counit $\varepsilon$ on a computad $C = (C_n)$ is the homomorphism given by the sequence
        \[\varepsilon_{C,n} : \Free_n(\Cell\,(C)) \to C_n\]
        defined by induction on $n\in \N$ as follows: The $0$-homomorphism $\varepsilon_{C,0}$ is the identity of the set $C_0$. The $n$-homomorphism $\varepsilon_{C,n}$ for $n\ge 1$ is the pair $(\varepsilon_{C,n-1}, \varepsilon_{C,n,V})$ where
        \[\varepsilon_{C,n,V} : V^{\Free\,(\Cell\,C)}_n = \Cell_n(C) \to \Cell_n(C)\]
        is the identity function. Naturality of both constructions and the triangle laws can be easily verified.

\end{proof}

\begin{corollary}\label{cor:representability-of-cells}
        The functors $\Cell_nU_n,$ and $\Type_nU_n : \Comp\to \Set$ are represented by the \emph{$n$-globe} $\mathbb{D}^n$ and the \emph{$n$-sphere} $\mathbb{S}^n$ respectively. Under those representations, the source and target of an $(n+1)$-cell is given by composition with the source and target inclusions $\Free(s_n^{D_{n+1}})$ and $\Free(t_n^{D_{n+1}})$, respectively.
\end{corollary}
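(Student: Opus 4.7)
The plan is to derive both representability statements from the adjunction $\Free \dashv \Cell$ established in the preceding proposition, combined with the Yoneda lemma applied to $\Glob = \Set^{\mathbb{G}^{\op}}$. The source/target description then follows from naturality of the hom-set bijection together with the explicit identification of $\Pos(D_n)$ with the representable globular set.

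For the first claim, I would first recall from Example~\ref{example:globes-trees} that $\Pos(D_n) \cong \mathbb{G}(-,[n])$, so by Yoneda $\Glob(\Pos(D_n), Y) \cong Y_n$ naturally in the globular set $Y$. Since $\mathbb{D}^n = \Free(D_n)$ by definition, chaining the adjunction isomorphism with Yoneda gives a natural bijection
\[
\Comp(\mathbb{D}^n, C) \cong \Glob(\Pos(D_n), \Cell(C)) \cong \Cell(C)_n = \Cell_n(U_n C),
\]
which witnesses the representability of $\Cell_n U_n$ by $\mathbb{D}^n$.

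For the second claim, I would identify $\mathbb{S}^n$ with $\Free(S^n)$, where $S^n$ is the globular set obtained by removing the top cell from $\Pos(D_{n+1})$; that is, $S^n_k = \{e_k^-,e_k^+\}$ for $k \le n$, with no higher cells, and boundary maps inherited from $\Pos(D_{n+1})$. Inspection of the attaching functions in Example~\ref{ex:disks-and-spheres} shows this matches the definition of $\mathbb{S}^n$ given there. The globular set $S^n$ is the pushout $\Pos(D_n) \sqcup_{\Pos(\partial D_n)} \Pos(D_n)$, so a morphism $S^n \to Y$ in $\Glob$ is precisely a pair of parallel $n$-cells of $Y$. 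Chaining through the adjunction then yields
\[
\Comp(\mathbb{S}^n, C) \cong \Glob(S^n, \Cell(C)) \cong \Type_n(U_n C),
\]
naturally in $C$.

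For the source and target statement, observe that under the Yoneda identification $\Glob(\Pos(D_n),Y) \cong Y_n$, the source map $Y_{n+1} \to Y_n$ is precisely precomposition with the representable source inclusion $\Pos(D_n) \to \Pos(D_{n+1})$, which by Proposition~\ref{prop:src-tgt-equations} and the identification of globes with representables is exactly $s_n^{D_{n+1}}$; the analogous statement holds for the target. Applying the functor $\Free$ and using naturality of the adjunction bijection in the first variable, the source of the $(n+1)$-cell of $C$ classified by $f : \mathbb{D}^{n+1} \to C$ is the $n$-cell classified by $f \circ \Free(s_n^{D_{n+1}})$, and similarly for the target. The only subtlety is checking that $\Free(S^n)$ genuinely coincides with the computad $\mathbb{S}^n$ from Example~\ref{ex:disks-and-spheres}, which amounts to unwinding the inductive definition of $\Free$ on the boundary maps of $S^n$ and comparing with the formulas for $\phi^{\mathbb{S}^\infty}_k$ given there; this is routine but is the only place any calculation is needed.
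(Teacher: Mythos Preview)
Your argument is correct and follows essentially the same route as the paper: both use the adjunction $\Free \dashv \Cell$ together with the Yoneda identification $\Pos(D_n) \cong \mathbb{G}(-,[n])$ for $\Cell_n$, and both realise $\mathbb{S}^n$ as $\Free$ applied to the globular pushout $D_n \sqcup_{S^{n-1}} D_n$ for $\Type_n$. The paper packages the sphere case as an induction on $n$ that simultaneously tracks which maps induce $\pr_i$ and $\ty_n$ under the representations, whereas you go directly via the description of $\Glob(S^n,-)$ as parallel pairs and appeal to naturality of the adjunction for the source/target claim; these are minor differences in exposition rather than in content.

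One notational slip: the object you denote $\Pos(\partial D_n)$ is not what you want. In the paper's conventions $\partial_k$ applied to a Batanin tree is again a Batanin tree, and $\partial_{n-1} D_n = D_{n-1}$, whose positions form the $(n-1)$-disk, not the $(n-1)$-sphere. The correct pushout is along $S^{n-1}$ (via the boundary inclusion $i_n : S^{n-1} \to \Pos(D_n)$ as in the paper), which is not of the form $\Pos(B)$ for any Batanin tree $B$. Replace $\Pos(\partial D_n)$ by $S^{n-1}$ and the argument goes through unchanged.
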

\begin{proof}
        The computad $\mathbb{D}^n$ is free on $D_n$, which is isomorphic to the representable globular set on $[n]$. Hence for every computad $C$, there exists a natural isomorphism
        \[\Comp\,(\mathbb{D}^n, C) \cong \Glob\,(\mathbb{G}(-,[n]), \Cell\,C) \cong \Cell_n(C).\]
        This isomorphism sends a morphism $\sigma : \mathbb{D}^n\to C$ to the image of the unique generator of $\mathbb{D}^n$ of dimension $n$.

        Let now $S_n$ the globular set that the $n$-sphere is free on and let
        $j_n^\pm : D_n\to S_n$ be the morphism of globular sets which sends the position of $D_n$ of dimension $n$ to $e_n^\pm$, and let $i_n : S_{n-1}\to D_n$ be the morphism of globular sets which sends $e_k^-$ and $e_k^+$ to the iterated source and target of dimension $k$ of that position. Then, the following square is a pushout in the category of globular sets:
        \[\begin{tikzcd}
                        {S_{n-1}} & {D_n} \\
                        {D_n} & {S_n} \pomark
                        \arrow["{j_n^-}"', from=2-1, to=2-2]
                        \arrow["{j_n^+}", from=1-2, to=2-2]
                        \arrow["{i_n}", from=1-1, to=1-2]
                        \arrow["{i_n}"', from=1-1, to=2-1].
                \end{tikzcd}\]
        Moreover,
        \begin{align*}
                s_n^{D_n+1} & =  i_{n+1}j_n^- &
                t_n^{D_n+1} & =  i_{n+1}j_n^+
        \end{align*}
        We will construct for every $n\ge -1$ a representation of $\Type_nU_n$ by $\mathbb{S}^n$ such that the natural transformations $\pr_1, \pr_2$ and $\ty_n$ are induced by $j_n^-$, $j_n^+$ and $i_n$ respectively.

        The functor $\Free$ is cocontinuous, so $\mathbb{S}^{-1} = \Free(S_{-1})$ is the initial computad. Since $\Type_{-1}U_{-1}$ is constant on a singleton set, it is represented by the $(-1)$\-sphere. Moreover, $\ty_0U_0$ is induced by $\Free(i_0)$ being the only homomorphism out of the initial computad. Suppose therefore for $n\in\N$ that a representation of $\Type_{n-1}U_{n-1}$ by $\mathbb{S}^{n-1}$ has been given such that $\ty_nU_n$ is induced by $\Free(i_n)$. Then there exists a commutative diagram of the following form, where both the interior and exterior squares are pullbacks:
        \[\begin{tikzcd}
                        {\Type_n(C)} &&& {\Cell_n(C)} \\
                        & {\Comp(\mathbb{S}^n,C)} \pbmark & {\Comp(\mathbb{D}^n,C)} \\
                        & {\Comp(\mathbb{D}^n,C)} & {\Comp(\mathbb{S}^{n-1},C)} \\
                        {\Cell_n(C)} &&& {\Type_{n-1}(C)}
                        \arrow["{\ty_n}", from=1-4, to=4-4]
                        \arrow["{\pr_1}"', from=1-1, to=4-1]
                        \arrow["{\ty_n}"', from=4-1, to=4-4]
                        \arrow["{\pr_2}", from=1-1, to=1-4]
                        \arrow["{(j_n^-)^*}"', from=2-2, to=3-2]
                        \arrow["{(j_n^+)^*}", from=2-2, to=2-3]
                        \arrow["{(\iota_n)^*}"', from=3-2, to=3-3]
                        \arrow["{(\iota_n)^*}", from=2-3, to=3-3]
                        \arrow["\sim", from=1-4, to=2-3]
                        \arrow["\sim", from=4-4, to=3-3]
                        \arrow["\sim", from=4-1, to=3-2]
                        \arrow["\sim", dashed, from=1-1, to=2-2]
                \end{tikzcd}\]
        The universal properties of those squares gives a representation of $\Type_nU_n$ by~$\mathbb{S}^n$.

        To show that $\ty_{n+1}U_{n+1}$ is induced by $\Free(i_{n+1})$ and complete the proof, we need to show that if $A_n \in \Type_n(\mathbb{S}^n)$ is the universal $n$-sphere, then the sphere of the top dimensional position of $D_{n+1}$ is $\Type_{n}\Free\,(i_{n+1})(A_n)$. The explicit description of $j_n^\pm$ and commutativity of the diagram above shows that $A_n$ comprises of the cells $e_n^-$ and $e_n^+$. Those are sent to the source and target of the top dimensional position of $D_{n+1}$ under $\Cell_n\Free_n(i_{n+1})$, so $\ty_{n+1}U_{n+1}$ is indeed induced by $\Free(i_{n+1})$.
\end{proof}


\subsection{The free \texorpdfstring{$\omega$}{ω}-category monad}

The free computad adjunction
\[\begin{tikzcd}
                {\Free: \Glob} & {\Comp : \Cell}
                \arrow[""{name=0, anchor=center, inner sep=0}, shift right=1.5, from=1-1, to=1-2]
                \arrow[""{name=1, anchor=center, inner sep=0}, shift right=1.5, from=1-2, to=1-1]
                \arrow["\dashv"{anchor=center, rotate=90}, draw=none, from=0, to=1]
        \end{tikzcd}\]
induces a monad $(\fc^{w},\eta^w,\mu^w)$ on the category of globular sets, whose endofunctor sends a globular set $X$ to the cells $\Cell\,(\Free X)$ of the free computad on~it.

\begin{definition}
        We will call $\fc^w$-algebras \emph{$\omega$-categories} and their morphisms \emph{homomorphisms}. We will denote their category by $\omega\Cat$. We will call $\fc^w$ the free (weak) $\omega$\-category monad.
\end{definition}

\noindent
We note that these morphisms correspond to \textit{strict} functors, since being an algebra homomorphism amounts to preserving the coherence operations strictly. For a way to obtain weak homomorphisms, see the work of Garner~\cite{garner_homomorphisms_2010}. We will also write
\begin{align*}
        F^w  : \Glob & \to \omega\Cat, & U^w  : \omega\Cat & \to \Glob
\end{align*}
for the \emph{free $\omega$-category} and \emph{underlying globular set} functors and
\[K^w : \Comp\to \omega\Cat\]
for the comparison functor making the following triangle commute in both directions:
\[\begin{tikzcd}
                \Comp && \omega\Cat \\
                & \Glob
                \arrow["\Free", shift left=2, from=2-2, to=1-1]
                \arrow["\Cell", from=1-1, to=2-2]
                \arrow["K^w", from=1-1, to=1-3]
                \arrow["F^w", from=2-2, to=1-3]
                \arrow["U^w", shift right=-2, from=1-3, to=2-2]
        \end{tikzcd}\]

We proceed to show that $\omega\Cat$ is a locally finitely presentable category, and that homomorphisms of computads and free $\omega$-categories coincide.

\begin{proposition}
        The monad $\fc^w$ is finitary, i.e. it preserves directed colimits.
\end{proposition}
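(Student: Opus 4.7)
The plan is to reduce the claim to a finite-support property of cells, via the representability result of Corollary~\ref{cor:representability-of-cells}. Since $\Free$ is a left adjoint and so preserves all colimits, $\fc^w = \Cell \circ \Free$ is finitary iff $\Cell$ preserves directed colimits of diagrams of the form $(\Free X_i)_{i\in I}$. By Corollary~\ref{cor:representability-of-cells}, $(\Cell C)_n \cong \Comp(\mathbb{D}^n, C)$, so the task reduces to showing that for each $n$ and each directed colimit $X_\infty = \colim_i X_i$ in $\Glob$, every homomorphism $\mathbb{D}^n \to \Free(X_\infty)$ factors essentially uniquely through some $\Free(X_i)$.

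First I would establish as a preliminary lemma that for any computad $C$, every cell $c\in \Cell_n(C)$ has finite support $\fv(c)$, and more generally that any morphism $\sigma: \Free_n B \to C$ out of a tree-based computad uses only finitely many generators of $C$ in each dimension. Both statements follow by a straightforward mutual induction on inductive depth: variable cells have singleton support, while $\coh(B,A,\tau)$ has support a finite union, over the finitely many positions of $B$, of supports of the $\tau_V(p)$, each finite by induction.

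The main step is then a mutual induction on the inductive depth invariant of Section~\ref{sec:well-foundedness}, proving simultaneously that every cell $c\in \Cell_n(\Free X_\infty)$ lifts to $\Cell_n(\Free X_i)$ for some $i$, that every homomorphism $\tau : \Free_n B \to \Free X_\infty$ factors through some $\Free X_i$, and that both liftings are unique up to moving to a later stage. For a variable cell $\var(x)$, directedness of colimits in $\Set$ (computed pointwise for globular sets) gives $x\in (X_i)_n$ for some $i$. For a coherence cell $\coh(B,A,\tau)$, the inductive hypothesis factors $\tau$ through some stage, and the same factorization works for the coherence. For a morphism $\tau$, its values on the finitely many positions of $B$ each lift to some stage by the hypothesis, and directedness of $I$ gives a common stage where all lifts agree; the commuting-square condition is preserved by functoriality. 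Uniqueness is argued dually, using that equalities in $\Free X_\infty$ reflect to some later stage.

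The main obstacle is the bookkeeping of the mutual induction: cells are defined in terms of morphisms which are defined in terms of cells, so the inductive depth must strictly decrease at each step. Section~\ref{sec:well-foundedness} provides precisely the framework to make this rigorous, reducing the argument case-by-case to the hypothesis at strictly smaller depth. A more abstract formulation would be to prove that each $\mathbb{D}^n$ is finitely presentable in $\Comp$; this is equivalent but perhaps less transparent than the direct structural argument above.
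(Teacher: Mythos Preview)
Your proposal is correct and follows a genuinely different route from the paper's proof. Both arguments are organised around the inductive depth of Section~\ref{sec:well-foundedness} and the double induction on dimension $n$ and depth $d$, but they differ in how the step is carried out.

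The paper works at the level of functors rather than elements: it defines $C^B_{d,n}(X) = \Comp_n^{(d)}(\Free_n B,\Free_n X)$ and shows each such functor is finitary by categorical commutativity facts. For $B=D_n$ the inductive step unfolds $C^{D_n}_{n,d+1}$ as $X_n$ plus a coproduct of $C^{B'}_{n,d}$'s; for general $B$ it invokes the density lemma to write $C^B_{n,d+1}$ as a \emph{connected} limit of the globe cases and then uses that connected limits commute with filtered colimits in $\Set$. Your approach instead argues elementwise: you lift cells and morphisms to a finite stage by finite support, and handle injectivity by reflecting equalities. What the paper's route buys is that surjectivity and injectivity of the comparison map are packaged together in the single assertion ``this functor is finitary'', and the passage from globes to arbitrary trees is a one-line appeal to connected-limit/filtered-colimit commutativity rather than a hands-on coherence check that finitely many lifts can be made compatible at a common stage. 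What your route buys is transparency: the finite-support lemma makes the combinatorial content visible, and no appeal to density or limit-colimit interchange is needed. Your sketch of injectivity (``equalities reflect to a later stage'') is the one place where a reader might want more detail; it goes through by the same mutual induction on depth, using that $\var$ and $\coh$ constructors are jointly injective and that the transition maps $\Free X_i\to\Free X_\infty$ are generator-preserving.
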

\begin{proof}
        Using the notation of Section~\ref{sec:well-foundedness}, we have for every $n\in \N$, every $B$ Batanin tree of dimension at most $n$ and $0\le d \le \omega+1$, a functor
        \[ C_{d,n}^B \cong \Comp_n^{(d)}(\Free_n B, \Free_n -) : \Glob \to \Set \]
        sending a globular set $X$ to the subset of homomorphisms $\sigma : \Free B\to \Free X$ satisfying that $U_n\sigma$ has depth at most $d$. It suffices to show that those functors are finitary, since $\fc^w_n \cong C^{D_n}_{n,\omega+1}$. To do so, we proceed by induction on $n$ and then induction on $d$.

        Suppose first that $n = 0$ and hence $B = D_0$. By convention all $0$\-cells have depth $1$ being generators, so $C^{D_0}_{n,d}$ is constant on the empty set for $d < 2$. Otherwise, it is isomorphic to the functor sending a globular set $X$ to the set $X_0$. In either case, it is cocontinuous.

        Fix now $n > 0$ and let $B$ a tree of dimension at most $n$. The functor $C^B_{n,0}$ is constant on the empty set, so it is cocontinuous. The functor $C^B_{n,\omega}$ is the colimit of $C^B_{n,d}$ for finite $d$, so it finitary by commutativity of colimits with colimits. Therefore, it remains to show that $C^B_{n,d+1}$ is finitary for $0 \le d \le \omega$. Letting first $B = D_n$, we have that $C^{D_n}_{n,1}$ is constant on the empty set, while for $d>0$, we have that
        \[
          C^{D_n}_{n,d+1}(X) \cong \Cell_n^{(d)}(\Free_n X) \cong X_n +\coprod_{\substack{\dim B \le n \\ A\in \operatorname{Full}_{n-1}(B)}} C_{n,d}^B(X)
        \]
        which is finitary, since colimits commute with colimits, the functors $C_{n,d}^B$ are finitary and the functor $X\mapsto X_n$ is cocontinuous. Letting then $B = D_k$ for $k < n$,
        we have that
        \[
          C^{D_k}_{n,d+1} \cong C^{D_k}_{n-1,\omega+1}
        \]
        as $D_k$ has no $n$\-positions, so $C^{D_k}_{n,d+1}$ is finitary as well. Finally, for $B$  arbitrary, we can easily see that the isomorphism
        \[\Comp\,(\Free B,\Free-) \cong \lim_{\substack{k\le n, \\p\in \Pos_k\,(B)}}\Comp\,(\mathbb{D}^k,\Free-)\]
        given by the density lemma for presheaves and cocontinuity of $\Free$ restricts to an isomorphism
        \[C^B_{n,d+1} \cong \lim_{k,p}C^{D_k}_{n,d+1}.\]
        Since connected limits and filtered colimits of sets commute, this implies that $C^B_{n,d+1}$ is also finitary.
\end{proof}

\begin{corollary}
        The category $\omega\Cat$ is locally finitely presentable.
\end{corollary}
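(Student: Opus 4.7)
The plan is to invoke the classical result that the category of algebras for a finitary monad on a locally finitely presentable category is itself locally finitely presentable; see for instance Adámek--Rosický. So the proof reduces to two observations, both of which we have in hand.

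First, I would note that $\Glob$ is locally finitely presentable. This is immediate from the fact that $\Glob$ is the category of set-valued presheaves on the small category $\mathbb{G}$, and every such presheaf category is locally finitely presentable, with the representables $\mathbb{G}(-,[n])$ forming a generating set of finitely presentable objects.

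Second, the preceding proposition establishes that the monad $\fc^w$ on $\Glob$ is finitary. Combining these two facts with the standard theorem on algebras for finitary monads yields that $\omega\Cat = \Glob^{\fc^w}$ is locally finitely presentable. There is no substantive obstacle here; the real content is packaged in the finitariness of $\fc^w$, which was handled above by the inductive depth analysis.
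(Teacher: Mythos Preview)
Your proposal is correct and matches the paper's own proof essentially verbatim: the paper simply cites the fact that $\omega\Cat$ is the category of algebras of a finitary monad on a locally finitely presentable category \cite[Remark~2.76]{adamek_1994}. Your additional remarks that $\Glob$ is locally finitely presentable as a presheaf category and that the finitariness of $\fc^w$ is the preceding proposition are exactly the two ingredients the paper is tacitly invoking.
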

\begin{proof}
        It is the category of algebras of a finitary monad on a locally finitely presentable category \cite[Remark~2.76]{adamek_1994}.
\end{proof}

\begin{proposition}\label{morphisms-are-functors}
        The comparison functor $K^w : \Comp\to \omega\Cat$ is fully faithful.
\end{proposition}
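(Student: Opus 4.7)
The plan is to show that $\sigma \mapsto \Cell(\sigma)$ defines a bijection $\Comp(C,D) \xrightarrow{\sim} \omega\Cat(K^w C, K^w D)$ for every pair of computads $C, D$. Faithfulness is essentially free from the inductive definition in Section~\ref{sec:maindef}: an $n$\-homomorphism has the form $\sigma_n = (\sigma_{n-1}, \sigma_{V,n})$ with $\sigma_{V,n}(v) = \Cell_n(\sigma_n)(\var v)$, so induction on $n$ recovers the entire data of $\sigma$ from $\Cell(\sigma)$.

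For fullness, let $f : \Cell C \to \Cell D$ be an algebra homomorphism, i.e., a globular morphism satisfying $f \circ \alpha_C = \alpha_D \circ \fc^w f$, where $\alpha_C = \Cell\varepsilon_C$ is the algebra structure on $K^w C$. I plan to construct $\sigma : C \to D$ by induction on $n$, setting $\sigma_0 := f_0$ and, at level $n > 0$, $\sigma_{V,n}(v) := f_n(\var v)$, under the outer hypothesis that $\Cell_{n-1}(\sigma_{n-1}) = f_{n-1}$. The attaching compatibility $\ty_{n,D}(\sigma_{V,n}(v)) = \Type_{n-1}(\sigma_{n-1})(\phi_n^C(v))$ then reduces, via the fact that a globular morphism respects iterated source and target together with the outer hypothesis, to a routine componentwise verification on the sphere structure.

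Closing the outer induction requires proving $\Cell_n(\sigma_n) = f_n$, and I intend to do this by an inner induction on the inductive depth of cells from Section~\ref{sec:well-foundedness}. Generators $\var v$ are handled by construction. For a coherence cell $\coh(B,A,\tau)$, I will factor the parameterising morphism through the adjunction as $\tau = \varepsilon_C \circ \Free\bar\tau$, with $\bar\tau : \Pos(B) \to \Cell C$ its transpose. Applying the algebra-homomorphism equation to $\coh(B, A, \Free\bar\tau) \in \fc^w_n(\Cell C)$ yields
\[
  f_n(\coh(B,A,\tau)) = \coh\bigl(B, A, \varepsilon_D \circ \Free(f \circ \bar\tau)\bigr),
\]
while by definition $\Cell_n(\sigma_n)(\coh(B,A,\tau)) = \coh(B, A, \sigma_n \circ \tau)$. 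Equality of these two coherence cells reduces, via faithfulness, to checking that the parameterising morphisms $\Free_n B \to D_n$ agree on each position of $B$: at dimensions below $n$ this follows from the outer hypothesis, while at dimension $n$ each side sends a position $p$ to $f_n(\tau_V(p))$ and $\Cell_n(\sigma_n)(\tau_V(p))$ respectively, and these match by the inner hypothesis, since $\tau_V(p) \in \Cell_n(C)$ has strictly smaller inductive depth than $\coh(B, A, \tau)$.

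The main obstacle I anticipate is precisely this circularity at level $n$: defining $\sigma_n$ via $f_n$ on generators while verifying $f_n = \Cell_n(\sigma_n)$ on coherence cells in turn requires matching behaviour of the sub-homomorphism $\sigma_n \circ \tau$ appearing inside the coherence. The inductive depth invariant introduced in Section~\ref{sec:well-foundedness}, which was developed precisely for this kind of argument, is the device that stratifies the coherences and breaks the loop.
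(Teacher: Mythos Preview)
Your proposal is correct and follows essentially the same approach as the paper: an outer induction on the dimension $n$ defining $\sigma_n = (\sigma_{n-1}, f_n \circ \var)$, combined with an inner structural induction on cells to verify $\Cell_n(\sigma_n) = f_n$, using the transpose $\tau^\dagger$ of the parameterising morphism and the algebra-homomorphism identity to reduce coherence cells to their constituent positions. The only difference is cosmetic: you phrase the inner induction in terms of the inductive-depth invariant of Section~\ref{sec:well-foundedness}, whereas the paper invokes structural induction directly, and you spell out the attaching-compatibility check for $\sigma_n$ that the paper leaves implicit.
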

\begin{proof}
        Let $C$ and $D$ be computads and $f : K^wC \to K^wD$ a homomorphism. We will show by induction on $n\le \omega$ that there exists a unique $n$-homomorphism $\sigma_n : C_n\to D_n$ satisfying for every finite $k\le n$ and every $k$-cell $c\in\Cell_k(C_n)$ that
        \[ \Cell_k(\sigma_n)(c) = f(c). \]
        Then $\sigma_\omega$ must be the unique homomorphism such that $K^w\sigma_\omega = f$.

        For $n=0$, the equation above is satisfied only by the function $\sigma_0 = f_0$. Assuming then that unique $\sigma_n$ exists for some $n\in \omega$, define $\sigma_{n+1}$ to consist of $\sigma_n$ and the composite
        \[ V_{n+1}^C\xrightarrow{\var} \Cell_{n+1}(C)\xrightarrow{f_{n+1}} \Cell_{n+1}(D).\]
        This is the only morphism satisfying the equation above for cells of dimension at most $n$ and for $(n+1)$-dimensional generators. Given a coherence cell ${c = \coh\,(B,A,\tau)}$ of dimension $n+1$, let $\tau^\dagger:B\to \Cell\,(C)$ the morphism of globular sets given by $p\mapsto \tau_V(p)$. Then
        \[ \tau = \varepsilon_{n+1,C} \circ \Free_{n+1}(\tau^\dagger) \]
        and we may assume by structural induction on cells that
        \[ \Cell(\Sk_{n+1}\sigma_{n+1}) \circ \tau^\dagger = f \tau^\dagger. \]
        Since $f$ is a homomorphism of $\omega$-categories, we may also observe that
        \begin{align*}
                f(c) & = f\circ \Cell(\varepsilon_C) (\coh\,(B,A,\Free_{n+1}\tau^\dagger))         \\
                     & = \Cell(\varepsilon_D \circ \Free\, f)(\coh\,(B,A,\Free_{n+1}\tau^\dagger)) \\
                     & = \coh\,(B,A, \varepsilon_{n+1,D} \circ \Free_{n+1}(f \tau^\dagger)).
        \end{align*}
        Combining the last two equations and using naturality of $\varepsilon_C$, we see that
        \[f(c) = \Cell_{n+1}(\sigma_{n+1})(c),\]
        so $\sigma_{n+1}$ satisfies the claimed equation. Finally, we let $\sigma_\omega = (\sigma_n) : C\to D$.
\end{proof}

\begin{remark}
        Since the functor $K^w$ is injective-on-objects and fully faithful, by abuse of notation, we will treat $K^w$ as a subcategory inclusion, identifying a computad with the $\omega$-category it generates.
\end{remark}

\begin{remark}
        By initiality of the Kleisli adjunction, there exists also a second
        fully faithful comparison functor \(L^w : \operatorname{KL}^w \to \Comp\) from the Kleisli category of \(\fc^w\) to \(\Comp\), giving an
        (identity-on-objects, fully faithful) factorisation of the functor
        \(\Free\). In Section~\ref{sec:var-to-var}, we will define a (fully faithful, identity-on-objects) factorisation of the functor \(\Free\)
        via the subcategory \(\Comp^{\var}\) defined there.
\end{remark}

\paragraph{Composition operations in $\omega$-categories.}
Let $B$ be a Batanin tree. In a strict $\omega$-category, $B$-shaped diagrams of cells can be composed (uniquely) into a single cell of dimension $\dim\,B$, or more generally into a cell of any dimension $n\ge \dim\,B$ by also using identity operations. For weak $\omega$-categories, we will now also define for every tree $B$ and $n \ge \dim\,B$ an $n$-cell $\comp^B_n$ of $B$  with a similar interpretation. A more general discussion of operations in $\omega$-categories in terms of globular operads and theories follows in Section~\ref{sec:globular-theories}. 

Intuitively, the cell corresponding to the unbiased composition over $B$ is the coherence cell filling the sphere determined by the unbiased composite of the source and target of this diagram. Formally, we define inductively for every $n\in \N$ and Batanin tree $B$ a full $n$-sphere $A_{B,n}$, and when $n\ge \dim\,B$ an $n$-cell $\comp_n^B$. In the base case, the only Batanin tree of dimension $0$ is $D_0$ and we let
\[  \comp^{D_0}_0 = \obpos \]
its unique cell. We let then for every Batanin tree $B$,
\[  A_{B,0} = (s_0^B(\comp_0^{\partial_0 B}), t_0^B(\comp_0^{\partial_0 B})) \]
be its unique full $0$-sphere.


Let now $n\in \N$ and suppose that the full spheres $A_{B,n}$ have been defined for every Batanin tree.
Then we may let
\[\comp^{D_{n+1}}_{n+1} = \var\,(\inl^{n+1}(\obpos)) \]
the unique top dimensional position of $D_{n+1}$ and for every other Batanin tree $B$ of dimension at most $n+1$,
\[\comp^{B}_{n+1} = \coh\,(B, A_{B,n}, \id).\]
In both cases, we have constructed a cell with boundary $A_{B,n}$, since the source and target of the top dimensional position of $D_{n+1}$ are its unique source and target $n$-positions. Finally, we define for arbitrary Batanin tree, the sphere $A_{B,n+1}$ by
\begin{align*}
        \pr_1A_{B,n+1} = \Cell_{n+1}\Free_{n+1}(s^B_{n+1})(\comp^{\partial_{n+1}B}_{n+1}), \\
        \pr_2A_{B,n+1} = \Cell_{n+1}\Free_{n+1}(s^B_{n+1})(\comp^{\partial_{n+1}B}_{n+1}).
\end{align*}
This sphere is full by Proposition \ref{prop:boundary-positions-generation}.

As a particular example, binary composites of $(n+k+1)$-cells along a common $k$-dimensional boundary correspond to the Batanin tree $B_{k,n}$ inductively defined by
$B_{0,n} = \br\,[D_n, D_n]$ and $B_{k+1,n} = \br\,[B_{k,n}]$. Note that this notion of composition operation implicitly also contains identity operations. Namely, the source and target of the cell $\comp^{D_n}_{n+1}$ are both the top dimensional position of $D_n$, allowing the operation corresponding to this cell to be interpreted as an identity operation.

\section{The generator-preserving subcategory}\label{sec:var-to-var}

An important class of morphisms between computads is given by those maps which ``preserve generating data''.

\begin{definition}
        A morphism of $n$-computads $\sigma : C \to D$ is said to be \emph{generator-preserving} when it satisfies the following inductively defined property:
        \begin{itemize}
                \item When $n = 0$, every morphism is generator-preserving.

                \item When $n > 0$ is finite, a morphism $\sigma$ is generator-preserving when $\sigma_{n-1}$ is generator-preserving and for each generator $v \in V_C$, we have that $\sigma_V(v) = \var(w)$ for some generator $w \in V_D$.

                \item When $n = \omega$, a morphism is generator-preserving when each of its projections, $\sigma_k$, is generator-preserving.
        \end{itemize}
\end{definition}

It follows that a generator-preserving morphism, $\sigma$, determines and is uniquely determined by a unique assignment
\[
        \sigma'_k : V^C_k \longrightarrow V^D_k
\]
from generators of $C$ to generators of $D$ for every finite $k\le n$. It is easily verified that the class of generator-preserving morphisms contains identities and is closed under composition.

\begin{definition}
        For each $n$, we define $\Comp_n^{\var}$ to be the  subcategory of $\Comp_n$ whose morphisms are generator-preserving. We write  $\zeta_n : \Comp_n^{\var} \hookrightarrow \Comp_n$ for the inclusion.
\end{definition}

\begin{remark}\label{generator-preserving-two-out-of-three}
        Suppose that we have the following commutative triangle of morphisms of computads:
        \[\begin{tikzcd}
                        & D \\
                        C && E
                        \arrow["\rho"', from=2-1, to=2-3]
                        \arrow["\sigma", from=2-1, to=1-2]
                        \arrow["\tau", from=1-2, to=2-3]
                \end{tikzcd}\]
        where $\rho$ is a generator-preserving $n$-homomorphism.
        If a generator of $C$ were sent to a coherence cell by $\sigma$, the same would be true for $\rho$, so $\sigma$ must also be a generator-preserving morphism.
        This cancellation property implies in particular that all isomorphisms of $n$-computads must be generator-preserving.
\end{remark}

\begin{remark}
        The forgetful functors $u_n$ and $U_n$ can be restricted to give functors
        \begin{align*}
                u_n^{\var} & : \Comp^{\var}_n \to \Comp^{\var}_{n-1} \\
                U_n^{\var} & : \Comp^{\var} \to \Comp_n^{\var}
        \end{align*}
        between the subcategories of generator-preserving homomorphisms, so that the functors $U_n^{\var}$ form a limit cone above the diagram
        \[\begin{tikzcd}
                        \cdots \ar[r, "u^{\var}_{n+1}"] & \Comp^{\var}_n \ar[r, "u^{\var}_n"]&\cdots \ar[r, "u^{\var}_2"] & \Comp^{\var}_1 \ar[r, "u^{\var}_1"] & \Comp^{\var}_0.
                \end{tikzcd}\]
\end{remark}

\begin{example}
        Whenever $\sigma : C\to D$ is a generator-preserving $n$-homomorphism, both $\sk_n\sigma$ and $\Sk_n\sigma$ are generator-preserving homomorphisms, so we may restrict the two functors to get functors
        \begin{align*}
                \sk^{\var}_n & : \Comp_n^{\var} \to \Comp_{n+1}^{\var} \\
                \Sk^{\var}_n & : \Comp_n^{\var} \to \Comp^{\var}
        \end{align*}
        left adjoint to $u_{n+1}^{\var}$ and $U_n^{\var}$ respectively. The functors $u_{n+1}^{\var}$ and $U_{n}^{\var}$ further admit right adjoints
        \begin{align*}
                \cosk^{\var}_n & : \Comp_n^{\var} \to \Comp_{n+1}^{\var} \\
                \Cosk^{\var}_n & : \Comp_n^{\var} \to \Comp^{\var}.
        \end{align*}
        The former extends an $n$-computad $C$ by its set of $n$-spheres and an $n$-homomorphism $\sigma : C\to D$ by its action on $n$-spheres
        \begin{align*}
                \cosk^{\var}_n(C) & = (C, \Type_n(C), \id), & \cosk^{\var}_n(\sigma) = (\sigma,\var \circ\Type_n(\sigma)).
        \end{align*}
        The latter is defined similarly to $\Sk_n$ by
        \[ U_m^{\var}\circ\Cosk_n^{\var} =
                \begin{cases}
                        u_{m+1}^{\var}\cdots u_n^{\var},         & \text{if }m\le n \\
                        \cosk_{m-1}^{\var}\cdots \cosk_n^{\var}, & \text{if }m>n    \\
                \end{cases}
        \]
\end{example}

\begin{example}
        The image of the free computad functor $\Free : \Glob \to \Comp$ is contained in $\Comp_n^{\var}$. Thus, factoring through $\zeta_n$, we obtain a functor
        \[
                \Free^{\var} : \Glob \to \Comp^{\var}.
        \]
        It is easily seen that $\Free^{\var}$ is fully faithful. Let $X$ and $Y$ be two globular sets and $\sigma = (\sigma_n) : \Free X \to \Free Y$ a generator-preserving homomorphism. Then there exists for every $n\in \N$ a unique function $\sigma_{n,V}' : X_n \to Y_n$ such that $\sigma_0 = \sigma_{0,V}'$ and for every $n\ge 1$ and $x\in X_n$,
        \[\sigma_{n,V}(x) = \var(\sigma_{n,V}' x).\]
        For every such $x\in X_n$, we easily compute that
        \begin{align*}
                \pr_1(\Type_{n-1}(\sigma_{n-1})(\phi_n^{\Free X} x)) & = \var(\sigma_{n-1,V}'(\src\, x))   \\
                \pr_1(\ty_{n}(\sigma_{n,V}(x)))                      & = \var\, (\src\, (\sigma_{n,V}'x))  \\
                \pr_2(\Type_{n-1}(\sigma_{n-1})(\phi_n^{\Free X} x)) & = \var(\sigma_{n-1,V}'(\tgt\, x))   \\
                \pr_2(\ty_{n}(\sigma_{n,V}(x)))                      & = \var\, (\tgt\, (\sigma_{n,V}'x)).
        \end{align*}
        On the other hand, we also know that
        \[\ty_n(\sigma_{n,V}(x)) = \Type_{n-1}(\sigma_{n-1})(\phi_n^{\Free X} x),\]
        since $\sigma_n$ is an $n$-homomorphism, so
        \begin{align*}
                \src \, (\sigma_{n,V}' x) & = \sigma_{n-1,V}' (\src\, x) &
                \tgt \, (\sigma_{n,V}' x) & = \sigma_{n-1,V}' (\tgt\, x)
        \end{align*}
        In other words, the functions $\sigma_{n,V}'$ constitute a morphism of $\sigma' : X\to Y$, which one may easily check is the unique morphism satisfying that $\sigma = \Free(\sigma')$.
\end{example}

\begin{proposition}\label{prop-comp-cocomplete}
        For each $n$, the category $\Comp_n^{\var}$ is cocomplete. Furthermore, the inclusion $\zeta_n : \Comp_n^{\var} \to \Comp_n$ preserves all colimits.
\end{proposition}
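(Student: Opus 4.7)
We proceed by induction on $n$. When $n = 0$ there is nothing to show, since $\Comp_0^{\var} = \Comp_0 = \Set$ and $\zeta_0$ is the identity. For the inductive step with finite $n > 0$, fix a diagram $D : \mathcal{J} \to \Comp_n^{\var}$ and write $D(j) = (D(j)_{n-1}, V_j, \phi_j)$. The plan is to assemble the colimit levelwise: take $\bar{C}_{n-1}$ to be the colimit in $\Comp_{n-1}^{\var}$ of $u_n^{\var} \circ D$, which exists by the inductive hypothesis, with coprojections $\bar\iota_j$; separately, let $V$ be the colimit in $\Set$ of the sets $V_j$ along the generator-level maps $D(f)'_V$, with coprojections $\kappa_j$. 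The composites $\Type_{n-1}(\bar\iota_j) \circ \phi_j$ form a cocone on that diagram of sets---this step is precisely where generator-preservation of each $D(f)$ is used---so they induce a unique function $\phi : V \to \Type_{n-1}(\bar{C}_{n-1})$, and I would set $\bar{C} := (\bar{C}_{n-1}, V, \phi)$ with coprojections $\bar\iota_j^{\var} := (\bar\iota_j, \kappa_j)$.

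To verify the universal property in $\Comp_n^{\var}$, a competing cocone $(\tau^j : D(j) \to E)_j$ decomposes into its $(n-1)$-parts $\tau^j_{n-1}$ and its generator-level parts $(\tau^j)'_V : V_j \to V^E_n$, which factor uniquely through $\bar{C}_{n-1}$ and $V$ respectively; the compatibility square ensuring that the resulting pair is a morphism in $\Comp_n^{\var}$ is then checked on each coprojection $\kappa_j$, where it reduces to the compatibility satisfied by $\tau^j$. For preservation by $\zeta_n$, the same argument applies to a cocone of arbitrary $n$-homomorphisms in $\Comp_n$, with the sole change that the $V$-parts $\tau^j_V$ now land in $\Cell_n(E)$. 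These still form a cocone of sets: for $f : j \to j'$ in $\mathcal{J}$, generator-preservation of $D(f)$ gives $D(f)_V = \var \circ D(f)'_V$, and hence $\tau^j_V = \Cell_n(\tau^{j'}) \circ D(f)_V = \tau^{j'}_V \circ D(f)'_V$ by the defining formulas for composition and for $\Cell_n$ on generators. The induced map $\bar\tau_V : V \to \Cell_n(E)$, together with the $(n-1)$-part provided by the inductive preservation hypothesis for $\zeta_{n-1}$, assembles into the unique factorization.

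For the case $n = \omega$, a diagram $D : \mathcal{J} \to \Comp^{\var}$ yields by post-composition with each $U_k^{\var}$ a coherent system of diagrams in $\Comp_k^{\var}$, with colimits $\bar{C}_k$ by the finite case. Since $u_{k+1}^{\var}$ has a right adjoint $\cosk_k^{\var}$ (Example~5.5), it is cocontinuous, so $u_{k+1}^{\var}(\bar{C}_{k+1}) \cong \bar{C}_k$ canonically; a compatible choice of representatives then exhibits the $\bar{C}_k$'s as the underlying $k$-computads of a single $\bar{C} \in \Comp^{\var}$, with cocone and universal property inherited levelwise. Preservation by $\zeta_\omega$ follows by projecting an arbitrary cocone in $\Comp$ to each dimension and invoking preservation at that level, with uniqueness tying the levels together via the identity $U_n \circ \zeta_\omega = \zeta_n \circ U_n^{\var}$. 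The main obstacle throughout is bookkeeping rather than any conceptual difficulty: the construction above realizes $\Comp_n^{\var}$ as a kind of Grothendieck construction over $\Comp_{n-1}^{\var}$ attached to the functor $\Type_{n-1}$, and cocompleteness of such categories always follows from cocompleteness of the base.
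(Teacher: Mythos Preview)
Your proof is correct and follows essentially the same route as the paper: build the colimit levelwise from the $(n-1)$-colimit and the colimit of generator sets, then verify the universal property in both $\Comp_n^{\var}$ and $\Comp_n$. You spell out more than the paper does---in particular the cocone check for the attaching map $\phi$, the explicit computation $\tau^j_V = \tau^{j'}_V \circ D(f)'_V$ for preservation, and the use of the right adjoint $\cosk_k^{\var}$ to justify that levelwise colimits assemble at $n=\omega$---whereas the paper simply states that the universal properties are ``easily verified'' and that the $\omega$ case is ``pointwise''.
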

\begin{proof}
        We prove the statement by induction on $n$.
        The statement holds trivially for $n=-1$. Suppose that $n\in\N$. Given a diagram $J : \mathcal{D} \to \Comp^n_{\var}$, we set:
        \begin{align*}
                (\colim\, J)_{n-1} & = \colim\nolimits_{A \in D}\,{(J A)_{n-1}}, &
                V^{\colim\, J}_n   & = \colim\nolimits_{A \in D} V^{J A}_n
        \end{align*}
        and we define the boundary function $\phi^{\colim\, J}_n : V^{\colim\, J}_n \to \Type_{n-1}(\colim\, J)$ to be the following canonical morphism:
        \[
                V^{\colim\,J}_n = \colim\, V_n^{J}
                \longrightarrow
                \colim\, \Type_{n-1}(u_{n-1} J)
                \longrightarrow
                \Type_{n-1}(\colim J)_{n-1}
        \]
        The universal property of this $n$-computad in both $\Comp_n$ and $\Comp_n^{\var}$ is now easily verified. When $n = \omega$, we define colimits pointwise.
\end{proof}

\begin{lemma}
        For each $n$, the category $\Comp_n^{\var}$ has a terminal object $\top_n$.
\end{lemma}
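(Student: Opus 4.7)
The plan is to construct $\top_n$ by induction on $n$, leveraging the right adjoint functors $\cosk_n^{\var}$ introduced in the preceding example. The base case $n = 0$ is immediate, since $\Comp_0^{\var} = \Comp_0 = \Set$ and so the singleton $\top_0 = \{*\}$ is terminal.

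For the inductive step, given $\top_n$ terminal in $\Comp_n^{\var}$, I would set
\[ \top_{n+1} := \cosk_n^{\var}(\top_n) = (\top_n, \Type_n(\top_n), \id). \]
Since $\cosk_n^{\var}$ is right adjoint to $u_{n+1}^{\var}$, it preserves all limits and in particular terminal objects. Unfolding this explicitly, a generator-preserving morphism $\sigma : C \to \top_{n+1}$ consists of a generator-preserving $n$-morphism $\sigma_n : C_n \to \top_n$, which is unique by the inductive hypothesis, together with an assignment $\sigma_V' : V_{n+1}^C \to V_{n+1}^{\top_{n+1}} = \Type_n(\top_n)$; the compatibility square combined with $\phi_{n+1}^{\top_{n+1}} = \id$ forces $\sigma_V' = \Type_n(\sigma_n) \circ \phi_{n+1}^C$, so $\sigma$ is uniquely determined.

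For the infinite-dimensional case, I would take $\top_\omega := (\top_n)_{n \in \N}$. This sequence is compatible with the forgetful functors because the counit of the adjunction $u_{n+1}^{\var} \dashv \cosk_n^{\var}$ is strictly the identity (one has $u_{n+1}^{\var} \cosk_n^{\var}(C) = C$ on the nose), so $u_{n+1}^{\var}(\top_{n+1}) = \top_n$ and the sequence genuinely defines an object of $\Comp^{\var}$. Since $\Comp^{\var}$ is the limit of the tower of $\Comp_n^{\var}$ recalled in the preceding remark, and terminal objects in a limit of categories are assembled componentwise, $\top_\omega$ is terminal in $\Comp^{\var}$.

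No substantial obstacle arises: the essential content is carried by the right adjointness of $\cosk_n^{\var}$ and the limit description of $\Comp^{\var}$, both established just above. The only minor bookkeeping is the observation that the counit of $u_{n+1}^{\var} \dashv \cosk_n^{\var}$ is the identity, which ensures that iterating the right adjoint produces a genuinely compatible family rather than one merely compatible up to isomorphism.
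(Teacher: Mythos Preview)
Your proposal is correct and follows essentially the same approach as the paper: induction using the right adjoint $\cosk_n^{\var}$ to build $\top_{n+1}$ from $\top_n$, and then assembling the compatible sequence for the $\omega$-case. The only cosmetic differences are that the paper starts the induction at $n=-1$ rather than $n=0$, and packages the infinite case as $\Cosk_0^{\var}(\top_0)$ rather than spelling out the sequence directly.
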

\begin{proof}
        We proceed by induction on $n$. The result is trivial when $n =- 1$. Hence suppose that $n\in \N$ and that $\Comp_{n-1}^{\var}$ has a terminal object $\top_{n-1}$. Then since $\cosk^{\var}_{n-1} : \Comp_{n-1}^{\var} \to \Comp_n^{\var}$ is a right adjoint, it preserves limits and so we may define $\top_n = \cosk^{\var}_{n-1}(\top_{n-1})$. Explicitly $(\top_n)_{n-1} = \top_{n-1}$, and $\top_n$ has a unique $n$-cell for each $(n-1)$-sphere in $\top_{n-1}$. Finally, we define $\top_{\omega}$ to be the computad $\Cosk^{\var}_0(\top_0) =(\top_0, \ldots, \top_n, \ldots)$.
\end{proof}

\subsection{\texorpdfstring{$\Comp^{\var}$}{Comp\^var} is a presheaf topos}

It turns out that $\Comp^{\var}$ is extremely well-behaved; in fact we will prove that it is a presheaf category. This is consistent with work of Batanin~\cite{batanin_computads_2002}, although our approach is significantly different, as discussed in the Introduction. This presheaf property is striking since it is well known that the category of strict computads and generating data preserving maps is not a presheaf topos~\cite{makkai_category_2008}. Thus, this result can be seen as evidence that weak higher categories are, in a sense, better behaved than strict higher categories.

The following construction will play a pivotal role in this section. Here we make reference to the \textit{category of elements} or \emph{Grothendieck construction} of a presheaf $X : \mathcal{C}^{\op} \to \Set$. It is the category $\el X$ with class of objects the disjoint union of the sets $X(c)$ for $c\in \mathcal{C}$, and morphisms $f : (c,x) \to (c',x')$ the morphisms $f : c\to c'$ such that $X(f)(x') = x$.

\begin{theorem}\label{lem-Cell-is-familially-representable}
        Let $\Cell^{\var}$ and $\Type^{\var}$ be the restriction of the cell and sphere functors to the generator-preserving subcategory, as follows:
        \begin{align*}
                \Cell^{\var} & = \Cell \circ\, \zeta : \Comp^{\var} \to \Glob
                \\
                \Type^{\var} & = \Type \circ\, \zeta : \Comp_n^{\var} \to \Glob
        \end{align*}
        Then these functors are familially representable; there exist globular sets $\mathfrak{C}$, and $\mathfrak{T}$, together with functors from their category of elements
        \begin{align*}
                J^\mathfrak{C} : \el\mathfrak{C} \to \Comp^{\var}
                 &
                \qquad
                 &
                J^\mathfrak{T} : \el\mathfrak{T} \to \Comp^{\var}
        \end{align*}
        such that we have isomorphisms, natural in $C \in \Comp^{\var}$ and $n \in \mathbb{G}$,
        \begin{align*}
                \Cell^{\var}_n (C) & \cong \coprod_{c_0 \in \mathfrak{C}_n} \Comp^{\var}(J^\mathfrak{C}(c_0),C)   \\
                \Type^{\var}_n (C) & \cong \coprod_{A_0 \in \mathfrak{T}_n} \Comp^{\var}(J^\mathfrak{T}(A_0), C).
        \end{align*}
\end{theorem}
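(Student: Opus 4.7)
The plan is to take the indexing globular sets to be the cells and spheres of the terminal object $\top$ of $\Comp^{\var}$, setting $\mathfrak{C} = \Cell(\top)$ and $\mathfrak{T} = \Type(\top)$. For every $C \in \Comp^{\var}$, the unique map $!_C : C \to \top$ induces a function $\Cell_n^{\var}(C) \to \mathfrak{C}_n$ that sorts each cell according to its image in the terminal computad, and the content of the theorem is that each fiber of this projection is represented by an explicit computad. The goal is to construct, for each $c_0 \in \mathfrak{C}_n$, a computad $J^\mathfrak{C}(c_0)$ equipped with a distinguished cell $\hat{c}_0 \in \Cell_n(J^\mathfrak{C}(c_0))$ whose image in $\top$ is $c_0$, and similarly for spheres, such that the assignment $\sigma \mapsto \Cell_n(\sigma)(\hat{c}_0)$ gives a bijection onto the corresponding fiber.

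I would define both representable assignments by a single mutual structural induction using the inductive depth invariant of Section~\ref{sec:well-foundedness}, with base cases $J^\mathfrak{T}(\ast_{-1}) = \mathbb{S}^{-1}$ and $J^\mathfrak{C}_0(\ast) = \mathbb{D}^0$. For a sphere $A_0 = (A_0', a_0, b_0) \in \mathfrak{T}_n$, I would take $J^\mathfrak{T}(A_0)$ to be the pushout $J^\mathfrak{C}(a_0) \sqcup_{J^\mathfrak{T}(A_0')} J^\mathfrak{C}(b_0)$, where the two legs are produced inductively from the boundaries $\ty(\hat{a}_0)$ and $\ty(\hat{b}_0)$, which both lie in the fiber over $A_0'$ and are therefore classified by unique maps from $J^\mathfrak{T}(A_0')$. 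For a generator cell $c_0 = \var(v_0) \in \mathfrak{C}_n$ with $\phi(v_0) = A_0$, I would set $J^\mathfrak{C}(c_0) = J^\mathfrak{T}(A_0) \sqcup_{\mathbb{S}^{n-1}} \mathbb{D}^n$ along the boundary inclusion and take $\hat{c}_0$ to be $\var$ of the newly adjoined $n$-generator. For a coherence cell $c_0 = \coh(B, A_0, \tau_0)$ with transpose $\tau_0^\dagger : B \to \mathfrak{C}$ under $\Free \dashv \Cell$, I would take $J^\mathfrak{C}(c_0)$ to be the colimit in $\Comp^{\var}$ of the composite $\el B \xrightarrow{\tau_0^\dagger} \el \mathfrak{C} \xrightarrow{J^\mathfrak{C}} \Comp^{\var}$, which exists by Proposition~\ref{prop-comp-cocomplete}; this colimit is equipped with a canonical map $\overline{\tau_0} : \Free B \to J^\mathfrak{C}(c_0)$, and I would set $\hat{c}_0 = \coh(B, A_0, \overline{\tau_0})$. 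Functoriality of $J^\mathfrak{C}$ and $J^\mathfrak{T}$ on $\el \mathfrak{C}$ and $\el \mathfrak{T}$ is read off from these pushouts and colimits, source and target morphisms in $\mathbb{G}$ yielding canonical boundary inclusions between representables that also deliver naturality in the globular direction.

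The natural transformation $\Phi^\mathfrak{C}_C$ sends $(c_0, \sigma)$ to $\Cell_n(\sigma)(\hat{c}_0)$, and I would construct its inverse by structural induction on a cell $c \in \Cell_n^{\var}(C)$. A variable cell $\var(w)$ determines $c_0 = \var(!_C(w))$ with the classifying map into $J^\mathfrak{C}(c_0)$ assembled from the pushout universal property using $w$ together with, inductively, the sphere classification of $\ty(\var w)$; a coherence cell $\coh(B, A, \tau)$ determines $c_0 = \coh(B, A, !_C \circ \tau)$ with classifying map obtained by gluing, via the colimit universal property, the inductively constructed maps $J^\mathfrak{C}(\tau_0^\dagger(p)) \to C$ that classify each $\tau(p)$. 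A double induction then shows $\Phi^\mathfrak{C}$ and this inverse are mutually inverse, and the same argument handles $\Phi^\mathfrak{T}$.

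The hardest step will be the coherence case, where one must prove that every morphism $\Free B \to C$ whose projection to $\top$ equals $\tau_0$ is captured by the colimit. This amounts to transposing such morphisms via $\Free \dashv \Cell$ to globular maps $B \to \Cell(C)$ refining $\tau_0^\dagger$ and then identifying them with cocones on the colimit diagram via the inductive hypothesis applied pointwise to each position. Well-foundedness of the induction on inductive depth is essential and holds because $\cdpth(\coh(B, A_0, \tau_0)) = \mdpth(\tau_0) + 1 > \cdpth(\tau_0^\dagger(p))$ for every position $p$ of $B$. A final subtlety is verifying that the boundary of $\hat{c}_0$ in $J^\mathfrak{C}(c_0)$ agrees with the image of $\hat{A}_0$ under the canonical map $J^\mathfrak{T}(A_0) \to J^\mathfrak{C}(c_0)$, ensuring that the two families of isomorphisms for cells and spheres are compatible with source and target morphisms in $\mathbb{G}$.
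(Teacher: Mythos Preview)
Your proposal is correct and follows essentially the same approach as the paper: the indexing globular sets are taken to be $\Cell(\top)$ and $\Type(\top)$, the representables for spheres are built as pushouts $J^\mathfrak{C}(a_0)\sqcup_{J^\mathfrak{T}(A_0')}J^\mathfrak{C}(b_0)$, the representable for a generator cell is obtained by adjoining a single new generator over the sphere representable, and the representable for a coherence cell $\coh(B,A,\tau_0)$ is the colimit over $\el B$ of $J^\mathfrak{C}\circ\tau_0^\dagger$, with the universal property verified by transposing along $\Free\dashv\Cell$ and invoking the inductive hypothesis pointwise. The paper organises the induction first on the ambient dimension $n$ and then on the structure of top-dimensional cells, while you phrase it directly in terms of inductive depth, but these amount to the same recursion.
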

\begin{proof}
        We will show more generally that the functor giving the $k$-cells of an $n$-computad and the functor giving the  $k$-spheres for $k\le n$ are familially representable. Let $\mathfrak{C}$ and $\mathfrak{T}$ the globular sets of cells and spheres of the terminal computad $\top_{\omega}$ respectively, and define for $c_0\in \mathfrak{C}_k$ and $A_0\in \mathfrak{T}_k$ the set \emph{of cells of shape $c_0$} of an $n$-computad $C$ and that \emph{of spheres of shape $A_0$} by
        \begin{align*}
                \Cell\,(C; c_0) & = \{ c\in \Cell_k(C) \,:\, \Cell(!)(c) = c_0.\} \\
                \Type\,(C; A_0) & = \{ A\in \Type_k(C) \,:\, \Type(!)(A) = A_0.\}
        \end{align*}
        where $!$ denotes the unique generator-preserving homomorphism to $\top_{n}$. It is easy to see that generator-preserving homomorphisms preserve the shape of cells and spheres, so both definitions give rise to functors $\Comp_n^{\var} \to \Set$, for which we have natural isomorphisms,
        \begin{align*}
                \Cell^{\var}_k(\Sk_nC) & = \coprod_{c_0\in \mathfrak{C}_k} \Cell\,(C; c_0), \\
                \Type^{\var}_k(\Sk_nC) & = \coprod_{A_0\in \mathfrak{T}_k} \Type\,(C; A_0).
        \end{align*}
        The natural transformations $\pr_i$ for $i=1,2$, and the natural transformations $\ty$, $\src$ and $\tgt$ for $k>0$ may be restricted to natural transformations that we denote by the same names
        \begin{align*}
                \pr_i & : \Type\,(-;A_0) \Rightarrow \Cell\,(-;\pr_i A_0)  \\
                \ty   & : \Cell\,(-;c_0) \Rightarrow \Type\,(-;\ty_k c_0), \\
                \src  & : \Cell\,(-;c_0) \Rightarrow \Cell\,(-;\src c_0),  \\
                \tgt  & : \Cell\,(-;c_0) \Rightarrow \Cell\,(-;\tgt c_0).
        \end{align*}

        To prove the theorem, it suffices to show that the functors of cells of shape $c_0$ and spheres of shape $A_0$ are representable. Hence, we will construct by induction on $n\le \omega$, for every finite $k\le n$
        \begin{enumerate}
                \item for all $c_0\in \mathfrak{C}_k$, an $n$-computad $J_n^{\mathfrak{C}}(c_0)$ and a canonical cell $\self_n(c_0)$ of shape $c_0$ of it such that evaluation at the canonical cell induces an isomorphism
                      \[ \Comp_n^{\var}(J_n^{\mathfrak{C}}(c_0),-) \Rightarrow \Cell\,(-;c_0)   \]
                \item for all $A_0\in \mathfrak{T}_k$, an $n$-computad $J_n^{\mathfrak{T}}(A_0)$ and a canonical sphere $\self_n(A_0)$ of shape $A_0$ of it such that evaluation at the canonical sphere induces an isomorphism
                      \[ \Comp_n^{\var}(J_n^{\mathfrak{T}}(A_0),-) \Rightarrow \Type(-;A_0)  \]
                \item a pair of generator-preserving $n$-homomorphisms for all $A_0 \in \mathfrak{T}_k$,
                      \[
                              J_{n,A_0,i} : J_{n}^{\mathfrak{C}}(\pr_i A_0) \to J_n^{\mathfrak{T}}(A_0)
                      \]
                      such that
                      \[
                              \Cell\,(J_{n,A,i})(\self_n \pr_i A_0) = \pr_i (\self_n A_0)
                      \]
                \item for $k>0$ and all $c_0 \in \mathfrak{T}_k$ generator-preserving $n$-homomorphisms
                      \begin{align*}
                              J_{n,\partial c_0} : J_{n}^{\mathfrak{T}}(\ty_k c_0) \to J_n^{\mathfrak{C}}(c_0) \\
                              J_{n, c_0, \src} : J_{n}^{\mathfrak{C}}(\src c_0) \to J_n^{\mathfrak{C}}(c_0)    \\
                              J_{n, c_0, \tgt} : J_{n}^{\mathfrak{C}}(\tgt c_0) \to J_n^{\mathfrak{C}}(c_0)
                      \end{align*}
                      such that
                      \begin{align*}
                              \Type\,(J_{n,\partial c_0})(\self_n (\ty_k c_0)) = \ty_k (\self_n c_0) \\
                              \Cell\,(J_{n, c_0, \src})(\self_n (\src c_0)) = \src (\self_n c_0)     \\
                              \Cell\,(J_{n, c_0, \tgt})(\self_n (\tgt c_0)) = \tgt (\self_n c_0)
                      \end{align*}
        \end{enumerate}
        and we will show that
        \begin{enumerate}
                \item for $k>0$ and $A_0 = (A_0',a,b)$ the following square commutes
                      \[
                              \begin{tikzcd}
                                      J^{\mathfrak{T}}_{n}(A_0')
                                      \ar[r, "J_{n,\partial a}"]
                                      \ar[d, "J_{n,\partial b}" left]
                                      &
                                      J^{\mathfrak{C}}_n(a)
                                      \ar[d, "J_{n,A_0,1}"]
                                      \\
                                      J^{\mathfrak{C}}_n(b)
                                      \ar[r, "J_{n,A_0,2}" below]
                                      &
                                      J^{\mathfrak{T}}_n(A_0)
                              \end{tikzcd}
                      \]
                \item for $k>0$ and $c_0\in \mathfrak{C}_k$ the $n$-homomorphisms associated to the source and target natural transformations are determined by
                      \begin{align*}
                              J_{n,c_0,\src} & = J_{n, \ty_k c_0, 1}J_{n,\partial_{c_0}} \\
                              J_{n,c_0,\tgt} & = J_{n, \ty_k c_0, 2}J_{n,\partial_{c_0}} \\
                      \end{align*}
                      and hence they satisfy for $k>2$ the globularity conditions
                      \begin{align*}
                              J_{n,c_0,\src}J_{n,\src c_0, \src} & = J_{n,c_0,\tgt}J_{n,\tgt c_0, \src}  \\
                              J_{n,c_0,\src}J_{n,\src c_0, \tgt} & = J_{n,c_0,\tgt}J_{n,\tgt c_0, \tgt}.
                      \end{align*}
                \item for $k<n<\omega$ and every $c_0\in \mathfrak{C}_k$ and $A_0 \in \mathfrak{T}_k$, the following compatibility conditions hold
                      \begin{align*}
                              J_{n}^\mathfrak{C}(c_0) & = \sk_n J_{n-1}^\mathfrak{C}(c_0), & \self_n(c_0) = \self_{n-1}(c_0)  \\
                              J_{n}^\mathfrak{T}(A_0) & = \sk_n J_{n-1}^\mathfrak{T}(A_0). & \self_n(A_0) = \self_{n-1}(A_0),
                      \end{align*}
                      while for $n = \omega$,
                      \begin{align*}
                              J^\mathfrak{C}(c_0) & = \Sk_k J_{k}^\mathfrak{C}(c_0), & \self\,(c_0) = \self_k(c_0)  \\
                              J^\mathfrak{T}(A_0) & = \Sk_k J_{k}^\mathfrak{T}(A_0). & \self\,(A_0) = \self_k(A_0).
                      \end{align*}
        \end{enumerate}

        \noindent It is worth noting that if representability of $\Cell\,(-;c_0)$ is established, as well as that of $\Cell\,(-;\src c_0)$, $\Cell\,(-;\tgt c_0)$ and $\Type(-;\ty_k c_0)$, then existence and uniqueness of $J_{n,\partial c_0}$, $J_{n,c_0,\src}$, $J_{n,c_0,\tgt}$ and $J_{n,\ty c_0, i}$ with the given properties is a consequence of the Yoneda Lemma. Their uniqueness implies automatically that the various equations between them hold, since corresponding equations hold for the natural transformations they define. In light of that, it suffices to define the representations of each functor using those homomorphisms wherever needed.

        \paragraph{Base case.}
        First suppose that $n = 0$. Then there is a unique $0$-cell $\star\ \in \mathfrak{C}_0$, and we define
        \[
                J^{\mathfrak{C}}_0(\star) = \top_0 = \{\star\} \in \Comp_0^{\var}
        \]
        We set
        \[
                \self_0 \star = \star.
        \]
        There is a unique $0$-spheres $(\star, \star)$ in $\mathfrak{T}_0$, and we define
        $J^{\mathfrak{T}}_0(\star, \star)$ to be the coproduct
        \[
                J^{\mathfrak{C}}_0(\star) + J^{\mathfrak{C}}_0(\star) = \{\star_0, \star_1\},
        \]
        and we set
        \[
                \self_0 {(\star, \star)} = (\star_0, \star_1).
        \]
        We define also $J_{0,\star,i}$ to be the function picking $\star_i$. The functor of cells of shape $\star$ and that of spheres of shape $(\star_0,\star_1)$ are represented by those sets, since they are isomorphic to the identity and the power $(-)^2$ respectively.

        \paragraph{Low dimensional cells and spheres.}
        Let now $n>0$ and suppose that the representations above have been defined for all $n'<n$. For $k<n$ and $c_0\in \mathfrak{C}_k$, define the computad $J_{n}^{\mathfrak{C}}(c_0)$ and the cell $\self_n(c_0)$ so that the compatibility condition is satisfied. For finite $n$, this represents the functor $\Cell\,(-;c_0)$ on $n$-computads as well, since for every $n$-computad $C$ the following square commutes
        \[\begin{tikzcd}[column sep = large]
                        {\Comp_n^{\var}(J_n^{\mathfrak{C}}(c_0),C)} & {\Cell\,(C;c_0)} \\
                        {\Comp_n^{\var}(J_{n-1}^{\mathfrak{C}}(c_0),u_nC)} & {\Cell\,(u_nC;c_0)}
                        \arrow["{\self_n(c_0)}", from=1-1, to=1-2]
                        \arrow[equals, from=1-2, to=2-2]
                        \arrow["{u_n^{\var}}"', "\sim", from=1-1, to=2-1]
                        \arrow["{\self_{n-1}(c_0)}"', "\sim", from=2-1, to=2-2]
                \end{tikzcd}\]
        For $n = \omega$, there is a similar commutative square involving $U_k^{\var}$, proving that the defined computad represents $\Cell\,(-;c_0)$ again. For $A_0 \in \mathcal{T}_k$, the computad and the spheres given by the compatibility condition represent $\Type(-;A_0)$ for the same reason. It is easy to see that the canonical $n$-homomorphisms between the representing computads are also obtained by applying $\sk^{\var}_n$ or $\Sk_k^{\var}$ to the corresponding $(n-1)$-homomorphisms or $k$-homomorphisms respectively.

        \paragraph{Top dimensional cells.}
        Suppose further that $n < \omega$. The computad $J_n^{\mathfrak{C}}(c_0)$ for $c_0\in \mathfrak{C}_n$ together with the canonical cell $\self_n(c_0)$ will be constructed by induction on $c_0$ and it will be shown that they represent $\Cell\,(-;c_0)$.

        If $c_0 = \var\, A_0$ for some $A_0\in V^{\top_{\omega}}_n = \mathfrak{T}_{n-1}$, then let
        \[ J_n^{\mathfrak{C}}(c_0) = (J_{n-1}^{\mathfrak{T}}(A_0), \{\star\}, \{(\star\mapsto \self_n A_0)\})\]
        and let
        \[\self_n(c_0) = \var\,\star.\]
        A generator-preserving morphism $\sigma : J_n^{\mathfrak{C}}(c_0)\to C$ amounts to a sphere $A$ of $C$ of shape $A_0$, and a generator $v\in V_n^C$ of spheres $A$.
        A cell $c\in \Cell_n(C)$ is of shape $c_0$ exactly when it is of the form $\var v'$ for some generator such that
        \[!_V(v') = \var\,(\Type(!)\ty_{n,C}(\var\, v')) = A_0,\]
        so they are in bijection to pairs $(v,A)$ as above. Therefore, $J_n^{\mathfrak{C}}(c_0)$ and $\self_n(c_0)$ represent the functor of cells of shape $c_0$.

        Suppose now that $c_0 = \coh(B,A, \tau)$ for some Batanin tree $B$, full $(n{-}1)$\-sphere $A$ and morphism $\tau: \Free_n B\to \top_n$ and that for all $c_0'\in \mathfrak{C}_k$ of depth less than that of $c_0$, the representation of $\Cell(-;c_0')$ has been defined. Let $\el^*(\mathfrak{C})$ the full subcategory of the category of elements of $\mathfrak{C}$ consisting of cells of dimension at most $n-1$, and cells of dimension $n$ and depth less than that of $c_0$. Then the representing objects assemble into a functor
        \[J_n^{\mathfrak{C}} : \el^*(\mathfrak{C}) \to \Comp_n^{\var} \]
        together with the $n$-homomorphisms $J_{n,c_0',\src}$ and $J_{n,c_0',\tgt}$.

        The Batanin tree $B$ must have dimension at most $n$, so transposing $\tau$ along $\Sk_{n+1} \vdash U_{n+1}$ and $\Free \vdash \Cell$, we get a unique morphism of globular sets
        \[\tau^\dagger : B \to \mathfrak{C}\]
        such that for all $k\le n$ and $k$-position $p$ of $B$,
        \[\tau^{\dagger}(p) = \tau_{k,V}(p).\]
        The image of the functor that $\tau^\dagger$ induces between the categories of elements is contained in $\el^*(\mathfrak{C})$, so we may define
        \[J_{n}^{\mathfrak{C}}(c_0) = \colim (J_n^{\mathfrak{C}} \circ \el\,\tau^\dagger)\]
        and define for every position $p$ of $B$, $n$-homomorphisms
        \[\psi_p  : J_n^{\mathfrak{C}}(\tau^\dagger\,p) \to J_n^{\mathfrak{C}}(c_0) \]
        forming a colimit cocone. The assignment
        \[\sigma : B \to \Cell(\Sk_nJ_n^{\mathfrak{C}}(c_0)) \]
        given by sending $p$ to
        \[\sigma(p) = \Cell(\Sk_n\psi_p)(\self_n \tau^\dagger p) \]
        is easily checked to be a morphism of globular sets.
        Transposing $\sigma$ as above, we get an $n$-homomorphism
        \[\sigma^\dagger : \Free_n B \to J_n^{\mathfrak{C}}(c_0)\]
        and we let
        \[ \self_{n}(c_0) = \coh(B,A,\sigma^\dagger). \]

        It remains to show that this pair represents $\Cell(-;c_0)$. Generator-preserving $n$-homomorphisms
        \[\chi : J_{n+1}^{\mathfrak{C}}(c_0)\to C\]
        are in bijection to families of generator-preserving $n$-homomorphisms indexed by positions $p$ of $B$
        \[ \chi_p : J_{n+1}(\tau^\dagger p) \to C \]
        satisfying the boundary conditions
        \begin{align*}
                \chi_p\, J_{n,\tau^\dagger p, \src} & = \chi_{\src p}, \\
                \chi_p\, J_{n,\tau^\dagger p, \tgt} & = \chi_{\tgt p}.
        \end{align*}
        Such families are in turn in bijection to families of cells $d_p \in \Cell\,(\Sk_n C)$ such that
        \[\Cell(!)(d_p) = \tau^\dagger\, p\]
        and they satisfy the boundary conditions
        \begin{align*}
                \src\,d_p & = d_{\src p}, \\
                \tgt\,d_p & = d_{\tgt p}.
        \end{align*}
        Equivalently, they are in bijection with morphisms of globular sets
        \[ d : B \to \Cell(\Sk_n C) \]
        such that
        \[\Cell(!)\circ d = \tau^\dagger.\]
        Transposing as above, we see that such morphisms correspond to $n$-homomorphisms
        \[d^\dagger : \Free_n B \to C \]
        such that
        \[\tau = {!} \circ d^\dagger.\]
        Finally, such morphisms correspond to cells of $C$ of shape $c_0$, since any such cell must be of the form $\coh(B,A,d^\dagger)$ for such a morphism.

        To establish that this bijection is the one given by evaluating at $\self_n(c_0)$, we need to show that for all pairs $\chi$ and $d^\dagger$ related by this bijection,
        \[\coh(B,A,d^\dagger) = \Cell(\chi)(\self_n c_0) = \coh(B,A, \chi\sigma^\dagger),\]
        that is $d^\dagger = \chi \sigma^\dagger$. Transposing this equality, it is equivalent to showing that
        \[ d = \Cell(\chi) \sigma : B \to \Cell(C). \]
        For every position $p$,
        \begin{align*}
                d_p & = \Cell(\chi_p)(\self_n \tau^\dagger\, p)            \\
                    & = \Cell(\chi)\Cell(\psi_p)(\self_n \tau^\dagger\, p) \\
                    & = \Cell(\chi)\sigma(p),
        \end{align*}
        so the final equality holds and hence the previous ones as well.

        \paragraph{Top-dimensional spheres.}
        Suppose that we have a sphere $A_0 = (A_0', a, b) \in \mathfrak{T}_n$. Then we define $J^{\mathfrak{T}}_n(A_0)$ and the generator-preserving morphisms $J_{n,A_0,i}$ by the following pushout diagram
        \[
                \begin{tikzcd}
                        J^{\mathfrak{T}}_{n}(A_0')
                        \ar[r, "J_{n,\partial a}"]
                        \ar[d, "J_{n,\partial b}" left]
                        &
                        J^{\mathfrak{C}}_n(a)
                        \ar[d, "J_{n,A_0,1}"]
                        \\
                        J^{\mathfrak{C}}_n(b)
                        \ar[r, "J_{n,A_0,2}" below]
                        &
                        J^{\mathfrak{T}}_n(A_0)
                \end{tikzcd}
        \]
        By construction this square commutes as required. We define the sphere $\self_n(A_0)$ to be the one given by the parallel cells $\Type_n(\Sk_nJ_{n,A_0,i})(\self_n(\pr_i A_0))$. For every $n$-computad $C$, the set of spheres of shape $A_0$ is the pullback of the sets of cells of shapes $a$ and $b$ respectively over the set of spheres of shape $A_0'$. From that, it follows easily that $J^{\mathfrak{T}}_n(A_0)$ with the sphere $\self_n(A_0)$ represent the functor of spheres of shape $A_0$.
\end{proof}

\begin{corollary}\label{cor-compn-preserves-limits}
        The functor $\Type_n \zeta_n : \Comp_n^{\var} \to \Set$ preserves connected limits.
\end{corollary}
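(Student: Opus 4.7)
The plan is to deduce this directly from the familial representability established in Theorem~\ref{lem-Cell-is-familially-representable}. Restricting the isomorphism to finite $n$ gives a natural isomorphism
\[
        \Type_n\zeta_n(C) \;\cong\; \coprod_{A_0 \in \mathfrak{T}_n} \Comp^{\var}_n(J^{\mathfrak{T}}_n(A_0), C),
\]
so $\Type_n\zeta_n$ is a coproduct, indexed by $\mathfrak{T}_n$, of representable functors $\Comp^{\var}_n \to \Set$.

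First I would invoke the standard fact that each $\Comp^{\var}_n(J^{\mathfrak{T}}_n(A_0), -)$, being representable, preserves all limits that exist in $\Comp^{\var}_n$; by Proposition~\ref{prop-comp-cocomplete} (or by direct inspection via $\cosk^{\var}_n$) this category admits all small limits, so in particular every connected limit of a diagram $D : \mathcal{I} \to \Comp^{\var}_n$ is preserved by each representable. Second, I would appeal to the fact that connected limits commute with arbitrary coproducts in $\Set$: for any connected category $\mathcal{I}$, any set $S$, and any family of functors $F_s : \mathcal{I} \to \Set$ indexed by $s \in S$, the canonical map
\[
        \coprod_{s \in S} \lim_{\mathcal{I}} F_s \;\longrightarrow\; \lim_{\mathcal{I}} \coprod_{s \in S} F_s
\]
is a bijection, since an element of the right-hand side is a compatible family of elements each of which lives in some coproduct summand, and connectedness of $\mathcal{I}$ forces all such summand indices to coincide.

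Combining these two facts, for any connected diagram $D : \mathcal{I} \to \Comp^{\var}_n$,
\begin{align*}
        \Type_n\zeta_n(\lim D)
         & \cong \coprod_{A_0 \in \mathfrak{T}_n} \Comp^{\var}_n(J^{\mathfrak{T}}_n(A_0), \lim D) \\
         & \cong \coprod_{A_0 \in \mathfrak{T}_n} \lim_{\mathcal{I}} \Comp^{\var}_n(J^{\mathfrak{T}}_n(A_0), D(-)) \\
         & \cong \lim_{\mathcal{I}} \coprod_{A_0 \in \mathfrak{T}_n} \Comp^{\var}_n(J^{\mathfrak{T}}_n(A_0), D(-)) \\
         & \cong \lim_{\mathcal{I}} \Type_n\zeta_n(D(-)),
\end{align*}
and one checks that this composite isomorphism is the canonical comparison map. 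The only step that requires any genuine care is the verification that coproducts commute with connected limits in $\Set$; this is a classical lemma, and the connectedness hypothesis is used precisely to guarantee that a compatible family has a well-defined ``summand index.'' I therefore expect no serious obstacle; the argument is essentially formal given Theorem~\ref{lem-Cell-is-familially-representable}.
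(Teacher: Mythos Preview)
Your argument is essentially the paper's own proof: familial representability from Theorem~\ref{lem-Cell-is-familially-representable} plus the fact that coproducts in $\Set$ commute with connected limits. One small correction: you cite Proposition~\ref{prop-comp-cocomplete} for the existence of limits in $\Comp_n^{\var}$, but that proposition establishes \emph{co}completeness; completeness is Corollary~\ref{cor:compn-complete}, which in the paper is proved \emph{using} the present corollary, so invoking it here would be circular. Fortunately this appeal is unnecessary: representable functors preserve whatever limits happen to exist, so your chain of isomorphisms goes through for any connected diagram possessing a limit, without needing to know in advance that all connected limits exist.
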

\begin{proof}
        This functor is familially representable by the previous proof, so this follows immediately, since coproducts in $\Set$ commute with connected limits.
\end{proof}

\begin{corollary}\label{cor:compn-complete}
        For each $n$, the category $\Comp_n^{\var}$ is complete. Furthermore, the inclusion $\zeta_n : \Comp_n^{\var} \to \Comp_n$ preserves all connected limits.
\end{corollary}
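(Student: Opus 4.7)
The proof proceeds by induction on $n$. The base case $n=0$ is immediate, since $\Comp_0^{\var}=\Set$ and $\zeta_0$ is the identity, and the case $n=\omega$ reduces to pointwise constructions along the tower of forgetful functors. For the finite inductive step, completeness decomposes into existence of a terminal object (already established in the preceding lemma) and existence of connected limits, so it suffices to construct connected limits in $\Comp_n^{\var}$ and verify that $\zeta_n$ preserves them.

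Given a connected diagram $J:\mathcal{D}\to\Comp_n^{\var}$, the candidate limit $L=(L_{n-1},V_n^L,\phi_n^L)$ is defined by taking $L_{n-1}$ to be the limit of $u_n^{\var}\circ J$ in $\Comp_{n-1}^{\var}$, which is simultaneously a limit in $\Comp_{n-1}$ by the inductive hypothesis, setting $V_n^L:=\lim_A V_n^{JA}$ in $\Set$, and defining $\phi_n^L:V_n^L\to\Type_{n-1}(L_{n-1})$ via the cone with $A$-component $V_n^L\xrightarrow{\mu_A}V_n^{JA}\xrightarrow{\phi_n^{JA}}\Type_{n-1}((JA)_{n-1})$, using Corollary~\ref{cor-compn-preserves-limits} to identify $\Type_{n-1}(L_{n-1})$ with $\lim_A\Type_{n-1}((JA)_{n-1})$. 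The projections $\pi_A=(\rho_A,\var\circ\mu_A):L\to JA$ are manifestly generator-preserving, and the universal property of $L$ with respect to generator-preserving cones follows directly from the defining limit constructions.

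The main obstacle is verifying the universal property of $L$ in the larger category $\Comp_n$. Given a cone $\sigma_A:X\to JA$ in $\Comp_n$ whose components are not assumed generator-preserving, the inductive hypothesis produces a unique factor $\phi_{n-1}:X_{n-1}\to L_{n-1}$. For each generator $v\in V_n^X$, the cells $\sigma_{A,V}(v)\in\Cell_n(JA)$ form a compatible family under transition maps that coincide with those of the functor $\Cell_n^{\var}\circ J$, since each $J(f)$ is generator-preserving. The familial representability of $\Cell_n^{\var}$ from Theorem~\ref{lem-Cell-is-familially-representable} implies that generator-preserving morphisms preserve the shape of a cell, so connectedness of $\mathcal{D}$ forces all $\sigma_{A,V}(v)$ to share a common shape $c(v)\in\mathfrak{C}_n$. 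The associated classifying morphisms $\alpha_A(v):J^{\mathfrak{C}}_n(c(v))\to JA$ assemble into a generator-preserving cone, which factors uniquely through $L$ by its generator-preserving universal property, yielding a cell $\phi_V(v)\in\Cell_n(L)$ whose image under each $\Cell_n(\pi_A)$ is $\sigma_{A,V}(v)$.

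The boundary compatibility $\ty(\phi_V(v))=\Type_{n-1}(\phi_{n-1})(\phi_n^X(v))$ holds because both sides project to $\Type_{n-1}(\sigma_{A,n-1})(\phi_n^X(v))$ under every $\Type_{n-1}(\rho_A)$, and $\Type_{n-1}\zeta_{n-1}$ preserves connected limits by Corollary~\ref{cor-compn-preserves-limits}; uniqueness of the factor reduces to the same familial-representability argument. Finally, Remark~\ref{generator-preserving-two-out-of-three} ensures that when the original cone is generator-preserving, the induced factor is too, so $L$ is simultaneously a connected limit in $\Comp_n^{\var}$ and in $\Comp_n$.
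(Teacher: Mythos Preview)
Your proof is correct and follows essentially the same approach as the paper's: both construct the limit computad $L$ with the same underlying data (limit of the $(n-1)$-truncations, limit of the generator sets, boundary map via Corollary~\ref{cor-compn-preserves-limits}), and both reduce the $\omega$-case to pointwise computation. The paper's proof simply asserts that the universal property ``in both $\Comp_n$ and $\Comp_n^{\var}$ is now easily verified'', whereas you spell out the verification in $\Comp_n$ explicitly via the familial representability of $\Cell_n^{\var}$ from Theorem~\ref{lem-Cell-is-familially-representable}; your shape argument is precisely what underlies the fact that $\Cell_n\zeta_n$ preserves connected limits, which is the implicit content of the paper's appeal to Corollary~\ref{cor-compn-preserves-limits}.
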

\begin{proof}
        Since $\Comp_n^{\var}$ has a terminal object, it suffices to show it has connected limits.
        Corollary \ref{cor-compn-preserves-limits} now allows us to proceed as we did for colimits.
        We prove the statement by induction on $n$.
        When $n = 0$, we have that  $\Comp_n^{\var} = \Comp_n = \Set$ and so the statement holds trivially. Suppose that $0 < n < \omega$. Given a connected diagram $J : \mathcal{D} \to \Comp_n^{\var}$, we set:
        \begin{align*}
                (\lim\, J)_{n-1} = \lim_{A \in D}{(J A)_{n-1}}
                \\
                V^{\lim\, J}_n = \lim_{A \in D} V^{J A}_n
        \end{align*}
        and we define $\phi^{\lim\,J}_n : V^{\lim J}_n \to \Type_{n-1}(\lim J)$ to be the following canonical morphism:
        \[
                V_n^{\lim\,J} = \lim V^J_n
                \longrightarrow
                \lim \Type_{n-1} J
                \cong
                \Type_{n-1}(\lim J)
        \]
        The universal property of this computad in both $\Comp_n$ and $\Comp_n^{\var}$ is now easily verified. When $n = \omega$, we define connected limits pointwise.
\end{proof}

\begin{corollary}\label{cor:mono-epi-comp-var}
        For all $n$, the inclusion $\zeta_n$ preserves and reflects monomorphisms and epimorphisms.
\end{corollary}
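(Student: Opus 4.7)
The plan is to combine the standard pullback and pushout characterisations of monomorphisms and epimorphisms with the preservation results of Proposition~\ref{prop-comp-cocomplete} and Corollary~\ref{cor:compn-complete}. Recall that in any category, a morphism $f : A \to A'$ is a monomorphism exactly when $(A, \id_A, \id_A)$ is a pullback cone for the cospan $A \xrightarrow{f} A' \xleftarrow{f} A$, and dually $f$ is an epimorphism exactly when $(A', \id_{A'}, \id_{A'})$ is a pushout cocone for the span $A' \xleftarrow{f} A \xrightarrow{f} A'$. These characterisations reduce questions about monos and epis to questions about (co)limits.

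For preservation, suppose first that $f$ is monic in $\Comp_n^{\var}$, so that $(A,\id_A,\id_A)$ is a pullback of the cospan $(f,f)$ in $\Comp_n^{\var}$. By Corollary~\ref{cor:compn-complete}, $\zeta_n$ preserves connected limits, and a pullback is a connected limit, so $(\zeta_n A, \id, \id)$ is a pullback of $(\zeta_n f, \zeta_n f)$ in $\Comp_n$; hence $\zeta_n(f)$ is monic. The argument that $\zeta_n$ preserves epimorphisms is dual: Proposition~\ref{prop-comp-cocomplete} gives that $\zeta_n$ preserves all colimits, hence all pushouts, and the pushout characterisation yields the claim.

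For reflection, we exploit that $\zeta_n$ is a subcategory inclusion, in particular faithful and acting as the identity on morphisms. Suppose $\zeta_n(f)$ is monic in $\Comp_n$, and let $g,h : X \to A$ be generator-preserving morphisms with $f \circ g = f \circ h$. Applying $\zeta_n$ and using monoicity of $\zeta_n(f)$ in $\Comp_n$ gives $\zeta_n(g) = \zeta_n(h)$, whence $g = h$ by faithfulness, so $f$ is monic in $\Comp_n^{\var}$. The reflection of epimorphisms is dual. No step here presents a genuine obstacle, as the substantial content has already been loaded into the preservation theorems for limits and colimits; the corollary is essentially a formal consequence of those results together with the (co)pullback characterisations of (co)monomorphisms.
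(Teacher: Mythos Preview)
Your proof is correct and follows essentially the same approach as the paper: both use the pullback/pushout characterisation of monomorphisms and epimorphisms, invoke the preservation of connected limits and colimits by $\zeta_n$ for preservation, and faithfulness of $\zeta_n$ for reflection. The paper's version is simply more terse.
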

\begin{proof}
        Monomorphisms and epimorphisms can be characterised in terms of pullbacks and pushouts respectively. Since $\zeta$ preserves connected limits and colimits, it must preserve monomorphisms and epimorphisms. Moreover, it must reflect them, since it is faithful.
\end{proof}

\begin{theorem}
        The category $\Comp_n^{\var}$ is equivalent to the category of presheaves on $\mathcal{V}_n$, where $\mathcal{V}_n$ is its full subcategory on the $n$-computads of the form $J_n^{\mathcal{C}}(\var\, A_0)$ for some $-1\le k <n$ and $A_0\in \mathfrak{T}_k$.
\end{theorem}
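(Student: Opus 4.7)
The plan is to exhibit the equivalence as the nerve functor $N : \Comp_n^{\var} \to [\mathcal{V}_n^{\op}, \Set]$ sending $C$ to $V \mapsto \Comp_n^{\var}(V, C)$. I would verify the hypotheses of the standard recognition theorem for presheaf categories: a cocomplete category is equivalent to the presheaf topos on a small full subcategory $\mathcal{V}$ as soon as $\mathcal{V}$ is dense and consists of \emph{tiny} objects, i.e.\ objects whose hom-functor preserves all small colimits. Cocompleteness of $\Comp_n^{\var}$ was established in Proposition~\ref{prop-comp-cocomplete}, so two things remain.

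First I would show that every $V = J_n^{\mathfrak{C}}(\var A_0) \in \mathcal{V}_n$ is tiny. By Theorem~\ref{lem-Cell-is-familially-representable}, the functor $\Comp_n^{\var}(V, -)$ is naturally isomorphic to the functor picking out generators of dimension $k+1$ whose boundary sphere has shape $A_0 \in \mathfrak{T}_k$. The explicit construction of colimits in Proposition~\ref{prop-comp-cocomplete} computes the set of $(k+1)$-dimensional generators dimensionwise as $V_{k+1}^{\colim J} = \colim V_{k+1}^{J}$. Because the shape of a sphere is an invariant under generator-preserving homomorphisms (as it is defined via the unique morphism to $\top_n$), the decomposition of $V_{k+1}$ by shape commutes with colimits, so $\Comp_n^{\var}(V, -)$ preserves them all.

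Second I would show that $\mathcal{V}_n$ is dense, i.e.\ that for every computad $C$ the canonical cocone on the diagram $\mathcal{V}_n / C \to \Comp_n^{\var}$ is colimiting. A cocone on this diagram with apex $D$ assigns to each $(V, f : V \to C)$ a morphism $g_{V,f} : V \to D$, naturally in $(V,f)$. Maps $J_n^{\mathfrak{C}}(\var A_0) \to C$ are in bijection with generators of $C$ (decomposed by the shape of their boundary), and the morphisms within $\mathcal{V}_n$ encode precisely the way an iterated source or target of one generator appears as another generator in a lower dimension. A cocone therefore encodes exactly a generator-by-generator assignment of cells of $D$ compatible with all boundary data, which by the inductive definition of $n$-homomorphism is the same datum as a single morphism $C \to D$. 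For $n = \omega$ this propagates along the limit structure defining $\Comp^{\var}$.

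The main obstacle I expect is the density argument: making the bijection between cocones and homomorphisms precise requires a careful induction on dimension, matching the naturality constraints (transitions along morphisms in $\mathcal{V}_n$) to the commutative square appearing in the definition of an $n$-homomorphism. Here the explicit familial representing objects built in the proof of Theorem~\ref{lem-Cell-is-familially-representable}, together with the formulas $J_{n,c_0,\src}$ and $J_{n,c_0,\tgt}$, do the bookkeeping. Once density and tininess are in place, the recognition theorem yields the claimed equivalence.
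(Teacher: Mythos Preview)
Your approach is correct and close in spirit to the paper's, but organised differently. The paper does not verify density directly; instead it observes that the ``all generators'' functor $V^\bullet = \coprod_{k} V_k^{(-)} : \Comp_n^{\var} \to \Set$ is cocontinuous, reflects isomorphisms, and (by Theorem~\ref{lem-Cell-is-familially-representable}) decomposes as a coproduct of the representables $\Comp_n^{\var}(J_n^{\mathfrak{C}}(\var A_0),-)$, and then invokes Makkai's recognition criterion~\cite[Proposition~2]{makkai_word_2005} to conclude. Unpacked, Makkai's criterion is essentially the variant of your recognition theorem where density is replaced by the weaker condition that $\mathcal{V}_n$ is a \emph{strong generator} (equivalently, that the nerve is conservative): once the left adjoint $L$ to the nerve $N$ is fully faithful (your tininess argument gives exactly this, via $NL\cong\id$), conservativity of $N$ forces the counit $LN\Rightarrow\id$ to be invertible, hence $N$ is an equivalence. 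Conservativity here is just the statement that a generator-preserving homomorphism bijective on generators is an isomorphism, which is immediate.

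Your direct density argument does go through, but it is strictly more work: the inductive bookkeeping you flag as the ``main obstacle'' is real, since the naturality of a cocone over $\mathcal{V}_n/C$ must be translated into the commuting-square condition in the definition of an $n$-homomorphism, and this requires knowing that the lower-dimensional generators of each $J_n^{\mathfrak{C}}(\var A_0)$ are themselves classified by maps from objects of $\mathcal{V}_n$ (which in turn rests on the recursive pushout description of $J_n^{\mathfrak{T}}(A_0)$ in the proof of Theorem~\ref{lem-Cell-is-familially-representable}). One small imprecision: a cocone assigns to each generator of $C$ a \emph{generator} of $D$ (not merely a cell), since maps $J_n^{\mathfrak{C}}(\var A_0)\to D$ in $\Comp_n^{\var}$ classify cells of shape $\var A_0$, which are precisely generators. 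If you want to keep your argument self-contained, the cleanest fix is to replace the density check by the much easier conservativity check just described.
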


\begin{proof}
        The category $\Comp_n^{\var}$ together with the functor
        \[V^\bullet = \coprod_{0\le k\le n} V_k^\bullet : \Comp_n^{\var} \to \Set\]
        returning the generators of a computad is a concrete category in the sense of Makkai~\cite{makkai_word_2005}. By the construction of colimits in $\Comp_n^{\var}$, it follows that $V^\bullet$ is cocontinuous. Moreover, it is easy to see that it reflects isomorphisms, i.e. that a generator-preserving morphisms that is bijective on generators is invertible.

        By the proof of the previous theorem, we may further decompose $V^\bullet$ into a coproduct of representable functors by
        \begin{align*}
                V^C &= \coprod_{0\le k\le n}\coprod_{A_0\in \mathfrak{T}_{k-1}}\{v\in V_k^C \text{ whose boundary has shape } A_0 \} \\
                &\cong \coprod_{0\le k\le n}\coprod_{A\in \mathfrak{T}_{k-1}} \Comp_n^{\var}(J_n^{\mathcal{C}}(\var\, A_0),C).
        \end{align*}
        It follows that the category of elements of $V^\bullet$ is the disjoint union of the category of elements of those representable functors. Since the category of elements of a representable functor has an initial object, namely the identity of the representing object, the result follows~\cite[Proposition~2]{makkai_word_2005}.
\end{proof}

\section{Computadic resolutions}\label{sec:replacement}

Every set $I$ of morphisms in a locally presentable category $\mathcal{C}$ generates an algebraic weak factorisation system $(\mathbb{L},\mathbb{R})$ by Garner's algebraic small object argument \cite{garner_small_object}. Restricting  the functorial factorisations to the slice $\emptyset/\mathcal{C}\cong \mathcal{C}$ below the initial object gives a comonad $(Q,r,\Delta)$, that Garner calls the \emph{universal cofibrant replacement} of the weak factorisation system. This comonad may be identified by the following recognition principle.

\begin{theorem}\label{thm:recongnition}\cite[Proposition~2.6]{garner_homomorphisms_2010}
        For every object $X\in \mathcal{C}$, the morphism $r_X : QX\to X$ over $X$ may be equipped with such a choice of liftings $\phi(j,h,k)$ for every lifting problem
        \[\begin{tikzcd}
                        {\dom(j)} & QX \\
                        {\cod(j)} & X
                        \arrow["j"', from=1-1, to=2-1]
                        \arrow["{r_X}", from=1-2, to=2-2]
                        \arrow["k"', from=2-1, to=2-2]
                        \arrow["h", from=1-1, to=1-2]
                        \arrow["{\phi(j,h,k)}"{description}, dashed, from=2-1, to=1-2]
                \end{tikzcd}\]
        where $j\in I$, such that $(r_X,\phi)$ is initial among objects of $\mathcal{C}/X$ equipped with a choice of liftings.
\end{theorem}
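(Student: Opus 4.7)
This statement is precisely Proposition~2.6 of Garner~\cite{garner_homomorphisms_2010}, so the plan is to sketch why it holds rather than rederive the full machinery. The core strategy is to reformulate the data of a ``choice of liftings against $I$'' as an algebra structure for a pointed endofunctor, and then identify $(r_X, \phi)$ with the corresponding free algebra over $X$.

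First, I would observe that equipping a morphism $f : Y \to X$ in $\mathcal{C}/X$ with a choice of liftings $\phi$ against $I$ is equivalent to equipping $f$ with an algebra structure for a certain pointed endofunctor $R$ on $\mathcal{C}/X$. Concretely, $R(f)$ is obtained by gluing in, via a single pushout, the domains of all lifting problems $(j,h,k)$ with $j \in I$; the universal map out of this pushout gives the point $f \to R(f)$, and a retraction of this point over $X$ is precisely the same data as a coherent system of fillers $\phi(j,h,k)$.

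Second, I would invoke Garner's algebraic small object argument, which takes the pointed endofunctor $R$ and produces its free monad by a transfinite inductive construction that converges thanks to local presentability of $\mathcal{C}$. Applied to the initial object $0 \to X$ of $\mathcal{C}/X$, this yields the free $R$-algebra on $0 \to X$; its underlying morphism is $r_X : QX \to X$, and its algebra structure assembles, step by step during the transfinite construction, into the canonical choice of liftings $\phi$ claimed in the theorem.

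Finally, the universal property of the free algebra on $0 \to X$ gives exactly the initiality statement of the theorem: any other object of $\mathcal{C}/X$ equipped with a choice of liftings is an $R$-algebra, and so admits a unique $R$-algebra map from $(r_X, \phi)$. The main obstacle is not conceptual but technical: verifying that the algebraic small object argument converges and produces a genuinely \emph{algebraically} free structure, rather than merely a weak factorisation system. This is the content of \cite{garner_small_object}, and once that is invoked the remaining argument is essentially formal unwinding of universal properties.
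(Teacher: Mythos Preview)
Your proposal is correct in spirit, but there is nothing to compare against: the paper does not prove this theorem at all. It is stated with a citation to \cite[Proposition~2.6]{garner_homomorphisms_2010} and used as a black-box recognition principle; the paper's own work lies in verifying its hypotheses for the specific comonad $Q = K^w\zeta W$ constructed in Section~\ref{sec:replacement}. Your sketch is a faithful outline of Garner's original argument --- reformulating a choice of liftings as an algebra structure for a pointed endofunctor on $\mathcal{C}/X$, then invoking the algebraic small object argument to build the free such algebra on $0 \to X$ --- and would be appropriate if one wanted to include a proof rather than a citation.
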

\noindent
In particular, the set $I$ of inclusions of spheres as boundaries of disks ${\mathbb{S}^{n-1} \overset{\iota_n}{\rightarrowtail} \mathbb{D}^n}$ generates  a weak factorisation system on $\omega\Cat$. Our goal in this section is to explicitly describe the universal cofibrant replacement in this case using the language of computads.

We will first show that for each computad $C$, the weak $\omega$-category $K^w C$ is always cofibrant; that is the map $\emptyset \to K^w C$ is in the left class of the weak factorisation system generated by $I$. In fact, we will prove that this map is \emph{$I$-cellular}, i.e. in the closure of $I$ under coproducts, pushouts and transfinite composition. We will then build a right adjoint $W : \omega\Cat\to \Comp^{\var}$ to the free $\omega$-category functor $K^w\zeta : \Comp^{\var} \to \omega\Cat$ and show that the induced monad is the universal one, in analogy to \cite[Proposition~5.7]{garner_homomorphisms_2010}.

\subsection{Immersions are cellular}

We now define a simple class of morphisms of computads called \emph{immersions}. We will show that immersions are $I$-cellular when viewed as morphisms of $\omega$-categories, and conclude that free $\omega$-categories on computads are also $I$-cellular.

\begin{definition}
        An \emph{immersion} \mbox{$\sigma: C\to D$} is a generator-preserving homomorphism such that for all $n \in \N$ the assignment on generators $\sigma': V_n^C\to V_n^D$ defining $\sigma$ is injective.
\end{definition}

\noindent Using representability of $\Cell$ and Corollary \ref{cor:mono-epi-comp-var}, we can easily see that a generator-preserving homomorphism is an immersion if and only if it is monic in $\Comp^{\var}$ or equivalently in $\Comp$.

We first show that certain immersions are pushouts of coproducts of maps in $I$. We will then show that every immersion is a transfinite composite of these particular immersions; consequently, every immersion is $I$-cellular. Restricting these results to morphisms out of the initial computad, we will conclude that free $\omega$\-categories on a computad are $I$-cellular and that they satisfy a freeness condition analogous to the one posed by Street \cite[Section~4]{street_orientals} for strict $\omega$\-categories.

\begin{proposition}\label{prop:immersion-pushout}
        Let $\sigma : C \to D$ be an immersion and suppose that there exists some $n\in\N$ such that, for all $k \neq n$, we have that $V_k^C = V_k^D$ and $\sigma_{k,V} = \id$. Let $V = V_n^D\setminus \sigma_{n,V}(V_n^C)$.
        Then, the following square is a pushout square:
        \[\begin{tikzcd}
                        {\coprod_{e \in V} K^w\mathbb{S}^{n-1}} & {\coprod_{e \in V} K^w\mathbb{D}^n} \\
                        {K^wC} & {K^wD}
                        \arrow["{\iota_n}", from=1-1, to=1-2]
                        \arrow[from=1-2, to=2-2]
                        \arrow[from=1-1, to=2-1]
                        \arrow["{K^w\sigma}"', from=2-1, to=2-2]
                \end{tikzcd}\]
        Here the vertical maps classify the generator in $V$ and their spheres respectively.
\end{proposition}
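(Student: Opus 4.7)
The plan is to verify the universal property of the pushout directly in $\omega\Cat$; transporting a pushout from $\Comp$ is not automatically available since $K^w$ does not preserve colimits in general. A cocone with apex $X \in \omega\Cat$ decomposes, via the identifications $K^w\mathbb{D}^n = F^w D_n$ and $K^w\mathbb{S}^{n-1} = F^w S_{n-1}$ together with the adjunction $F^w \dashv U^w$, into a homomorphism $g : K^w C \to X$ and a family of $n$-cells $(x_e)_{e \in V}$ of $X$ whose boundary spheres coincide with the images under $g$ of the spheres $\phi^D_n(e) \in \Type_{n-1}(C_{n-1})$ (viewed as spheres of $K^w C$ via Corollary~\ref{cor:representability-of-cells}). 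The boundary condition is well-posed because $D_{n-1} = C_{n-1}$.

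From this data I would construct the unique extension $h : K^w D \to X$ by structural induction on cells of $D$, in the spirit of Proposition~\ref{morphisms-are-functors}. Since $\sigma_{k,V}=\id$ for $k\neq n$, the cells of $D$ of those dimensions are identified with cells of $C$, and $h$ is forced to agree with $g$ on them. On the $n$-dimensional generators, I would set $h(\var e) = g(\var \sigma_{n,V}^{-1} e)$ for $e \in \sigma_{n,V}(V_n^C)$ and $h(\var e) = x_e$ for $e \in V$. For a coherence cell $\coh(B, A, \tau) \in \Cell D$, the morphism $\tau$ has strictly lower depth than the cell, so by induction the composite $U^w h \circ \tau^\dagger : B \to U^w X$ is defined, where $\tau^\dagger : B \to \Cell D$ is the transpose of $\tau$; this composite transposes back to a homomorphism $\tilde\tau : F^w B \to X$, and $h(\coh(B, A, \tau))$ is then defined to be the image of the coherence datum $(B, A, \tilde\tau)$ under the $\fc^w$-algebra structure of $X$. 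Uniqueness of $h$ is automatic: any homomorphism satisfying the cocone conditions must agree with $g$ on cells inherited from $C$ and send $e\mapsto x_e$, and the recipe above then forces its value on every coherence cell.

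The main obstacle will be showing that the so-constructed $h$ is a genuine homomorphism of $\omega$-categories, i.e.\ that it intertwines the $\fc^w$-algebra structures on $\Cell D$ and $U^w X$. This reduces to checking that $h$ translates the coherence constructor in $\Cell D$ into the corresponding operation in $X$, which holds by construction provided that compatibility with boundaries is propagated through the induction --- and this propagation is exactly what the boundary condition on the $x_e$ is designed to ensure. The argument is a systematic generalization of that in Proposition~\ref{morphisms-are-functors}, with the extra freedom in the target absorbed by using the algebra structure of $X$ in place of the coherence constructor of a target computad; the bookkeeping is made rigorous by induction on the depth invariants of Section~\ref{sec:well-foundedness}.
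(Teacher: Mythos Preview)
Your proposal is correct and follows essentially the same route as the paper: reformulate the pushout as an extension problem for $g : K^wC\to X$ together with cells $(x_e)_{e\in V}$ satisfying the boundary condition, define the extension $h$ on generators and then on coherence cells via the $\fc^w$-algebra structure of $X$, and verify by induction that $h$ is a genuine homomorphism of $\omega$-categories. One small imprecision to fix: for $k>n$ the \emph{cells} of $D$ are not all identified with cells of $C$ (coherence cells of high dimension can use the new $n$-generators through $\tau$); only the \emph{generators} agree, and it is your recursive clause for coherence cells that handles the rest --- but this is exactly what the paper does as well.
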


\begin{proof}
        The $\omega$-categories $\mathbb{D}^n$ and $\mathbb{S}^n$ represent $n$-cells and pairs of parallel $n$-cells in respectively. Hence, the proposition amounts to showing that, for each $\omega$-category $X$, each homomorphism $f : C \to X$, and each family of cells ${\{x_e \in X_n\}_{e \in V}}$ satisfying the boundary conditions,
        \begin{align*}
                \src x_e & = f\,(\src \var\, e), &
                \tgt x_e & = f\,(\tgt \var\, e),
        \end{align*}
        there exists a unique homomorphism $g : D\to X$ such that
        \begin{align}
                \label{pushout-in-detail}
                \tag{$\star$}
                f         & = g \circ \sigma, &
                g(\var e) & = x_e.
        \end{align}
        Let $h: \Cell D \to U^w X$ the underlying morphism of globular sets for some such $g$. We claim that $h$ satisfies the following conditions:
        \begin{enumerate}
                \item For each $k\in \N$, $v\in V_k^C$, we have that $h(\var\, v) = f(\var\, v)$. \label{condition1}

                \item For each $e\in V$, we have that $h(\var\, e) = x_e$. \label{condition2}

                \item Let $\alpha : \fc^w X\to X$ be the $\fc^w$-algebra structure of $X$. Then, for each coherence cell, we have that
                      \[
                              h(\coh\,(B,A,\tau)) = \alpha\,(\coh\,(B,A,\Free_{k+1}(h\tau^\dagger))).
                      \]
                      where $\tau^\dagger : B\to \Cell(D)$ is the morphism corresponding by adjointness to $\tau : \Free_{k+1} B \to D$.
                      \label{condition3}
        \end{enumerate}
        The first two conditions are easily verified. For \ref{condition3}, we observe that, since $g$ is a homomorphism, we have that
        \begin{align*}
                h (\coh(B, A, \tau))
                \
                 & = h (\coh(B, A, \varepsilon_{k+1, D} \circ \Free_{k+1} \tau^\dagger))
                \\
                 & = (h \circ \Cell \varepsilon_{D})(\coh(B, A, \Free_{k+1} \tau^\dagger))
                \\
                 & = (\alpha \circ \fc^w h)(\coh(B, A, \varepsilon_{k+1, D} \circ \tau^\dagger))
                \\
                 & = \alpha(\coh(B, A, \Free_{k+1} (h \circ \tau^\dagger)).
        \end{align*}
        These three conditions uniquely determine $g$. Consequently, it suffices to show that the map $h$, defined at each dimension by these conditions, is always a homomorphism of weak $\omega$-categories. This, in turn, is equivalent to the following statements holding for each dimension $k$:
        \begin{itemize}
                \item If $k > 0$, then for each $k$-cell $c$ of $D$, the map $h$ respects the source and target of $c$:
                      \begin{align*}
                              \src h(c) = h(\src c),
                               &  &
                              \tgt h(c) = h(\tgt c).
                      \end{align*}

                \item For each $k$-cell $c'$ of $\Free \Cell D$, we have that
                      \begin{align*}
                              (h \circ \Cell(\varepsilon_D))(c') = (\alpha \circ \fc^w h)(c').
                      \end{align*}
                      Here  $\varepsilon_D$ is the counit of the adjunction $\Cell \dashv \Free$.
        \end{itemize}
        We proceed by induction on $k$. In dimension $0$, this is straightforward. Hence, suppose the claim holds for some $k\in \N$. Suppose that $c$ is a $(k+1)$\-dimensional cell in $D$. If $c$ is a generator, then \ref{condition1} and \ref{condition2} imply that $h$ respects the source and target of $h$. Given a coherence $(k+1)$-cell $c = \coh\,(B,A,\tau)$, the inductive hypothesis implies that
        \begin{align*}
                \src\,h(c)
                 & = \src \alpha (\coh\,(B,A,\Free_{k+1}(h \tau^\dagger)))
                \\
                 & = \alpha \src  (\coh\,(B,A,\Free_{k+1}(h \tau^\dagger)))
                \\
                 & = (\alpha\circ \fc^w(h)) (\src\,\coh\,(B,A,\Free_{k+1}\tau^\dagger))        \\
                 & = (h\circ \Cell(\varepsilon_D)) (\src\,\coh\,(B,A,\Free_{k+1}\tau^\dagger)) \\
                 & = h (\src\,\coh\,(B,A,\varepsilon_{k+1,D}\Free_{k+1}\tau^\dagger))          \\
                 & = h(\src\,c).
        \end{align*}
        A similarly argument works for the target. Now suppose that $c'$ is a $(k+1)$-dimensional cell of $\Free \Cell D$. First suppose that $c' = \var v$. Then the unit laws of $\fc^w$ and $\alpha$ imply that
        \begin{align*}
                (h \circ \Cell(\varepsilon_D)) (\var v)
                = h(v)
                = \alpha(\var h(v))
                =(\alpha\circ \fc^w h)(\var v).
        \end{align*}
        Now suppose that $c' = \coh\,(B,A,\tau_0)$ is a coherence cell. Suppose inductively that $h$ satisfies the inductive hypothesis on cells in the image of the transpose $\tau_0^\dagger : B \to \fc^w \Cell D$. Then
        \begin{align*}
                (h\circ \Cell(\varepsilon_D)) (c')
                \
                 & = h(\coh(B, A, \epsilon_{k+1, D} \tau_0))
                \\
                 & = \alpha(\coh(B, A, \Free_{k+1}(h (\varepsilon_{k+1, D} \circ \tau_0)^\dagger)))
                \\
                 & = \alpha\circ \fc^w (h\circ (\varepsilon_D\tau_0)^\dagger)(\coh\,(B,A,\id))                                                      \\
                 & = \alpha\circ \fc^w (h\circ \Cell(\varepsilon_D)\tau_0^\dagger)(\coh\,(B,A,\id))                                                 \\
                 & \overset{\text{ind}}{=} \alpha\circ \fc^w(\alpha\circ \fc^wh\circ \tau_0^\dagger)(\coh\,(B,A,\id))                               \\
                 & = (\alpha\circ \fc^w\alpha) \circ (\fc^w\fc^w h) (\coh\,(B,A,\Free_{k+1}\tau_0^\dagger))                                         \\
                 & \overset{\text{alg}}{=} (\alpha\circ \Cell(\varepsilon_{\Free\,X})) \circ (\fc^w\fc^w h) (\coh\,(B,A,\Free_{k+1}\tau_0^\dagger)) \\
                 & \overset{\text{nat}}{=} \alpha\circ \Cell(\Free\,h\circ \varepsilon_{\fc^wD})(\coh\,(B,A,\Free_{k+1}\tau_0^\dagger))             \\
                 & = (\alpha\circ \fc^w h)(c').
        \end{align*}
        Thus, the map $h$ satisfies the conditions required of a homomorphism of weak $\omega$-categories.
\end{proof}

\begin{theorem}\label{thm:immersion-trans}
        Every immersion is the transfinite composite of maps satisfying the hypotheses of the previous theorem. Hence, immersions are $I$-cellular.
\end{theorem}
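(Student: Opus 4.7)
The strategy is to factor an arbitrary immersion $\sigma : C \to D$ as a countable chain of immersions, each of which only adjoins generators in a single dimension, and hence satisfies the hypothesis of Proposition~\ref{prop:immersion-pushout}. For each $n \in \N$ we construct an intermediate computad $C^{(n)}$ that is obtained from $C$ by filling in all the ``new'' generators of $D$ up to dimension $n-1$, but leaving the higher dimensions as in $C$. Concretely, using $\sigma$ to identify $V_k^C$ with its image in $V_k^D$, one sets
\[
  V_k^{C^{(n)}} = V_k^D \text{ for } k<n, \qquad V_k^{C^{(n)}} = \sigma_k(V_k^C) \text{ for } k \ge n,
\]
with boundary functions inherited from those of $D$. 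Thus $C^{(0)} \cong C$ and the inclusions $\sigma^{(n)} : C^{(n)} \to C^{(n+1)}$ are immersions which act as the identity on generators in every dimension except dimension $n$, where they realize the inclusion $\sigma_n(V_n^C) \hookrightarrow V_n^D$.

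The first step is to verify well-definedness: the boundary of a generator $v \in \sigma_k(V_k^C)$ in $D$ is the image under $\Type_{k-1}(\sigma_{k-1})$ of a sphere in $C_{k-1}$, and therefore involves only generators lying in $\sigma_j(V_j^C) \subseteq V_j^{C^{(n)}}$ for $j<k$. This ensures that the boundary maps restrict appropriately and that each $C^{(n)}$ is a genuine subcomputad of $D$. A routine check shows that $\sigma^{(n)}$ is an immersion satisfying the hypothesis of Proposition~\ref{prop:immersion-pushout}, so its image under $K^w$ is the pushout of a coproduct of the maps $\iota_n : \mathbb{S}^{n-1} \to \mathbb{D}^n$.

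Next, we observe that the colimit of the chain
\[
  C \cong C^{(0)} \xrightarrow{\sigma^{(0)}} C^{(1)} \xrightarrow{\sigma^{(1)}} C^{(2)} \to \cdots
\]
in $\Comp^{\var}$ is $D$, with the canonical map equal to $\sigma$; this is immediate from the construction of colimits in $\Comp^{\var}$ given in Proposition~\ref{prop-comp-cocomplete}, since $V_k^{C^{(n)}} = V_k^D$ whenever $n > k$. Applying $K^w$ and using that the free $\omega$\-category monad $\fc^w$ is finitary (so the underlying globular set of $K^w(D)$ is the $\omega$\-colimit of those of $K^w(C^{(n)})$), we conclude that $K^w(\sigma)$ is the transfinite composite in $\omega\Cat$ of the maps $K^w(\sigma^{(n)})$. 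Combined with Proposition~\ref{prop:immersion-pushout}, this exhibits $K^w(\sigma)$ as $I$-cellular.

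The main obstacle is book-keeping in the well-definedness check: one must verify carefully that the boundary datum transported from $C$ to $C^{(n)}$ agrees with the restriction of $\phi^D$, and that the maps $\sigma^{(n)}$ genuinely assemble into a colimit diagram whose apex is $D$. Once this is in place, the passage to $\omega\Cat$ via the finitarity of $\fc^w$ is straightforward, and the $I$-cellularity follows by closure of the left class under pushouts, coproducts, and transfinite composition.
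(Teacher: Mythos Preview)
Your proposal is correct and follows essentially the same approach as the paper: both filter the immersion by dimension, constructing intermediate computads that agree with $D$ below a given level and with $C$ above it, and then use finitarity of $\fc^w$ to deduce that the tower has colimit $K^wD$ in $\omega\Cat$. The only cosmetic difference is that the paper keeps the generator sets $V_k^C$ above the cutoff (with boundary maps transported forward along $\sigma$), whereas you work with the image subcomputads $\sigma_k(V_k^C)\subseteq V_k^D$ and inherit boundaries from $D$; since $\sigma$ is injective on generators these are isomorphic descriptions, though the paper's formulation sidesteps the subcomputad well-definedness check you flag as book-keeping.
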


\begin{proof}
        Let $\sigma : C\to D$ be an arbitrary morphism of computads. We will define, by induction, a tower of factorisations of $\sigma$,
        \[\begin{tikzcd}
                        C & {P^0} & {P^1} & \cdots \\
                        &&& D
                        \arrow["{\sigma^2}", from=1-3, to=1-4]
                        \arrow["{\sigma^1}", from=1-2, to=1-3]
                        \arrow["{\sigma^0}", from=1-1, to=1-2]
                        \arrow["\sigma"', bend right = 15, from=1-1, to=2-4]
                        \arrow["{\rho^0}"', from=1-2, to=2-4]
                        \arrow["{\rho^1}", from=1-3, to=2-4]
                \end{tikzcd}\]
        where $U_{n}\rho^n = \id$. Intuitively, the computad $P^n$ looks like $D$ up to dimension $n$ and like $C$ above that dimension. The morphism $\rho_n$ above dimension $n+1$ is given by $\sigma$. More precisely, the tower can be defined inductively starting from $P^{-1} = C$ and $\rho^{-1} = \sigma$. For the inductive step, define first
        \begin{align*}
                U_{n+1}P^{n+1}      & = D_{n+1},         &
                U_{n+1}\rho^{n+1}   & = \id,             &
                U_{n+1}\sigma^{n+1} & = U_{n+1}(\rho^n),
        \end{align*}
        and then, for $k \ge n+1$, define
        \begin{align*}
                U_{k+1}P^{n+1}       & = (U_k P^{n+1}, V_{k+1}^{P^n}, \phi^{P^{n+1}}_{k+1}),
                \\
                \phi^{P^{n+1}}_{k+1} & = \Type_k(U_k\sigma^{n+1}) \phi^{P^{n}}_{k+1},
                \\
                U_{k+1}\sigma^{n+1}  & = (U_k\sigma^{n+1}, v \mapsto \var\, v),              \\
                U_{k+1}\rho^{n+1}    & = (U_k\rho^{n+1}, \rho_{k+1,V}^n).\end{align*}
        It is easy to see inductively that the morphisms above are well-defined and make the diagram commute. Moreover, if $\sigma$ is an immersion, then the $\sigma^n$ satisfy the hypothesis of the previous proposition.

        To finish the theorem, it remains to show that the $\rho^n$ form a colimit cocone under the diagram of the $\sigma^n$ in $\omega\Cat$. Since the cells of $P^n$ of dimension at most $n$ agree with those of $D$, it is easy to see that this becomes a colimit cocone after applying $\Cell = U^w K^w$. The forgetful functor $U^w$ reflects filtered colimits, since the monad $\fc^w$ preserves them, so this is also a colimit of $\omega$-categories.
\end{proof}

\begin{corollary}\label{cor:univ-prop-of-computads}
        Let $C$ be a computad. The free $\omega$-category $K^wC$ is the colimit of its $n$-skeletons
        \[\begin{tikzcd}
                        {\emptyset = K^w\Sk_{-1}C_{-1}} & {K^w\Sk_0C_0} & {K^w\Sk_1C_1} & \cdots,
                        \arrow[tail, from=1-1, to=1-2]
                        \arrow[tail, from=1-2, to=1-3]
                        \arrow[tail, from=1-3, to=1-4]
                \end{tikzcd}\]
        which for every $n\in \N$ fit in a pushout square
        \[\begin{tikzcd}
                        {\coprod_{v\in V_n^C} K^w\mathbb{S}^{n-1}} & {\coprod_{v\in V_n^C} K^w\mathbb{D}^{n}} \\
                        {K^w\Sk_{n-1}C_{n-1}} & {K^w\Sk_nC_n}
                        \arrow["{\iota_n}", tail, from=1-1, to=1-2]
                        \arrow[from=1-1, to=2-1]
                        \arrow[tail, from=2-1, to=2-2]
                        \arrow[from=1-2, to=2-2]
                \end{tikzcd}\]
        In particular, $K^wC$ is $I$-cellular.
\end{corollary}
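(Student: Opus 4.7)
The plan is to apply Theorem~\ref{thm:immersion-trans} (and the explicit tower constructed in its proof) to the unique homomorphism $\iota : \emptyset \to C$ out of the initial computad. Since $\iota$ is vacuously an immersion, its image under $K^w$ is a transfinite composite of pushouts of coproducts of maps in $I$, which will immediately give the $I$-cellularity statement. The remaining work is to identify the intermediate terms of this tower with $\Sk_n C_n$ and the stepwise pushouts with the squares in the statement.

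First, I would unwind the inductive construction of the tower $\emptyset = P^{-1} \to P^0 \to P^1 \to \cdots$ appearing in the proof of Theorem~\ref{thm:immersion-trans} for the morphism $\iota$. Starting from $P^{-1} = \emptyset$, an easy induction shows that $P^n$ has $U_n P^n = C_n$ and no generators in dimensions $k > n$, so $P^n = \Sk_n C_n$ (using Section~\ref{sec:infinitecomputads}). Under this identification, the transition $\sigma^n : P^{n-1} \to P^n$ is the evident inclusion $\Sk_{n-1} C_{n-1} \hookrightarrow \Sk_n C_n$, which is an immersion and satisfies the hypothesis of Proposition~\ref{prop:immersion-pushout}: it is the identity on generators of every dimension $k \neq n$, and is the empty-to-$V_n^C$ inclusion in dimension $n$, so the set $V$ of new generators coincides with $V_n^C$.

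Next, I would apply Proposition~\ref{prop:immersion-pushout} directly to each $K^w \sigma^n$, which yields exactly the pushout square displayed in the statement, with vertical maps classifying the $n$\-dimensional generators and their spheres. Finally, the last part of the proof of Theorem~\ref{thm:immersion-trans} shows that $K^w C$ is the colimit of the $K^w P^n$ in $\omega\Cat$ (by reducing to colimits in $\Glob$ via $U^w$, which reflects filtered colimits since $\fc^w$ is finitary by the finitariness result just proven). Combining the two, $K^w C$ is obtained from the initial $\omega$\-category by a transfinite composite of pushouts of coproducts of maps in $I$, hence is $I$\-cellular.

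The only subtlety I foresee is the careful bookkeeping that identifies $P^n$ with $\Sk_n C_n$ and matches $\sigma^n$ with the canonical skeletal inclusion; all the substantive content (the pushout property of each step, and the fact that the transfinite composite computes $K^w C$) has already been extracted in Proposition~\ref{prop:immersion-pushout} and Theorem~\ref{thm:immersion-trans}, so nothing beyond a routine verification is required.
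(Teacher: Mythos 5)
Your proposal is correct and is exactly the paper's argument: the paper's proof consists of the single observation that the tower of skeletons of $C$ is the tower constructed in the proof of Theorem~\ref{thm:immersion-trans} for the immersion $\emptyset \to C$, from which the pushout squares (via Proposition~\ref{prop:immersion-pushout}) and the colimit statement follow. Your additional bookkeeping identifying $P^n$ with $\Sk_n C_n$ is just the routine verification the paper leaves implicit.
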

\begin{proof}
        The tower of skeletons of $C$ is exactly the tower described in the proof of Theorem \ref{thm:immersion-trans} associated to the immersion $\emptyset \to C$.
\end{proof}

\subsection{The universal cofibrant replacement}

The previous corollary describes how to define morphisms out of those $\omega$\-categories that are free on a computad. We will use this to define, for each  $-1 \le n\le \omega$, a right adjoint $W_n : \omega\Cat\to \Comp_n^{\var}$ to the inclusion $K^w\Sk_n\zeta_n$ taking an $n$-computad to the free $\omega$-category on it.

The functor $W_{-1}$ is the unique functor to the terminal category, and it is right adjoint to $K^w\Sk_{-1}\zeta_{-1}$, since the latter sends the unique $(-1)$-computad to the initial $\omega$-category. Suppose that, for some $n \in \N$, we have $W_n$, right adjoint to $K^w\Sk_n\zeta_n$. Let $r_n$ be the counit of this adjunction. For an $\omega$-category $X$, we define $W_{n+1}X = (W_nX, V^{WX}_{n+1}, \phi^{WX}_{n+1})$ using the following pullback square:
\[\begin{tikzcd}
                {V^{WX}_{n+1}} && X_{n+1} \\
                && {\omega\Cat(\mathbb{D}^{n+1},X)} \\
                {\Type_{n}(W_{n}X)} & {\omega\Cat(\mathbb{S}^{n}, K^w\Sk_n\zeta_nW_nX)} & {\omega\Cat(\mathbb{S}^{n},X)}
                \arrow["{\phi_{n+1}^{WX}}", dashed, from=1-1, to=3-1]
                \arrow["\sim", from=3-1, to=3-2]
                \arrow["{(r_{n})_*}", from=3-2, to=3-3]
                \arrow["{(\iota_{n+1})^*}", from=2-3, to=3-3]
                \arrow["\rotatebox{90}{$\sim$}", from=1-3, to=2-3]
                \arrow["{k_{n+1,X}}", dashed, from=1-1, to=1-3]
        \end{tikzcd}\]
The natural isomorphisms in this diagram are given by $\mathbb{D}^{n+1}$ being free on a representable, by $\mathbb{S}^n$ classifying spheres, and $K^w$ being fully faithful respectively. For a homomorphism $f : X\to Y$, we define $W_{n+1}f = (W_nf, (W_{n+1}f)_V)$ to be the generator-preserving $(n+1)$-homomorphism given by
\[ (W_{n+1}f)_{V}(A, x) = \var\,(\Type_n(W_nf)(A), f(x)).\]

In order to show that $W_n$ is right adjoint to $K^w\Sk_{n+1}\zeta_{n+1}$, let $C$ be an $(n+1)$-computad, and let $X$ be an $\omega$-category. A generator-preserving ${(n+1)}$\-homomorphism $\sigma : C\to W_{n+1}X$ consists of
\begin{itemize}
        \item a generator-preserving morphism $\sigma_n : C_n \to W_nX$,
        \item a function $\sigma_V^1 : V^C_{n+1} \to X_{n+1}$,
        \item a function $\sigma_V^2 : V^C_{n+1} \to \Type_n(W_nX)$,
\end{itemize}
such that
\begin{align*}
        \src\,\sigma^1_V & = r_{n,X}(\pr_1 \sigma^2_V),     \\
        \tgt\,\sigma^1_V & = r_{n,X}(\pr_2 \sigma^2_V),     \\
        \sigma_V^2       & = \Type_n(\sigma_n)\phi_{n+1}^C.
\end{align*}
The first two equations are equivalent to $\sigma_V  = (\sigma_V^1, \sigma_V^2)$ defining a function into $V^{WX}_{n+1}$, while the third is equivalent to $(\sigma_n,\sigma_V)$ being an $(n+1)$-homomorphism. In the presence of this third equation, the function $\sigma_V^2$ is redundant and the other equations amount to
\begin{align*}
        \src\,\sigma^1_V & = r_n (K^w\Sk_n\zeta_n\sigma_n) (\pr_1\phi_n^C) = r_n (K^w\Sk_n\zeta_n\sigma_n)\,\src \\
        \tgt\,\sigma^1_V & = r_n (K^w\Sk_n\zeta_n\sigma_n) (\pr_2\phi_n^C) = r_n (K^w\Sk_n\zeta_n\sigma_n)\,\tgt
\end{align*}
By the inductive hypothesis, such data are in bijection to pairs of a homomorphism $\sigma_n^\dagger : \Sk_n C \to X$ and a function $\sigma_V^1$ as above, satisfying
\begin{align*}
        \src\,\sigma^1_V & = \sigma_n^\dagger\,\src, &
        \tgt\,\sigma^1_V & = \sigma_n^\dagger\,\tgt,
\end{align*}
which is the data of a homomorphism $\sigma_{n+1}^\dagger : \Sk_{n+1} C \to X$ by Corollary \ref{cor:univ-prop-of-computads}. It is easy to see that this bijection is natural in $X$ and $C$. The component of the counit $r_{n+1}$ at $X$ is the homomorphism given by $r_{n,X}$ and the projection $k_{n+1,X}$.

\begin{definition}
\label{def:W}
        For each $\omega$-category $X$, we define the computad $WX$ to consist of the $n$\-computads $(W_nX)$, and we define $f$ similarly on homomorphisms.
\end{definition}

A generator-preserving morphism $\sigma : C\to WX$ consists of a generator-preserving $n$-homo-morphisms $\sigma_n : C_n\to W_n X$ for every $n\in N$, satisfying that $u_{n+1}\sigma_{n+1} = \sigma_n$. Using the explicit description of the counit above and the triangle laws, it is easy to see that such families of $n$-homomorphisms are in bijection to morphisms $\sigma_n^\dagger : \Sk_nC\to X$ forming a cocone under the diagram in Corollary \ref{cor:univ-prop-of-computads}. Hence, they are in bijection to morphisms $\sigma^\dagger : C\to X$, and we have exhibited an adjunction $K^w\zeta \dashv W$.

\begin{definition}
        The \emph{computadic resolution} $(Q, \Delta, r)$ is the comonad induced by the adjunction $K^w\zeta \dashv W$.
\end{definition}

\noindent We will show that the pair $(Q, r)$ satisfies the hypothesis of Theorem \ref{thm:recongnition}. Since initial objects are unique up to unique isomorphism, this implies that, for each $\omega$\-category $X$, the morphism $r_X : QX\to X$ is its universal cofibrant replacement. We need to construct, for each $n\in \N$, each sphere $A\in \Type_n(WX)$, and each generator $x\in X_n$, a lift of the following form:
\[\begin{tikzcd}
                {K^w\mathbb{S}^{n-1}} & QX \\
                {K^w\mathbb{D}^n} & X
                \arrow["x", from=2-1, to=2-2]
                \arrow["A", from=1-1, to=1-2]
                \arrow["{r_X}", from=1-2, to=2-2]
                \arrow["{K^w\iota_n}"', from=1-1, to=2-1]
                \arrow["{\phi(n,x,A)}"{description}, dashed, from=2-1, to=1-2]
        \end{tikzcd}\]
Commutativity of this square is equivalent to $(A,x)$ being a generator of $W_{n+1}X$. Furthermore, the homomorphism $r_X$ acts on generators like a projection; that is,
\[ r_X(\var\, (A,x)) = x. \]
Hence, we may define the required lift by
\[ \phi(n, A, x) = \var\,(A, x). \]

\noindent We claim that $(r_X, \phi)$ is initial among objects above $X$ equipped with such a choice of lifts. Given $f : Y \to X$ with a choice of lifts $\psi$, we will define a sequence of homomorphisms $g_n : \Sk_nW_nX\to Y$ inductively. First, we define  $g_{-1}$ to be the unique homomorphism out of the initial $\omega$-category. The homomorphism $g_{n+1}$ is then given by $g_n$ on cells of dimension at most $n$ and sends the generator $(x,A)$ to
\[g_{n+1}(\var\,(x,A)) = \psi(n+1, g\circ A, x).\]
Finally, the homomorphism $g : QX\to Y$ is the one that agrees with $g_n$ on cells of dimension at most $n$. The homomorphism $g$ is the only one that can preserve the lifts strictly. We have that
\[g\circ \phi(n , A, x) = \psi(n, g\circ A, x).\]
Furthermore, any other homomorphism with this property would have to agree on all generators of $QX$ with $g$. For the same reason, we have that $fg = r_X$. Hence, $g$ is indeed the unique morphism above $X$ preserving lifts. To summarise, Theorem \ref{thm:recongnition} allows us to conclude the following proposition.

\begin{proposition}
        The pair $(Q, r)$ underlies the universal cofibrant replacement comonad on $\omega\Cat$ that is generated (in the sense of Garner~\cite{garner_homomorphisms_2010}) by the set of inclusions of spheres into disks.
\end{proposition}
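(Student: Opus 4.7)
The plan is to invoke Theorem~\ref{thm:recongnition}: since universal cofibrant replacements are unique up to unique isomorphism, it suffices to exhibit, for every $\omega$\-category $X$, a natural choice of liftings $\phi$ making the pair $(r_X, \phi)$ initial in the category of objects over $X$ equipped with such liftings against the generating cofibrations $K^w\iota_n : K^w\mathbb{S}^{n-1} \to K^w\mathbb{D}^n$.

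First I would specify the lift. A lifting problem against $K^w\iota_n$ lying over $r_X : QX \to X$ amounts to an $n$\-sphere $A \in \Type_{n-1}(W_{n-1}X)$ together with an $n$\-cell $x \in X_n$ whose source and target in $X$ are obtained from $A$ by applying $r_{n-1,X}$. By the pullback construction used to define the $n$\-generators of $W_nX$, this is exactly the data of a generator $(A, x) \in V_n^{WX}$. Thus I set $\phi(n, A, x) = \var(A, x) \in \Cell_n(W_nX)$; commutativity of both triangles is immediate, since $r_X$ acts as the canonical projection on generators.

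For initiality, suppose $f : Y \to X$ carries its own system of liftings $\psi$. I would construct the required morphism $g : QX \to Y$ over $X$ preserving liftings by induction on dimension, using the skeletal pushout decomposition of $QX$ from Corollary~\ref{cor:univ-prop-of-computads}. Starting from the unique $g_{-1}$ out of the initial $\omega$\-category, each extension $g_{n+1}$ is forced on new generators by the lift-preservation requirement
\[ g_{n+1}(\var(A, x)) = \psi(n+1,\; g_n \circ A,\; x). \]
This yields uniqueness automatically, and the identity $fg = r_X$ is verified by evaluating at generators, where both sides return $x$.

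The only nontrivial check is that $g_{n+1}$ extends $g_n$ coherently across each pushout square of Corollary~\ref{cor:univ-prop-of-computads}: the source and target of $\psi(n+1, g_n \circ A, x)$ in $Y$ must agree with those obtained from $g_n \circ A$. But this is precisely the commutativity of the lower triangle in the lifting square defining $\psi$, so the pushout universal property applies, giving a well-defined $g_{n+1}$. Assembling the $g_n$ into $g : QX \to Y$ completes the proof of initiality, and Theorem~\ref{thm:recongnition} then identifies $(Q, \Delta, r)$ as the universal cofibrant replacement comonad generated by $\{\iota_n\}_{n \in \N}$.
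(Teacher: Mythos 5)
Your proposal is correct and follows essentially the same route as the paper: defining the lift by $\phi(n,A,x)=\var(A,x)$, then establishing initiality by building $g$ dimension by dimension via the skeletal decomposition of Corollary~\ref{cor:univ-prop-of-computads}, with uniqueness forced on generators. The only (welcome) difference is that you make explicit the well-definedness check for $g_{n+1}$ across the pushout squares, which the paper leaves implicit; note only that the boundary compatibility of $\psi(n+1, g_n\circ A, x)$ comes from the triangle of the lifting square involving the top horizontal map $g_n\circ A$, not the one involving $f$.
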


\section{Comparison to other notions of \texorpdfstring{$\omega$}{ω}-categories}

Our goal in this section is to show that our notion of $\omega$-category coincides with that of Leinster \cite{leinster_higher_2003}. We will first introduce a class of morphisms called \emph{covers} and show that every morphism factors uniquely into a cover followed by an immersion. Using this observation, we will give an alternative characterisation of full spheres and show that morphisms of computads $\Free B \to \Free X$  also admit a \emph{(generic, free) factorisation}. This implies that the monad  $\fc^w$ has arities in the sense of Berger et al~\cite{berger_weber_2012}, in the full subcategory $\Theta_0$ of $\Glob$ consisting of the globular sets of positions of Batanin trees. Furthermore, the globular theory $\Theta_w$ associated to this monad is homogeneous. We will finally show that under the equivalence of homogeneous theories over the theory $\Theta_s$ of strict $\omega$-categories and Batanin's globular operads~\cite{berger_cellular_2002, berger_weber_2012}, the theory $\Theta_w$ corresponds to the initial contractible globular operad of Leinster \cite{leinster_higher_2003}. This implies that $\fc^w$ is isomorphic to Leinster's monad, and hence that our notion of $\omega$-category coincides with that of Leinster.

\subsection{Supports and covers}

Since $\Comp^{\var}$ is a presheaf category, every generator-preserving morphism can be factored into an epimorphism followed by a monomorphism. We will now extend this factorisation to an orthogonal factorisation system on the whole of $\Comp$; every morphism of computads factors essentially uniquely into a \emph{cover} followed by an \emph{immersion}. Recall that immersion are the generator-preserving monomorphisms.
Covers will be defined below in terms of the \emph{support}, and we will see that, a posteriori, they coincide with epimorphisms of computads.

\begin{definition}
        Suppose that $c \in \Cell_n(C)$ is a cell in a computad. For each $k\in \N$, we define the $k$\-dimensional \emph{support} $\fv_k(c) \subseteq V_k^C$ of $c$ inductively:
        \begin{itemize}
                \item If $k > n$, then
                      \[
                              \fv_k(c) = \emptyset.
                      \]

                \item If $n=0$, and $k = 0$, then
                      \[\fv_0(c) = \{c\}.\]

                \item If $n > 0$, and $c = \var v$. Then, we define
                      \[\fv_n(c) = \{v\},\]
                      and, for each $k < n$, we define
                      \[\fv_k(c) = \fv_k(\src\,\var\, v)\cup\fv_k(\tgt\,\var\, v),\]
                      where the union is taken inside $V_k^C$.

                \item If $n > 0$, and $c = \coh(B,A,\tau)$ is a coherence cell, then we define for $k \le n$,
                      \[\fv_k(c) = \bigcup_{\substack{m \in \N,\\ p\in\Pos_m(B)}} \fv_k(\tau_{m,V}(p)).\]
        \end{itemize}
        The $k$-dimensional \emph{support} of a morphism of computads $\sigma : D\to C$ is defined by
        \[\fv_k(\sigma) = \bigcup_{\substack{n \in \N, \\ v\in V^D_n}} \fv_k(\sigma_{n,V}(v)).\]
\end{definition}

\begin{remark}
        This notion of $k$-dimensional support subsumes the one given in Section \ref{sec-computads}, in that,
        for each $n$-cell $c$, we have that $\fv_n(c) = \fv(c)$.
\end{remark}

\noindent We can also define the $k$-dimensional support of a cell $c$ in an $n$-computad $C$ by viewing $c$ as a cell of $\Sk_n C$. Similarly, we can define the support of a morphism $\sigma$ of $n$-computads to be the support of $\Sk_n\sigma$. The following lemmas, that can be easily shown by induction, demonstrate how support allows us construct lifts along immersions and epimorphisms of computads.

\begin{lemma}\label{source-of-support}
        Let $c \in \Cell_n(C)$ a cell in a computad and $k\in \N$. Then
        \[
                \fv_k(\src\, c) \cup \fv_k(\tgt\, c) \subseteq \fv_k(c).
        \]
        Moreover, for every morphism of computads $\sigma : C\to D$,
        \[      \fv_k(\Cell_n(\sigma)(c)) = \bigcup_{\substack{m \in \N, \\ v\in \fv_m(c)}} \fv_k(\sigma_{m,V}(c))     \]
\end{lemma}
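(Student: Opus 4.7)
My plan is to prove the two statements simultaneously by an outer induction on the dimension $n$ of $c$ and, within each dimension, a structural (equivalently, inductive-depth) induction on $c$ as in Section~\ref{sec:well-foundedness}. To avoid circularity, within each dimension $n$ I will first establish the first inclusion for all $n$-cells before turning to the second.

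For the first inclusion with $c = \var v$, the case $k < n$ is immediate from the defining equation of $\fv_k(\var v)$, while for $k \ge n$ the source and target of $c$ are $(n-1)$-cells whose $k$-support is empty by convention. For the coherence case $c = \coh(B,A,\tau)$, I will use that $\src c = \Cell_{n-1}(\tau_{n-1})(\pr_1 A)$ is the image of an $(n-1)$-cell of $\Free_{n-1} B$ under $\tau_{n-1}$; applying the second inclusion in dimension $n-1$ (available from the outer induction) and observing that the support of any cell of $\Free_{n-1} B$ lies within the positions of $B$, I will obtain
\[
\fv_k(\src c) \;\subseteq\; \bigcup_{m,\,p \in \Pos_m(B)} \fv_k(\tau_{m,V}(p)) \;=\; \fv_k(c),
\]
and symmetrically for $\tgt c$.

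For the second inclusion with $c = \var v$, I will split the right-hand union by the dimension $m$: the $m=n$ contribution is $\fv_k(\sigma_{n,V}(v))$, matching the left-hand side, while the $m<n$ contributions come from $\fv_m(\src \var v) \cup \fv_m(\tgt \var v)$. Applying the second inclusion inductively to $\src \var v$ and $\tgt \var v$, together with the naturality of $\src$ and $\tgt$ with respect to $\Cell_n(\sigma)$ (valid since $\sigma$ is a homomorphism), these contributions rewrite as $\fv_k(\src \sigma_{n,V}(v)) \cup \fv_k(\tgt \sigma_{n,V}(v))$; the first inclusion at dimension $n$ applied to $\sigma_{n,V}(v)$ then absorbs them into $\fv_k(\sigma_{n,V}(v))$, so the union collapses to the left-hand side. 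For the coherence case $c = \coh(B,A,\tau)$, each cell $\tau_{m,V}(p)$ has strictly smaller inductive depth than $c$, so the inductive hypothesis gives the second inclusion for it; the left-hand side then expands as
\[
\bigcup_{m,\,p \in \Pos_m(B)} \fv_k(\Cell_m(\sigma)(\tau_{m,V}(p))) \;=\; \bigcup_{m,\,p,\,l,\,w \in \fv_l(\tau_{m,V}(p))} \fv_k(\sigma_{l,V}(w)),
\]
which reorganises via the defining identity $\fv_l(c) = \bigcup_{m,\,p} \fv_l(\tau_{m,V}(p))$ into the right-hand side.

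The main obstacle will be the bookkeeping of which statement at which dimension is available when: the second inclusion in the base case $c = \var v$ invokes the first inclusion at dimension $n$ applied to the unrelated cell $\sigma_{n,V}(v)$, which is precisely what forces the ordering within each dimension. Once this interleaving is set up correctly, the inductive-depth framework of Section~\ref{sec:well-foundedness} ensures that every recursive call reduces either the dimension or the depth of the cell under analysis, making the argument well-founded.
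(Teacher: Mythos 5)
Your proof is correct and is precisely the induction the paper leaves implicit (the paper states these lemmas "can be easily shown by induction" without giving details): an outer induction on dimension combined with an inner induction on inductive depth, with the only delicate point being the interleaving you identify — that the first inclusion at dimension $n$ must be established for all $n$-cells (since it is applied to the unrelated cell $\sigma_{n,V}(v)$) before the second equality at dimension $n$, which your ordering handles correctly.
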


\begin{lemma}\label{support-of-var-to-var}
        Let $\sigma : C\to D$ be a generator-preserving morphism. The $k$\-dimensional support of $\sigma$ are those of the form $\sigma_{k,V}(v)$, for some generator $v\in V_k^C$.
\end{lemma}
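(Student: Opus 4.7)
The plan is to prove the set equality $\fv_k(\sigma) = \sigma'_k(V_k^C) \subseteq V_k^D$. By the definition of $\fv_k(\sigma)$, together with the generator-preserving condition $\sigma_{n,V}(v) = \var(\sigma'_n(v))$, we have
\[
\fv_k(\sigma) = \bigcup_{n \in \N} \bigcup_{v \in V_n^C} \fv_k(\var \sigma'_n(v)).
\]
The inclusion $\sigma'_k(V_k^C) \subseteq \fv_k(\sigma)$ is immediate from the $n = k$ contribution, since $\fv_k(\var \sigma'_k(v)) = \{\sigma'_k(v)\}$ by the base clause of the support definition.

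For the reverse inclusion, I will prove by induction on $n$ the auxiliary claim that $\fv_k(\var \sigma'_n(v)) \subseteq \sigma'_k(V_k^C)$ for every $v \in V_n^C$. The cases $n < k$ (empty support) and $n = k$ (as just computed) require no work. For $n > k$, the definition of support unfolds as
\[
\fv_k(\var \sigma'_n(v)) = \fv_k(\src \var \sigma'_n(v)) \cup \fv_k(\tgt \var \sigma'_n(v)).
\]
The commuting square in the definition of an $n$-homomorphism, $\ty_{n,D} \circ \sigma_V = \Type_{n-1}(\sigma_{n-1}) \circ \phi_n^C$, combined with naturality of $\pr_1$ and $\pr_2$, yields
\[
\src \var \sigma'_n(v) = \Cell_{n-1}(\sigma_{n-1})(\src \var v), \qquad \tgt \var \sigma'_n(v) = \Cell_{n-1}(\sigma_{n-1})(\tgt \var v).
\]
Applying Lemma \ref{source-of-support} to each of these, and using that $\src \var v$ is an $(n-1)$-cell of $C$, the support expands as
\[
\fv_k(\Cell_{n-1}(\sigma_{n-1})(\src \var v)) = \bigcup_{m \le n-1} \bigcup_{u \in \fv_m(\src \var v)} \fv_k(\var \sigma'_m(u)),
\]
and analogously for $\tgt$. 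Since every $m$ appearing is strictly less than $n$, the inductive hypothesis places each $\fv_k(\var \sigma'_m(u))$ inside $\sigma'_k(V_k^C)$, closing the induction.

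There is no substantial obstacle. The proof is essentially a bookkeeping exercise in which the recursion in the definition of support is exactly mirrored by the induction on $n$, the preceding Lemma \ref{source-of-support} handles the interaction between support and $\Cell(\sigma)$, and the boundary compatibility of generator-preserving homomorphisms reduces the computation of the support of $\var \sigma'_n(v)$ at dimension $k < n$ to lower-dimensional data. The argument adapts immediately to the $\omega$-computad case, since both $\fv_k(\sigma)$ and $\sigma'_k$ depend only on the $k$-truncation of $\sigma$.
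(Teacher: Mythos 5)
Your proof is correct: the paper gives no argument for this lemma (it is stated as ``easily shown by induction''), and your strong induction on the dimension of the generator, combining the boundary-compatibility square of an $n$\-homomorphism with Lemma~\ref{source-of-support} to push the computation down to lower-dimensional generators, is exactly the intended bookkeeping. Nothing further is needed.
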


\begin{lemma}\label{immersion-lifting}
        Let $\sigma : D'\to D$ be an $n$-immersion, and let $\tau : C\to D$ be an $n$-homomorphism, for some $n\le \omega$. Then, there exists a unique
        $\tau' : C\to D'$ such that $\tau = \sigma \tau'$ if and only if $\fv_k(\tau)\subseteq\fv_k(\sigma)$ for all $k\in \N$.
\end{lemma}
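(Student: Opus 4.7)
The forward direction is immediate from Lemma~\ref{source-of-support}: if $\tau = \sigma\tau'$, then for each generator $v$ of $C$ the cell $\tau_{m,V}(v) = \Cell_m(\sigma)(\tau'_{m,V}(v))$ has $k$-support equal to the union of the $k$-supports $\fv_k(\sigma_{j,V}(w))$ over $w \in \fv_j(\tau'_{m,V}(v))$, and each such set lies in $\fv_k(\sigma)$. Uniqueness of $\tau'$ is also immediate: an immersion is monic in $\Comp$, and since $\Cell$ is a right adjoint, $\Cell(\sigma)$ is pointwise injective, so any two lifts of $\tau$ necessarily agree on generators and hence are equal.

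For existence, the plan is to establish the following \emph{sectioning property} by outer induction on dimension $n$: every $n$-homomorphism $\rho : X \to D_n$ satisfying $\fv_j(\rho) \subseteq \fv_j(\sigma)$ for all $j$ admits a unique lift $\rho^\flat : X \to D'_n$ with $\sigma_n\rho^\flat = \rho$. The case $n=0$ is just the injectivity observation above. Assuming the sectioning property at dimension $n$, we prove it at dimension $n+1$: first lift the underlying $n$-homomorphism $\rho_n$ using the outer induction, then, for each generator $v \in V^X_{n+1}$, lift the cell $\rho_V(v)$ via a subsidiary \emph{cell-sectioning lemma} proved by transfinite induction on the inductive depth introduced in Section~\ref{sec:well-foundedness}. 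In the cell-sectioning lemma, the case $c = \var\, w$ uses Lemma~\ref{support-of-var-to-var} together with injectivity of the generator assignment to produce a unique generator of $D'$ lifting $w$; the case $c = \coh(B, A, \xi)$ uses the outer induction to lift the lower-dimensional part $\xi_n$ of $\xi$, and the inner depth induction to lift each top-dimensional component $\xi_V(p)$ (which has strictly smaller inductive depth than $c$), then sets $c^\flat = \coh(B, A, \xi^\flat)$.

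The main technical obstacle is assembling the individual cell lifts into a genuine homomorphism, that is, verifying the boundary compatibility for $\xi^\flat$ (and similarly for $\rho^\flat$) in each dimension. Applying $\Type_n(\sigma_n)$ to the desired equation and using naturality of $\ty$, together with the identities $\Cell(\sigma)(\xi^\flat_V(p)) = \xi_V(p)$ and $\sigma_n\xi^\flat_n = \xi_n$ supplied by the outer inductive hypothesis, reduces the problem to the boundary condition for $\xi$ itself, which holds by assumption. Injectivity of $\Type_n(\sigma_n)$, which follows from $\sigma$ being monic combined with $\Type$ preserving monomorphisms, then forces the equality. Once the sectioning property is established in every finite dimension, the case $n = \omega$ follows by assembling the dimensionwise lifts into a computad map, whose compatibility with the forgetful functors $u_{n+1}$ is forced by the uniqueness clause.
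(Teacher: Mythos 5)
Your proof is correct, and it fleshes out exactly the argument the paper leaves implicit (the paper only remarks that this lemma "can be easily shown by induction"): an outer induction on dimension with an inner induction on the structure/depth of cells, using Lemma~\ref{support-of-var-to-var} in the generator case, the support identity of Lemma~\ref{source-of-support} for the forward direction, and injectivity of $\Cell(\sigma)$ and $\Type_n(\sigma_n)$ (from immersions being monic) to force the boundary compatibility and uniqueness. The only detail worth spelling out in a full write-up is that when you pass from $\xi$ to its truncation $\xi_n$ in the coherence case, the hypothesis $\fv_j(\xi_n)\subseteq\fv_j(\sigma_n)$ follows because Lemma~\ref{support-of-var-to-var} gives $\fv_j(\sigma_n)=\fv_j(\sigma)$ for $j\le n$.
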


\begin{lemma}\label{support-epis}
        Two homomorphisms $\sigma, \sigma' : C\to D$ agree on a cell $c\in \Cell_n(C)$ if and only if they agree on its support.
\end{lemma}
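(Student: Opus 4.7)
The plan is to prove both directions simultaneously by nested induction: an outer induction on the dimension $n$ of the cell $c$, and, for each fixed dimension, an inner induction on the cell-depth $\cdpth(c)$ introduced in Section~\ref{sec:well-foundedness}. Since an $\omega$-homomorphism is a compatible family of $n$-homomorphisms, it suffices to prove the statement at each finite dimension, after which the result for $\omega$-homomorphisms follows dimension-by-dimension. The base case $n=0$ is immediate, since $\Cell_0(\sigma)(c)=\sigma_0(c)$ and $\fv_0(c)=\{c\}$, so the two conditions are literally the same.

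For the inductive step, fix $n>0$ and assume the statement for all cells of dimension strictly less than $n$; then argue by inner induction on $\cdpth(c)$. When $c=\var v$ with $v\in V_n^C$, the equality $\Cell_n(\sigma)(\var v)=\Cell_n(\sigma')(\var v)$ unfolds, by the definition of the action on generators, to $\sigma_{n,V}(v)=\sigma'_{n,V}(v)$, which is precisely agreement on the top-dimensional generator $v\in\fv_n(c)$. For generators of lower dimension, the recursion $\fv_k(\var v)=\fv_k(\src\,\var v)\cup\fv_k(\tgt\,\var v)$, combined with naturality of source and target under homomorphisms, reduces the remaining claim to agreement on the cells $\src\,\var v$ and $\tgt\,\var v$ of $C_{n-1}$, which the outer inductive hypothesis supplies in both directions.

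When $c=\coh(B,A,\tau)$, injectivity of the $\coh$ constructor gives $\Cell_n(\sigma)(c)=\Cell_n(\sigma')(c)\iff\sigma\circ\tau=\sigma'\circ\tau$. A morphism out of $\Free_n B$ is determined by its values on positions of $B$, so this equation is equivalent to the family of equalities $\Cell(\sigma)(\tau_V(p))=\Cell(\sigma')(\tau_V(p))$ ranging over all positions $p$. Dually, by the definition of support for a coherence cell, $\fv_k(c)=\bigcup_p\fv_k(\tau_V(p))$, so agreement on the support of $c$ at each dimension is equivalent to agreement on the support of $\tau_V(p)$ for every $p$. Each $\tau_V(p)$ satisfies $\cdpth(\tau_V(p))<\cdpth(c)$ and has dimension at most $n$, so the inner inductive hypothesis—or the outer one in case the dimension drops—provides the required equivalence for each $\tau_V(p)$, and the case is closed by taking the conjunction over $p$.

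The principal subtlety lies in the choice of induction. Cell-depth is the natural invariant controlling the coherence case, since the sub-cells $\tau_V(p)$ are strictly smaller in that measure; however, in the variable case the boundary cells $\src\,\var v$ and $\tgt\,\var v$ are cells of $C_{n-1}$ of potentially unbounded cell-depth, so an outer induction on dimension is required to handle them. The nested structure—outer on dimension, inner on cell-depth—uniformly accommodates both constructors and both directions of the biconditional.
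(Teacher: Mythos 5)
Your proof is correct and is precisely the structural induction that the paper leaves implicit (the paper only remarks that this lemma ``can be easily shown by induction'' and gives no details). Your nested scheme---outer induction on dimension to handle the boundary cells of a generator and the low-dimensional positions of a coherence, inner induction on cell-depth for the top-dimensional case, using the homomorphism compatibility square in the $\var$ case and injectivity of the $\coh$ constructor together with the fact that morphisms out of $\Free_n B$ are determined on positions---is a faithful and complete realisation of that argument.
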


\begin{remark}
        Lemma \ref{support-of-var-to-var} implies in particular that the $k$-dimensional support of a generator-preserving homomorphism $\sigma$ only depends on its truncation $U_k\sigma$. This is not true in general. Consider for example the computad corresponding to the tree $B_2 = \br\,[D_0,D_0]$ of Example \ref{example:one-dimensional-trees} consisting of two composable $1$-cells $f : x\to y$ and
        $g : y \to z$. THe homomorphism $\sigma : \mathbb{D}^1\to \Free B_2$ picking the composite $\comp(f,g)$ has $y$ in its support. However, $y$ is not in the support of $\sigma_0$.
\end{remark}

\begin{definition}
        We say that a morphism of computads $\sigma : D \to C$ is a \emph{cover} when $\fv_k(\sigma) = V^C_k$ for all $k\in \N$. We say that a cell $c\in \Cell_n(C)$ \emph{covers} $C$ when the corresponding morphism $\mathbb{D}^n\to C$ is a cover.
\end{definition}

\begin{example}
        By Lemma \ref{source-of-support}, covers are closed under composition and every isomorphism is both a cover and an immersion.
\end{example}

\begin{example}
        A coherence cell $\coh\,(B,A,\tau)$ is a cover if and only if $\tau$ is. In particular, cells of the form $\coh\,(B,A,\id)$ are always covers.
\end{example}

\begin{remark}
        A direct consequence of Lemma \ref{support-epis} is that covers are epimorphisms. Since $\Comp^{\var}$ is balanced, it follows that a morphism is both a cover and an immersion exactly when it is an isomorphism.
\end{remark}

\begin{remark}
        Lemmas \ref{source-of-support} and \ref{immersion-lifting} imply that the support of a morphism $\sigma : C\to D$ form a computad $\fim_\sigma$ and that $\sigma$ factors into a cover followed  by an immersion via $\fim_\sigma$. This factorisation is easily seen to be unique. Hence, covers and immersions form an orthogonal factorisation systems in $\Comp$. One corollary of this result is that epimorphism are covers.
\end{remark}

The following lemma and its converse, which is shown in Proposition \ref{full-types-as-covers}, offers an equivalent characterisation of full spheres in terms of covers.

\begin{lemma}\label{type-to-cover}
        Let $A$ a full $n$-sphere of a Batanin tree $B$. There exist unique cells $a, b$ covering the boundary $\partial_n B$ such that
        \begin{align*}
                \pr_1A & = \fc^w(s_n^B)(a) &
                \pr_2A & = \fc^w(t_n^B)(b)
        \end{align*}
\end{lemma}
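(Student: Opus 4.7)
The plan is to prove the claim by induction on $n$, treating $a$ and $b$ symmetrically (with $t_n^B$ in place of $s_n^B$ for $b$). The base case $n = 0$ is immediate, since $\partial_0 B = \br\,[\,]$ has a single position, which by the construction of the source and target $0$-inclusions and the fullness of $A$ lifts the specified cells uniquely.

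For the inductive step, the key observation is that $\Free(s_n^B) : \Free(\partial_n B) \to \Free B$ is an immersion: by Proposition \ref{prop:src-tgt-equations}(4) the morphism $s_n^B$ is injective on positions of dimension $\le n$, and $\partial_n B$ carries no positions of greater dimension, so $\Free(s_n^B)$ is a monomorphism in $\Comp$ and $\fc^w(s_n^B)$ is injective on cells in every dimension. This immediately yields uniqueness of any lift. For existence I would apply Lemma \ref{immersion-lifting} to the classifying morphism $\mathbb{D}^n \to \Free B$ of $a'$; by Lemma \ref{support-of-var-to-var} the $k$-dimensional support of $\Free(s_n^B)$ is $s_n^B(\Pos_k(\partial_n B))$, so the lifting condition must be verified in each dimension. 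It is trivial for $k > n$; it is automatic for $k < n$, since $s_n^B$ is then bijective on $k$-positions by Proposition \ref{prop:src-tgt-equations}(4); and for $k = n$ it reduces to $\fv_n(a') = \partial_n^s(B)$ (by fullness) being the image of $s_n^B$ on $n$-positions, which holds by Proposition \ref{prop:boundary-positions-generation}. The resulting lift is the desired cell $a$.

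To check that $a$ covers $\partial_n B$, I would first handle the top dimension: since $\Free(s_n^B)$ is generator-preserving, Lemma \ref{source-of-support} gives $\fv_n(a') = s_n^B(\fv_n(a))$; combined with $\fv_n(a') = \partial_n^s(B) = s_n^B(\Pos_n(\partial_n B))$ and the injectivity of $s_n^B$ on $n$-positions, this forces $\fv_n(a) = \Pos_n(\partial_n B)$. For dimensions $k < n$, I would invoke the inductive hypothesis applied to the full $(n-1)$-sphere $A'$ underlying $A$, producing unique cells $\alpha, \beta$ covering $\partial_{n-1} B$ with $\fc^w(s_{n-1}^B)(\alpha) = \src a'$ and $\fc^w(t_{n-1}^B)(\beta) = \tgt a'$. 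Using the factorisation $s_{n-1}^B = s_n^B \circ s_{n-1}^{\partial_n B}$ of Proposition \ref{prop:src-tgt-equations}(3) together with the injectivity of $\fc^w(s_n^B)$, this gives $\src a = \fc^w(s_{n-1}^{\partial_n B})(\alpha)$ and $\tgt a = \fc^w(t_{n-1}^{\partial_n B})(\beta)$.

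The main obstacle is the final exhaustion step: one must verify that combining $\fv_n(a) = \Pos_n(\partial_n B)$ with the lower-dimensional supports of $\src a$ and $\tgt a$ forces $\fv_k(a) = \Pos_k(\partial_n B)$ for every $k < n$. This relies on $\fv(a)$ being a globular subset of $\Pos(\partial_n B)$ closed under sources and targets, on $\fv(\src a)$ and $\fv(\tgt a)$ covering the images of $s_{n-1}^{\partial_n B}$ and $t_{n-1}^{\partial_n B}$ respectively, and on Proposition \ref{prop:boundary-positions-generation} applied to $\partial_n B$, which guarantees that these images together with all of $\Pos_n(\partial_n B)$ generate the entire globular set $\Pos(\partial_n B)$.
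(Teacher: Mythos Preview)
Your proposal is correct and uses the same core ingredients as the paper: the observation that $\Free(s_n^B)$ is an immersion, the lifting Lemma~\ref{immersion-lifting} for existence and uniqueness of $a$, and support computations via Lemmas~\ref{source-of-support} and~\ref{support-of-var-to-var} together with Proposition~\ref{prop:boundary-positions-generation}.

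The only substantive difference is in how you verify that $a$ covers $\partial_n B$. The paper does this in one stroke: unwinding the recursive definition of fullness (together with Lemma~\ref{source-of-support}) shows that $\fv_k(\pr_1 A)$ contains all source boundary $k$-positions for every $k\le n$, with equality at $k=n$; Proposition~\ref{prop:boundary-positions-generation} then gives $\fv_k(\pr_1 A) = \fv_k(\Free s_n^B)$ for all $k$, and since $s_n^B$ is injective, Lemma~\ref{support-of-var-to-var} forces $\fv_k(a) = \Pos_k(\partial_n B)$. You instead layer an explicit outer induction on $n$, handle $k=n$ directly, and for $k<n$ use the inductive hypothesis to write $\src a$ and $\tgt a$ as pushforwards of covers along $s_{n-1}^{\partial_n B}$ and $t_{n-1}^{\partial_n B}$, followed by an exhaustion argument. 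This works---your final paragraph is correct once one observes that every $(n{-}1)$-position of $\partial_n B$ is either source boundary or the target of some $n$-position, so $\partial_{n-1}^s(\partial_n B)$ together with the sources and targets of $\Pos_n(\partial_n B)$ already exhausts $\Pos_{n-1}(\partial_n B)$---but it is longer and reproves by hand what the paper extracts directly from the support equality. The paper's packaging is more economical; yours makes the induction hidden in the definition of fullness fully explicit.
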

\begin{proof}
        By the inductive definition of full spheres and Lemma \ref{source-of-support}, we can easily see that the $k$-dimensional support of $\pr_1 A$ must contain all source boundary $k$-positions when $k\le n$, and that equality holds when $k = n$. Lemma \ref{source-of-support} and Proposition \ref{prop:boundary-positions-generation} imply then that the $k$-dimensional support of $\pr_1 A$ and $\Free\,(s_n^B)$ coincide. The lifting lemma \ref{immersion-lifting} implies existence and uniqueness of $a$, while \ref{support-of-var-to-var} implies that $a$ covers $\partial_nB$. The existence of $b$ follows similarly.
\end{proof}

\subsection{Generic-free factorisations}

Let $\Theta_0$ the category with objects Batanin trees and morphisms of globular sets between their sets of positions
\[\Theta_0(B, B') = \Glob(B,B')\]
Our goal in this section is to show that the monad $\fc^w$ has arities in $\Theta_0$ in the sense of \cite{berger_weber_2012}. To do that, we will construct a \emph{generic}, generator-preserving factorisation for every homomorphism of the form $\Free B\to \Free X$.

\begin{definition}
        Let $X, Y$ globular sets. We will say that a morphism of computads $\sigma : \Free\,X \to \Free\,Y$ is \emph{generic} when for every solid commutative square of the form
        \[\begin{tikzcd}
                        {\Free\,X} & {\Free\,Z} \\
                        {\Free\,Y} & {\Free\,W}
                        \arrow["f"', from=2-1, to=2-2]
                        \arrow["g", from=1-2, to=2-2]
                        \arrow["\tau", from=1-1, to=1-2]
                        \arrow["\sigma"', from=1-1, to=2-1]
                        \arrow["h"{description}, dashed, from=2-1, to=1-2]
                \end{tikzcd}\]
        with $f, g$ generator-preserving, there exists a unique $h$ making the entire diagram commute.
\end{definition}

\begin{example}
        Every isomorphism is generic. If a generator-preserving morphism $\sigma : \Free\,X\to \Free\,Y$ is generic then it is an isomorphism by the existence of a lift to the following square:
        \[\begin{tikzcd}
                        {\Free\,X} & {\Free\,X} \\
                        {\Free\,Y} & {\Free\,Y}
                        \arrow[from=2-1, to=2-2, equals]
                        \arrow["\sigma", from=1-2, to=2-2]
                        \arrow[from=1-1, to=1-2, equals]
                        \arrow["\sigma"', from=1-1, to=2-1]
                        \arrow[dashed, from=2-1, to=1-2]
                \end{tikzcd}\]
\end{example}

\begin{remark}
        Generic homomorphisms correspond precisely to the $\fc^w$-generic morphisms of Berger et al~\cite{berger_weber_2012} under the adjunction $\Free\dashv \Cell$, since generator-preserving homomorphisms correspond to free ones and $\Free$ is faithful.
\end{remark}

\begin{lemma}\label{generic-free-unique}
        Suppose that we have Batanin trees $B_1$, $B_2$, $B_3$ and a commutative square
        \[\begin{tikzcd}
                        {\Free\, B_1} & {\Free\, B_3} \\
                        {\Free\, B_2} & {\Free\,X}
                        \arrow["f"', from=2-1, to=2-2]
                        \arrow["g", from=1-2, to=2-2]
                        \arrow["\tau", two heads, from=1-1, to=1-2]
                        \arrow["\sigma"', from=1-1, to=2-1]
                \end{tikzcd}\]
        where $\sigma$ is generic, $\tau$ is a cover, and $f$ and $g$  are generator-preserving. Then, $B_2 = B_3$, $f = g$ and $\sigma = \tau$.
\end{lemma}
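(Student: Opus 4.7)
The plan is to apply the genericity of $\sigma$ to the given square itself, extract a mediating morphism $h : \Free B_2 \to \Free B_3$, and then show that $h$ must be the identity (after identifying $B_2 = B_3$), from which the desired equalities $\sigma = \tau$ and $f = g$ will follow immediately.

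Since the square commutes and both $f$ and $g$ are generator-preserving, applying the defining property of $\sigma$ being generic produces a unique morphism $h : \Free B_2 \to \Free B_3$ with $h\sigma = \tau$ and $gh = f$. I would then extract two properties of $h$. First, $h$ is itself generator-preserving: the cancellation property of Remark~\ref{generator-preserving-two-out-of-three}, applied to the generator-preserving composite $gh = f$, forces $h$ to be generator-preserving. Second, $h$ is a cover: the composite $\tau = h\sigma$ is a cover and hence an epimorphism (by the consequence of Lemma~\ref{support-epis} that covers are epimorphisms), so $h$ is an epimorphism as the second factor; since the $(\text{cover}, \text{immersion})$ orthogonal factorisation system in $\Comp$ forces epimorphisms to coincide with covers, $h$ is a cover.

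The key step is to promote $h$ to an isomorphism with $B_2 = B_3$. Being generator-preserving, $h$ is of the form $\Free(h^\dagger)$ for some morphism of globular sets $h^\dagger : \Pos(B_2) \to \Pos(B_3)$, and being a cover, Lemma~\ref{support-of-var-to-var} forces $h^\dagger$ to be surjective in every dimension. The final and most delicate ingredient is the rigidity of Batanin trees: any surjective morphism of globular sets between the position globular sets of two Batanin trees is necessarily the identity, and in particular the two trees must be equal. I would justify this combinatorially via the zigzag-sequence description of Remark~\ref{rem:trees-are-sums}, under which $\Pos(B)$ realises a specific globular sum determined by the sequence of $B$, and any surjection between such globular sums forces equality of the underlying sequences and the identity morphism. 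This rigidity argument is the main obstacle of the proof.

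Once $h = \id$ and $B_2 = B_3$ are established, the defining equations $h\sigma = \tau$ and $gh = f$ collapse to $\sigma = \tau$ and $f = g$, completing the proof.
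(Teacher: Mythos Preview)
Your proposal is correct and follows essentially the same route as the paper: extract the lift $h$ from genericity, argue it is generator-preserving (via the cancellation of Remark~\ref{generator-preserving-two-out-of-three}) and a cover (via the second factor of an epimorphism being epi, and epis being covers), then invoke rigidity of Batanin trees to force $h=\id$. The only cosmetic difference is that the paper discharges the rigidity step by citing Berger's result that all morphisms between tree position globular sets are monic (hence an epic one is an iso, and isos between trees are identities), whereas you propose to argue it directly from the zigzag description; both are fine.
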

\begin{proof}
        By definition of $\sigma$ being generic, there exists some $h : \Free\, B_2 \to \Free\, B_3$ such that $f=gh$ and $\tau = h \sigma$. By the first equation, $h$ must be generator-preserving, so $h = \Free h'$ for some morphisms of globular sets $h' : B\to B'$. By the second one, $h$ must be a cover, so by Lemma \ref{support-of-var-to-var}, $h'$ is epic. It follows from~\cite[Lemma~1.3]{berger_cellular_2002} that $h'$ must be an identity morphism.
\end{proof}

Let $\Bat$ be the globular set whose $n$-cells are Batanin trees $B$ of dimension at most $n$, and whose $n$-source and $n$-target are both given by the boundary operator $\partial_n$. The assignment $B \mapsto \Pos\,B$ can be extended to a functor ${\Pos : \el(\Bat) \to \Glob}$ out of the category of elements of $\Bat$ by sending the source and target morphisms $(n, \partial_n B) \to (n+k, B)$ to the morphisms $s_n^B$ and $t_n^B$ respectively. To define the (generic, free)-factorisation, we use the following proposition of Weber which says that the class of Batanin trees is closed under certain colimits.

\begin{proposition}\label{pasting-batanin-trees}\cite[Proposition~4.7]{weber_pra_2004}
        Let $f : \Pos\,(B) \to \Bat$ a morphism where $B$ is a Batanin tree. There exists (unique) Batanin tree $B_f$ of dimension at most $\dim\, B$ such that
        \[ \colim(\Pos\circ \el(f)) = \Pos\,B_f. \]
        Moreover, this construction commutes with the source inclusions in that
        \[ \colim(\Pos\circ \el(f\circ s_n^B)) = \Pos\,\partial_nB_f \]
        and the canonical morphism between the colimits is $s_n^B$, and similarly it commutes with target inclusions.
\end{proposition}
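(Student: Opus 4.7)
My plan is to prove the statement by structural induction on $B$.

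For the base case $B = \br\,[\,]$, the globular set $\Pos(B)$ has a unique position, of dimension $0$, which must be sent by $f$ to a Batanin tree of dimension $0$, namely $\br\,[\,]$ itself. The category of elements is then terminal and $\colim(\Pos\circ \el(f)) = \Pos(\br\,[\,])$, so setting $B_f = \br\,[\,]$ works, and the compatibility with $s_0^B = t_0^B = \id$ is trivial.

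For the inductive step, write $B = \br\,[B_0, B_1, \ldots, B_n]$ and set $B' = \br\,[B_1, \ldots, B_n]$ (interpreted as $\br\,[\,]$ when $n=0$). The key structural observation is that $\Pos(B)$ admits a natural pushout decomposition in $\Glob$: the positions of the form $\inl(-)$ contribute a ``suspended'' copy of $\Pos(B_0)$ with the unique $0$-source and $0$-target identified with $\herepos$ and $\inr\,\herepos$ of $\Pos(B)$ respectively, while the positions of the form $\inr(-)$ contribute a copy of $\Pos(B')$, and the two pieces are glued along $\herepos$ and $\inr\,\herepos$. Restricting $f$ along the two sides of this pushout yields a morphism $f' : \Pos(B') \to \Bat$ and, after removing the suspension via the fact that $\br\,[\cdot] : \Bat \to \Bat$ endows $\Bat$ with a suspension structure, a morphism $f_0 : \Pos(B_0) \to \Bat$. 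By induction we obtain Batanin trees $(B_0)_{f_0}$ of dimension $\le \dim B_0$ and $(B')_{f'} = \br\,[C_0, C_1, \ldots, C_m]$ of dimension $\le \dim B'$, with the respective colimits identified with their sets of positions. I then define
\[
  B_f = \br\,[(B_0)_{f_0}, C_0, C_1, \ldots, C_m],
\]
whose dimension is $\max(\dim (B_0)_{f_0} + 1, \dim (B')_{f'}) \le \max(\dim B_0 + 1, \dim B') = \dim B$. The identification $\Pos(B_f) \cong \colim(\Pos\circ \el(f))$ then follows by matching the pushout decomposition of $\Pos(B_f)$ determined by its outer $\br$-constructor against the decomposition of $\Pos(B)$ above, and pulling the inductive identifications through.

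For the compatibility with source inclusions, the restriction $f \circ s_n^B$ factors through $\Pos(\partial_n B)$ by definition, and unwinding the inductive constructions of $\partial_n$ and of $B_f$ on $\br\,[\cdots]$ shows that the tree associated to this restriction is precisely $\partial_n B_f$, with the canonical comparison between the colimits being $\Pos(s_n^{B_f})$; the target case is symmetric. The main obstacle I anticipate is the suspension bookkeeping in the inductive step, namely verifying that $f$ restricted to the $\inl$-part genuinely de-suspends to a morphism $f_0 : \Pos(B_0) \to \Bat$ of globular sets with the correct dimension shift; this relies on $f$ sending $\herepos$ and $\inr\,\herepos$ to the unique $0$-tree $\br\,[\,]$, together with the ``suspension structure'' on $\Bat$ given by $\br\,[\cdot]$. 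A more conceptual alternative, taken by Weber, is to observe that $\fc^s$ is a parametric right adjoint with indexing category $\Theta_0$, and that the monad multiplication $\mu^s$ preserves generic morphisms; this gives the pasting construction for free and sidesteps the explicit inductive bookkeeping altogether.
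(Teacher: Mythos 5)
The paper does not prove this proposition at all: it is imported verbatim from Weber, so the only thing to assess is your argument on its own terms. It has a genuine gap in the inductive step, at exactly the point you flag as ``suspension bookkeeping'': the restriction of $f$ to the $\inl$\-part of $\Pos(\br\,[B_0,B_1,\dots,B_n])$ does \emph{not} de-suspend to a morphism $f_0 : \Pos(B_0)\to \Bat$. A position $\inl\,(q)$ with $q\in\Pos_k(B_0)$ has dimension $k+1$, so $f(\inl\,(q))$ is an arbitrary Batanin tree of dimension at most $k+1$, and such a tree lies in the image of $\br\,[\cdot]:\Bat\to\Bat$ only when it has exactly one root branch. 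Concretely, take $B=D_1=\br\,[D_0]$ and let $f$ send the unique $1$\-position to $\br\,[D_0,D_0]$. Then $\colim(\Pos\circ\el(f))=\Pos(\br\,[D_0,D_0])$, so the correct $B_f=\br\,[D_0,D_0]$ has two root branches, whereas your formula $B_f=\br\,[(B_0)_{f_0},C_0,\dots,C_m]$ (here $B'=\br\,[\,]$, so there are no $C_i$) forces exactly one root branch; indeed no tree $(B_0)_{f_0}$ with $\br\,[(B_0)_{f_0}]=\br\,[D_0,D_0]$ exists. The underlying point is that the loop globular set of $\Bat$ at its $0$\-cell is not $\Bat$ via $\br\,[\cdot]$: its $k$\-cells are \emph{lists} of trees of dimension at most $k$, not single such trees.

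To repair the induction along your lines, the first branch must be allowed to contribute a whole list: the colimit over the suspended copy of $\el(\Pos\,B_0)$ is $\Pos$ of a tree $\br\,[E_1,\dots,E_r]$ whose root branches have to be extracted by a finer induction (essentially proving the statement for the ``looped'' morphism $\Pos(B_0)\to\Omega\Bat$ valued in lists of trees), after which one sets $B_f=\br\,[E_1,\dots,E_r,C_0,\dots,C_m]$. This extra bookkeeping is precisely what Weber's zigzag/globular-sum description (Remark~\ref{rem:trees-are-sums}, \cite[Section~4]{weber_pra_2004}) or the parametric-right-adjoint argument you mention at the end packages away. The rest of your outline --- the base case, the pushout decomposition of $\el(\Pos\,B)$ over the object $\inr\,(\herepos)$, and the compatibility with $s_n^B$ and $t_n^B$ once the correct $B_f$ is in hand --- is sound as described.
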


\begin{proposition}\label{generic-free}
        Suppose that $B$ is a Batanin tree, $X$ is globular set and $\sigma : \Free\, B\to \Free\, X$ a homomorphism. Then $\sigma$ factors uniquely as
        \[\begin{tikzcd}
                        \Free\, B \ar[r, "\gen_{\sigma}"] &
                        \Free\,{B_\sigma} \ar[r, "\fr_\sigma"] &
                        \Free X
                \end{tikzcd}\]
        for some Batanin tree $B_\sigma$ of dimension at most $\dim\,B$, some generic $\gen_\sigma$ and some generator-preserving morphism $\fr_\sigma$.
\end{proposition}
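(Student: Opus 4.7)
The plan is to construct $\gen_\sigma$ and $\fr_\sigma$ by induction on the inductive depth of $\sigma$, and then derive uniqueness from Lemma \ref{generic-free-unique}. If $\sigma$ is already generator-preserving, we simply take $B_\sigma = B$, $\gen_\sigma = \id_{\Free B}$, and $\fr_\sigma = \sigma$.

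In the inductive step, for each $n$-position $p \in \Pos_n(B)$ the cell $c_p := \sigma(\var p)$ is either of the form $\var v$ or of the form $\coh(C_p, A_p, \tau_p)$, where in the latter case $\tau_p$ has strictly smaller inductive depth. Applying the inductive hypothesis to $\tau_p$ gives a factorisation $\tau_p = \fr_{\tau_p} \circ \gen_{\tau_p}$ with $\gen_{\tau_p} : \Free C_p \to \Free B'_p$ generic, and we record the \emph{shape} of $p$ to be $B'_p$ (respectively $D_n$ in the variable case). The first technical step is to verify that this shape assignment extends to a morphism of globular sets $\Pos(B) \to \Bat$; this reduces, via the uniqueness half of the inductive hypothesis, to the compatibility of $\tau_p$ with the source and target inclusions $s_{n-1}^{C_p}$ and $t_{n-1}^{C_p}$ granted by Proposition \ref{prop:src-tgt-equations}. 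Weber's Proposition \ref{pasting-batanin-trees} then produces a Batanin tree $B_\sigma$ of dimension at most $\dim B$ with $\Pos(B_\sigma) = \colim(\Pos \circ \el(\text{shape}))$ and canonical inclusions $\iota_p : \Pos(B'_p) \to \Pos(B_\sigma)$. I would then set $\gen_\sigma(\var p) := \Cell(\Free(\iota_p))(\coh(C_p, A_p, \gen_{\tau_p}))$ in the coherence case and $\gen_\sigma(\var p) := \var \iota_p(\herepos)$ in the variable case, and define $\fr_\sigma : \Free B_\sigma \to \Free X$ as the unique generator-preserving homomorphism determined by $\fr_\sigma \circ \Free(\iota_p) = \fr_{\tau_p}$, whose well-definedness follows from the universal property of the colimit combined with uniqueness of the $\fr_{\tau_p}$ on shared boundaries. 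A straightforward calculation confirms $\fr_\sigma \circ \gen_\sigma = \sigma$.

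The main obstacle is verifying that the resulting $\gen_\sigma$ is genuinely generic, i.e.\ that every commutative square with generator-preserving vertical sides admits a unique diagonal filler. Existence should follow by using the genericness of each $\gen_{\tau_p}$ to produce local fillers $\Free B'_p \to \Free Z$, then gluing them along the inclusions $\iota_p$ via the universal property of the colimit defining $B_\sigma$. Uniqueness of this diagonal filler, and uniqueness of the factorisation itself, both reduce to Lemma \ref{generic-free-unique} once one knows that $\gen_\sigma$ is a cover; this last fact follows inductively from the covering property of each $\gen_{\tau_p}$ combined with Lemma \ref{type-to-cover} applied to the full spheres $A_p$.
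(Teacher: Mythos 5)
Your overall architecture coincides with the paper's: factor the cell at each position, glue the resulting shapes into a tree $B_\sigma$ via Proposition \ref{pasting-batanin-trees}, define $\gen_\sigma$ and $\fr_\sigma$ by the universal property of the colimit, obtain genericness by gluing local lifts, and deduce uniqueness from Lemma \ref{generic-free-unique} once the constructed generic is known to be a cover. However, there is a genuine gap at the step you call the ``first technical step''. The globularity of the shape assignment $p \mapsto B'_p$ does not follow from Proposition \ref{prop:src-tgt-equations}, which is a purely combinatorial statement about the inclusions $s_k^B$ and $t_k^B$ and says nothing about generic--free factorisations. What is actually needed is that the factorisation itself commutes with source and target inclusions, i.e.\ $B_{\hat{p}\circ \Free s_k} = \partial_k B_{\hat{p}}$ together with commutativity of the evident square; this must be carried along as an extra clause of the inductive hypothesis and proved in the coherence case. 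There the essential input is Lemma \ref{type-to-cover}: fullness of $A_p$ yields a cover $a$ of $\partial_{n-1}C_p$ with $\pr_1 A_p = \Free(s_{n-1}^{C_p})(a)$, so the source of $\coh(C_p,A_p,\tau_p)$ is the cell of $\tau_p\circ\Free(s_{n-1}^{C_p})$ at $a$, and the inductive commutation statement for $\tau_p$ then identifies its shape with $\partial_{n-1}B_{\tau_p}$. Without this strengthened hypothesis the shape map is not known to be a morphism of globular sets and Proposition \ref{pasting-batanin-trees} cannot be invoked.

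A second, more minor, issue is the induction measure. A single induction on the inductive depth of $\sigma$ is not well-founded here: for a position $p$ of dimension $l<\dim B$, the cell $\sigma_{l,V}(p)$ lives in dimension $l$ and its depth bears no relation to $\mdpth(\sigma_{\dim B})$, so the morphism $\tau_p$ you recurse into need not have smaller depth than $\sigma$. The paper repairs this with a double induction, first on $\dim B$ and then on $\mdpth(\sigma_{\dim B})$, treating globes first and reducing a general tree to its positions, each of which is a globe of dimension at most $\dim B$. With these two repairs, your construction of $\gen_\sigma$ and $\fr_\sigma$, your verification of genericness by gluing local fillers, and your covering argument all go through essentially as in the paper.
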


\begin{proof}
        Uniqueness of such factorisation is a direct consequence of the lifting property defining generics, and injectivity of morphisms between  trees~\cite[Lemma~1.3]{berger_cellular_2002}. We will construct a generic, generator-preserving factorisation for every homomorphism $\sigma : \Free\,^B\to \Free\,X$ by induction on $\dim B$ and $\mdpth \sigma_{\dim\,B}$. We will simultaneously show that the factorisation commutes with source and target inclusions; i.e.\,we have that $B_{\sigma \circ \Free\,s_k^B} = \partial_k B_\sigma$, and the following diagram commutes
        \[\begin{tikzcd}[column sep = huge, row sep = tiny]
                        {\Free\,{\partial_k B}} & {\Free\,{\partial_kB_\sigma}} \\
                        {} && {\Free\,{B'}} \\
                        {\Free\,B} & {\Free\,{B_\sigma}}
                        \arrow["{\fr_{\sigma\circ \Free\,s_k^B}}", from=1-2, to=2-3]
                        \arrow["{\gen_{\sigma\circ \Free\,s_k^B}}", from=1-1, to=1-2]
                        \arrow["{\gen_\sigma}", from=3-1, to=3-2]
                        \arrow["{\fr_\sigma}"', from=3-2, to=2-3]
                        \arrow["{\Free\,s_k^B}"', from=1-1, to=3-1]
                        \arrow["{\Free\,s_k^{B_\sigma}}"', dashed, from=1-2, to=3-2]
                \end{tikzcd}\]
        and a similar results holds for target inclusions.

        Suppose first that $B = D_n$ is a globe and that $\sigma$ corresponds to a generator. In this case, $\sigma$ must already be generator-preserving, so we may set ${\gen_\sigma = \id}$ and $\fr_\sigma = \sigma$. Commutativity of this construction with source and target inclusions follows from $\sigma\circ \Free\,s_k^{D_n}$ and $\sigma\circ \Free\,t_k^{D_n}$ also being generator-preserving morphisms out of a globe.

        Suppose now that $B = D_n$ and that $\sigma$ corresponds to a coherence cell $\coh\,(B',A,U_n\tau)$ for unique morphism $\tau : \Free\, B'\to \Free\,X$. If we let $\tilde{\sigma} : \mathbb{D}^n\to \Free\,{B'}$ the cover corresponding to the cell $\coh(B',A,\id)$, then $\sigma = \tau\tilde{\sigma}$, and we let
        \begin{align*}
                B_\sigma & = B_\tau, & \gen_\sigma & = \gen_\tau \tilde{\sigma}, & \fr_\sigma = \fr_\tau.
        \end{align*}
        To show that $\gen_\sigma$ is generic, consider a commutative square where $f, g$ are generator-preserving.
        \[\begin{tikzcd}
                        {\mathbb{D}^n} & {\Free Y} \\
                        {\Free\,B'} \\
                        {\Free\,{B_\tau}} & {\Free Z}
                        \arrow["f", from=1-2, to=3-2]
                        \arrow["g"', from=3-1, to=3-2]
                        \arrow["c", from=1-1, to=1-2]
                        \arrow["{\tilde{\sigma}}"', from=1-1, to=2-1]
                        \arrow["{\gen_\tau}"', from=2-1, to=3-1]
                        \arrow["\rho"{description}, dashed, from=2-1, to=1-2]
                        \arrow["h"{description}, dashed, from=3-1, to=1-2]
                \end{tikzcd}\]
        Since $f$ is generator-preserving, the cell corresponding to $c$ must be of the form $\coh(B,A,U_n\rho)$ for unique morphism $\rho$ making the diagram above commute. The existence of unique $h$ making the entire diagram commute follows then by $\gen_\tau$ being generic.

        When $k\ge n$, the source and target inclusions are identities, so this factorisation commutes trivially with them. let $k<n$ and consider the $k$-cell $a =\src^{(n-k-1)}(\pr_1 A)$ of $\Free X$. This is the source of a full spheres and corresponds to the morphism $\tilde{\sigma} \circ \Free\,s_k^{D_n} : \mathbb{D}^k\to \Free\,{B'}$. Therefore, by Lemma \ref{type-to-cover}, there exists unique cover $\tilde{a}:\mathbb{D}^k\to \Free\,{\partial_k B'}$ making the left square below commute.
        \[\begin{tikzcd}[column sep = huge ]
                        {\mathbb{D}^k} & {\Free\,{\partial_k B'}} & {\Free\,{\partial_k B_\tau}} \\
                        {\mathbb{D}^n} & {\Free\,{B_0}} & {\Free\,{B_\tau}} & {\Free X}
                        \arrow["{\tilde{a}}", from=1-1, to=1-2]
                        \arrow["{\gen_{\tau\circ \Free\,s_k^{B_0}}}", from=1-2, to=1-3]
                        \arrow["{\fr_{\tau\circ \Free\,s_k^{B_0}}}", from=1-3, to=2-4]
                        \arrow["{\tilde{\sigma}}", from=2-1, to=2-2]
                        \arrow["{\gen_\tau}", from=2-2, to=2-3]
                        \arrow["{\fr_\tau}", from=2-3, to=2-4]
                        \arrow["{\Free\,s_k^{D_n}}"', from=1-1, to=2-1]
                        \arrow["{\Free\,s_k^{B_0}}"', from=1-2, to=2-2]
                        \arrow["{\Free\,s_k^{B_\tau}}"', from=1-3, to=2-3]
                \end{tikzcd}\]
        The rest of the diagram commutes from the inductive hypothesis on $\tau$.
        The top row is a cover, generator-preserving factorisation of $\sigma \circ \Free\,s_k^{D_n}$. By Lemma~\ref{generic-free-unique}, it must coincide with the generic, generator-preserving factorisation, so constructed factorisation of $\sigma$ commutes with source inclusions. A similar argument shows that it commutes with target inclusions.

        Finally, let $B$ be an arbitrary Batanin tree and suppose that for every position $(l,p)\in \el(\Pos\,B)$ we have constructed the generic, generator-preserving factorisation of the morphism $\hat{p} : \mathbb{D}^l\to \Free\,X$ corresponding to the cell $\sigma_{l,V}(p)$.
        \[\begin{tikzcd}[column sep = huge]
                        {\Free\,D_l} & {\Free\,B_{\hat{p}}} & {\Free X}
                        \arrow["{\gen_{\hat{p}}}",  from=1-1, to=1-2]
                        \arrow["{\fr_{\hat{p}}}=\Free\,\fr'_{\hat{p}}", from=1-2, to=1-3]
                \end{tikzcd}\]
        Since the generic, generator-preserving factorisations commute with source and target inclusions, this factorisation is functorial in $(l,p)$. Taking colimits, we obtain morphisms
        \[\begin{tikzcd}[row sep = 0, column sep = huge]
                        \colim_{\el(\Pos\,B)} \Free\Pos\,D_k
                        \ar[r, "\gen_\sigma"] &
                        \colim_{\el(\Pos\,B)} \Free\Pos\,B_{\hat{p}} \\
                        \phantom{\colim_{\el(\Pos\,B)} \Free\Pos\,D_k}
                        \ar[r, "\fr_\sigma = \Free\,\fr_\sigma'"] &
                        \colim_{\el(\Pos\,B)} \Free X.
                \end{tikzcd}\]
        The first term is $\Free\,B$ by cocontinuity of $\Free$ and the density lemma. The third term is $\Free X$ since it is the colimit a constant diagram. The composite of the two morphisms is easily seen to be $\sigma$. Commutativity of generic, generator-preserving factorisations with source and target inclusions implies that the middle term is a colimit of the form mentioned in Proposition \ref{pasting-batanin-trees}, and so it is of the form $\Free\,{B_\sigma}$ for some Batanin tree $B_\sigma$. Moreover, this construction coincides with the one before in the case of globes, and commutes with the source and target inclusions by the last part of \ref{pasting-batanin-trees}.

        It remains to show that $\gen_\sigma$ is generic, so consider a commutative square as the one below where $f,g$ are generator-preserving
        \[\begin{tikzcd}[row sep = large]
                        {\mathbb{D}^l} & {\Free\,B} & {\Free Y} \\
                        {\Free\,{B_{\hat{\sigma}}}} & {\Free\,{B_\sigma}} & {\Free\,Z}
                        \arrow["g", from=1-3, to=2-3]
                        \arrow["f"', from=2-2, to=2-3]
                        \arrow["{\gen_\sigma}", from=1-2, to=2-2]
                        \arrow["\tau", from=1-2, to=1-3]
                        \arrow["{\Free\,i_p}", dashed, from=1-1, to=1-2]
                        \arrow["{\Free\,j_p}"', dashed, from=2-1, to=2-2]
                        \arrow["{\gen_{\hat{p}}}"', from=1-1, to=2-1]
                        \arrow["h_p"{near start}, dashed, bend left = 7, from=2-1, to=1-3]
                        \arrow["h"', dashed, from=2-2, to=1-3]
                \end{tikzcd}\]
        For every position $(l,p)$, let $i_p$ and $j_p$ the morphisms of globular sets forming the colimit cocones over $\Pos\,B$ and $\Pos\,B_\sigma$ respectively. Since $\gen_{\hat{p}}$ is generic, there exists unique lift $h_p$ to the diagram above. Uniqueness of those lifts shows that they are compatible with source and target inclusions, so they give a morphism $h$ out of the colimit. Uniqueness of $h$ follows from that of the lifts $h_p$.
\end{proof}

\begin{corollary}
        The monad $\fc^w$ has arities in $\Theta_0$.
\end{corollary}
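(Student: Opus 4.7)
The plan is to invoke the criterion of Weber~\cite{weber_pra_2004} and Berger--Melli\`es--Weber~\cite{berger_weber_2012}: a monad $T$ on a cocomplete category with a small dense subcategory $\mathcal{A}$ has arities in $\mathcal{A}$ precisely when every morphism $A \to TX$ with $A \in \mathcal{A}$ admits an essentially unique factorisation $A \to TA' \to TX$ whose first component is $T$-generic and with $A' \in \mathcal{A}$. In our setting the monad is $\fc^w$, the ambient category is $\Glob$, and the candidate subcategory of arities is $\Theta_0$; density of $\Theta_0$ in $\Glob$ is standard, since every globe $D_n$ is already a Batanin tree and the globes form the category of representables.

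First I would translate the required factorisation problem across the adjunction $\Free \dashv \Cell$. Morphisms $B \to \fc^w X = \Cell \Free X$ correspond bijectively to homomorphisms of computads $\Free B \to \Free X$; morphisms of the form $\fc^w f$ correspond, by faithfulness of $\Free$, to generator-preserving morphisms of the form $\Free f : \Free B \to \Free Y$ between free computads; and, as noted in the remark after the definition of generic, our notion of generic homomorphism matches exactly Weber's notion of $\fc^w$-generic morphism of globular sets under this transposition. Consequently, the desired (generic, free)-factorisations in $\Glob$ are precisely the transposes of the (generic, generator-preserving)-factorisations of Proposition~\ref{generic-free}.

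Given any $f : B \to \fc^w X$ with $B \in \Theta_0$, I would take its transpose $\sigma : \Free B \to \Free X$, apply Proposition~\ref{generic-free} to obtain a factorisation $\Free B \xrightarrow{\gen_\sigma} \Free B_\sigma \xrightarrow{\fr_\sigma} \Free X$ with $B_\sigma$ a Batanin tree and $\fr_\sigma = \Free \fr_\sigma'$, and then transpose back to obtain $B \to \fc^w B_\sigma \xrightarrow{\fc^w \fr_\sigma'} \fc^w X$, which is the required factorisation through an object of $\Theta_0$. Uniqueness follows from Lemma~\ref{generic-free-unique} after transposition, together with the observation that any generator-preserving morphism between computads that are both free on globular sets is itself in the image of $\Free$.

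I do not anticipate any substantial obstacle here; the real work is done in Proposition~\ref{generic-free}, and what remains is the bookkeeping translation across $\Free \dashv \Cell$ together with the verification that our notion of ``generic'' matches Weber's. The only point requiring any care is the uniqueness clause of the Weber criterion, and this is immediate from the uniqueness of generic--generator-preserving factorisations already established.
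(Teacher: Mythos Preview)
Your proposal is correct and follows essentially the same approach as the paper: both invoke the characterisation of \cite[Proposition~2.5]{berger_weber_2012}, transpose across $\Free \dashv \Cell$, and appeal to Proposition~\ref{generic-free} to produce the required (generic, free) factorisation. The paper phrases the criterion as connectedness of the factorisation category and observes that the generic factorisation furnishes an initial object there, whereas you phrase it directly as essential uniqueness of the factorisation; these are equivalent and the argument is the same.
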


\begin{proof}
        By \cite[Proposition~2.5]{berger_weber_2012}, $\fc^w$ has arities in $\Theta_0$ if and only if for every Batanin tree $B$, globular set $X$, and morphism $f : B\to \fc^wX$, a certain category of factorisations of $f$ is connected. The (generic, generator-preserving)-factorisation of the transpose $f^\dagger : \Free\,B\to \Free\,X$ corresponds to a factorisation of $f$ into an $\fc^w$-generic followed by a free morphism. This factorisation is by definition an initial object of the aforementioned factorisation category. Therefore $\fc^w$ has arities in $\Theta_0$.
\end{proof}

\begin{corollary}\label{cover-generic}
        Let $B$, $B'$ be Batanin trees. A morphism $\sigma : \Free\,B\to \Free\,{B'}$ is a cover if and only if it is generic. In particular, generics between Batanin trees are closed under composition.
\end{corollary}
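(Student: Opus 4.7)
The plan is to exploit the unique (generic, generator-preserving) factorisation $\sigma = \fr_\sigma \circ \gen_\sigma$ from Proposition~\ref{generic-free}, together with the auxiliary fact that $\gen_\sigma$ is always a cover. This auxiliary fact I would verify by induction, following the three cases of the construction in the proof of Proposition~\ref{generic-free}: in the base case where $\sigma$ corresponds to a generator, $\gen_\sigma = \id$; when $\sigma$ corresponds to a coherence cell, $\gen_\sigma = \gen_\tau \circ \tilde{\sigma}$ is a composite of covers (the first coming from a coherence cell of the form $\coh(B'',A,\id)$, the second a cover by induction); and for arbitrary $B$, $\gen_\sigma$ arises as a colimit of the $\gen_{\hat p}$, where position-wise surjectivity onto generators transfers to $\gen_\sigma$ using the explicit globular-sum description of $B_\sigma$ from Proposition~\ref{pasting-batanin-trees}.

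For the direction generic $\Rightarrow$ cover, I would observe that if $\sigma$ is itself generic then $\sigma = \id_{\Free B'} \circ \sigma$ is also a (generic, generator-preserving) factorisation; by the uniqueness clause of Proposition~\ref{generic-free}, this must agree with $\sigma = \fr_\sigma \circ \gen_\sigma$, forcing $B_\sigma = B'$, $\fr_\sigma = \id$, and $\gen_\sigma = \sigma$, so that $\sigma$ is a cover by the auxiliary fact. For the reverse direction cover $\Rightarrow$ generic, I would apply Lemma~\ref{source-of-support} to compute $\fv_k(\sigma) = \bigcup_{v \in V_*^{B_\sigma}} \fv_k(\fr_\sigma(v)) = \fv_k(\fr_\sigma)$, using that $\gen_\sigma$ is a cover; thus $\sigma$ being a cover forces $\fr_\sigma$ to be a cover. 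Since $\fr_\sigma$ is generator-preserving, Lemma~\ref{support-of-var-to-var} implies that its underlying morphism of globular sets $B_\sigma \to B'$ is surjective on positions, and combined with injectivity of morphisms between Batanin trees~\cite[Lemma~1.3]{berger_cellular_2002} this morphism is a bijection, making $\fr_\sigma$ an isomorphism of free computads. A direct check on the lifting property then shows that any composite $(\text{iso}) \circ (\text{generic})$ is again generic, whence $\sigma = \fr_\sigma \circ \gen_\sigma$ is generic.

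The final clause, that generics between Batanin trees are closed under composition, is then immediate from the equivalence with covers, since covers are closed under composition by Lemma~\ref{source-of-support}. The main delicate point is the colimit case in the inductive step establishing that $\gen_\sigma$ is a cover, where Proposition~\ref{pasting-batanin-trees} has to be used to identify the generators of $B_\sigma$ as precisely those contributed from the branches of $B$, so that surjectivity of each $\gen_{\hat p}$ onto the generators of $B_{\hat p}$ assembles into surjectivity of $\gen_\sigma$ onto the generators of $B_\sigma$.
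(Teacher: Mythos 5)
Your proposal is correct and rests on the same backbone as the paper's proof: the unique (generic, generator\-preserving) factorisation of Proposition~\ref{generic-free}, together with the observation --- stated in one line in the paper, spelled out inductively by you --- that the generic part $\gen_\sigma$ produced by that construction is always a cover. The generic $\Rightarrow$ cover direction is identical in both arguments (uniqueness of the factorisation forces $\sigma = \gen_\sigma$). For cover $\Rightarrow$ generic the two part ways slightly: the paper applies Lemma~\ref{generic-free-unique} once, to the square comparing $\sigma = \id\circ\sigma$ with $\sigma = \fr_\sigma\circ\gen_\sigma$, and concludes directly that $\sigma = \gen_\sigma$; you instead compute supports via Lemmas~\ref{source-of-support} and~\ref{support-of-var-to-var} to show that $\fr_\sigma$ must itself be a cover, hence (being generator\-preserving and monic by Berger's lemma) an isomorphism, and then invoke the easy fact that an isomorphism composed with a generic is generic. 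Your route essentially re-derives the content that Lemma~\ref{generic-free-unique} already packages, so the paper's version is shorter, but yours makes the role of supports explicit and is equally valid; both reduce the closure of generics under composition to the already established closure of covers.
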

\begin{proof}
        Suppose that $\sigma$ is a cover, and consider the following square:
        \[\begin{tikzcd}
                        {\Free\,{B}} & {\Free\,{B'}} \\
                        {\Free\,{B_\sigma}} & {\Free\,{B}}
                        \arrow["{\fr_\sigma}"', from=2-1, to=2-2]
                        \arrow["\id", from=1-2, to=2-2]
                        \arrow["\sigma", from=1-1, to=1-2]
                        \arrow["{\gen_\sigma}"', from=1-1, to=2-1]
                \end{tikzcd}\]
        Lemma \ref{generic-free-unique} implies that $\sigma = \gen_\sigma$ is generic. Conversely, suppose that $\sigma$ is generic. By uniqueness of the generic-free factorisation, we have that \mbox{$\sigma = \gen_\sigma$}. However, it follows from the explicit description of $\gen_\sigma$ in the proof of Proposition \ref{generic-free} that $\gen_\sigma$ is a cover.
\end{proof}

\subsection{Globular theories}
\label{sec:globular-theories}

Leinster's $\omega$-categories \cite{leinster_higher_2003} are algebras for the initial normalised \emph{globular operad} with a contraction.  By the equivalence of globular operads and \emph{homogeneous globular theories} established in \cite[6.6.8]{ara_sur_2010}, \cite[Proposition~1.16]{berger_cellular_2002} and \cite[Theorem~3.13]{berger_weber_2012}, we may equivalently speak of homogeneous globular theories with contractions. We equip the theory $\Theta_w$, corresponding to $\fc^w$, with a contraction. We will then show that $\Theta_w$ is the initial theory equipped with a contraction, and conclude that Leinster's operad is isomorphic to $\fc^w$. We begin by recalling the relevant notions.

\begin{definition}
        Let $F :\mathbb{G}\to \mathcal{C}$ a category under the category of globes. A \emph{globular sum} in $\mathcal{C}$ is a colimit of a diagram of the form
        \[\begin{tikzcd}
                        {F([n_0])} && \dots && {F([n_k])} \\
                        & {F([m_1])} && {F([m_k])}
                        \arrow["{F(t)}", from=2-2, to=1-1]
                        \arrow["{F(s)}"', from=2-2, to=1-3]
                        \arrow["{F(s)}"', from=2-4, to=1-5]
                        \arrow["{F(t)}", from=2-4, to=1-3]
                \end{tikzcd}\]
        where $n_i$ and $m_j$ are elements of $\mathbb{G}$.
\end{definition}

\begin{example}
        Viewing $\Glob$ as a category under $\mathbb{G}$ using the Yoneda embedding, Remark \ref{rem:trees-are-sums} explains that globular sums coincide with Batanin trees.
\end{example}

\begin{definition}[{\cite[Definition~1.5]{berger_cellular_2002}, \cite[2.2.6]{ara_sur_2010}}]
        Let $\Theta_0$ be the category with objects Batanin trees and arrows the morphisms between their globular sets of positions. A \emph{globular theory} is a category $\Theta_A$, together with a bijective on objects, faithful functor $j_A:\Theta_0 \hookrightarrow \Theta_A$ preserving globular sums. A \emph{morphism of globular theories} $i : \Theta_A\to \Theta_B$ is a functor such that $j_B = j_A i$.
\end{definition}

\begin{example}
        Every faithful monad $T$ on $\Glob$ induces a globular theory $\Theta_T$, such that $\Theta_T(B,B') = T\text{-}\operatorname{Alg}(TB, TB')$. In particular, the free $\omega$-category monad $\fc^w$ defines a globular theory $\Theta_w = \Theta_{\fc^w}$.
\end{example}

\begin{definition}[{See \cite[Definition~1.15]{berger_cellular_2002}, \cite[2.7.1]{ara_sur_2010} and \cite[Definition~3.9]{berger_weber_2012}}]
        An \emph{immersion} in a globular theory $\Theta_A$ is a morphism of $\Theta_0$. We say that a morphism $f$ of $\Theta_A$ is \emph{homogeneous} if whenever $f = ig$ for some immersion $i$, we have that $i = \id$. We say that $\Theta_A$ is \emph{homogeneous} if every morphism factors uniquely as $f = ic$ with $i$ an immersion and $c$ a cover.
\end{definition}

In light of Proposition \ref{morphisms-are-functors}, the category $\Theta_w$ may be identified with the full subcategory of $\Comp$ whose objects are free on a Batanin tree. Since every morphism between the globular sets of positions of Batanin trees is monic (see \cite[Lemma~1.3]{berger_cellular_2002}), the two notions of immersions coincide. Furthermore, Proposition~\ref{generic-free} and Corollary \ref{cover-generic} imply that the two notions of covers also coincide. Hence, we have the following result:

\begin{proposition}
        The globular theory $\Theta_w$ is \emph{homogeneous}.
\end{proposition}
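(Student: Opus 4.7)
The plan is to assemble the identifications made in the paragraph immediately preceding the proposition with the generic-free factorisation of Proposition~\ref{generic-free}. Concretely, given any morphism $\sigma : \Free B \to \Free B'$ in $\Theta_w$, I would first apply Proposition~\ref{generic-free} to obtain its unique factorisation $\sigma = \fr_\sigma \circ \gen_\sigma$ through $\Free B_\sigma$ for some Batanin tree $B_\sigma$; crucially, the middle object again lies in $\Theta_w$, so this is a factorisation internal to the subcategory.

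Next I would translate this into a (cover, immersion) factorisation in the globular-theoretic sense. By Corollary~\ref{cover-generic}, the generic morphism $\gen_\sigma$ is exactly a cover of computads; identifying $\Theta_w$ as the full subcategory of $\Comp$ on objects of the form $\Free B$, this matches the computadic notion of cover, which in turn coincides with the abstract notion (morphisms admitting no non-trivial immersion on the left) via the orthogonal factorisation system already established on $\Comp$. On the other hand, the generator-preserving morphism $\fr_\sigma$ corresponds under full faithfulness of $\Free^{\var}$ on globular sets to a morphism $B_\sigma \to B'$ in $\Glob$, hence to a morphism of $\Theta_0$, which is by definition an immersion in $\Theta_w$. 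Thus $\sigma$ factors as an immersion after a cover.

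For uniqueness, suppose $\sigma = i \circ c$ is any such factorisation in $\Theta_w$, with $i$ an immersion and $c$ a cover. Then $i$ is generator-preserving and $c$ is generic (again by Corollary~\ref{cover-generic}), so this is also a (generic, free) factorisation of $\sigma$; uniqueness in Proposition~\ref{generic-free}, together with the rigidity expressed by Lemma~\ref{generic-free-unique}, forces $c = \gen_\sigma$ and $i = \fr_\sigma$.

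I do not anticipate a serious obstacle: the heavy lifting is all in Proposition~\ref{generic-free} and Corollary~\ref{cover-generic}, and what remains is a short matching-of-definitions. The only point that warrants a moment of care is to confirm that the abstract notion of ``cover'' used in the definition of homogeneity (the class complementary to immersions in an orthogonal factorisation) really agrees with the paper's notion in terms of support, but this is precisely the content of the (cover, immersion) orthogonal factorisation system already set up in the previous section.
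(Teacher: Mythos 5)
Your argument is correct and takes essentially the same route as the paper: the paper likewise deduces homogeneity by identifying globular-theoretic immersions with generator-preserving monomorphisms (via Berger's injectivity lemma) and globular-theoretic covers with generics/computadic covers (via Proposition~\ref{generic-free} and Corollary~\ref{cover-generic}), and then reads off existence and uniqueness of the factorisation from the generic--free factorisation. No changes needed.
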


Suppose that $\Theta_s$ is the homogeneous globular theory whose arrows are morphisms between the \emph{strict} $\omega$-categories generated by Batanin trees. More concretely, for each Batanin tree $B$ with $\dim B\le n$, there is a unique cover $c_n^B : D_n \to B$. We follow \cite[2.7.6]{ara_sur_2010} and say that that a globular theory $\Theta_A$ is \emph{homogeneous over $\Theta$} when it comes equipped with a morphism $\Theta_A \to \Theta_s$ that preserves (and reflects) covers.

\begin{remark}\label{homogeneous-globular-theories-are-globular-operads}
        By a result of Ara~\cite[Theorem 6.6.8]{ara_sur_2010} the category of homogeneous globular theories over $\Theta_s$ is equivalent to the category of \emph{globular operads} in the sense of Batanin~\cite{batanin_computads_1998}. (See also \cite[Proposition 1.16]{berger_cellular_2002}, \cite[Theorem 3.13]{berger_weber_2012}.) Given a homogeneous globular theory $\Theta_A$ over $\Theta_s$, the corresponding globular operad $O_A$ is defined so that, for each Batanin tree $B$ and $n\in \N$, an operation of $O_{A,n}(B)$ corresponds exactly to a cover $\sigma : D_n \to B$ in $\Theta_A$.
\end{remark}

\begin{proposition}
        The globular theory $\Theta_w$ is homogeneous over $\Theta_s$.
\end{proposition}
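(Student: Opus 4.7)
The plan is to construct a monad morphism $\pi : \fc^w \Rightarrow \fc^s$ on $\Glob$, verify it induces a morphism of globular theories $\Theta_w \to \Theta_s$, and show this functor preserves and reflects covers.

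For each globular set $X$, I would define $\pi_X : \fc^w X \to \fc^s X$ by recursion on the depth of cells, using the framework of Section~\ref{sec:well-foundedness}. On generators, set $\pi_X(\var v) = \eta^s_X(v)$; on a coherence cell $\coh(B,A,\tau)$ of dimension $n$, with transpose $\tau^\dagger : B \to \fc^w X$, set
\[
\pi_X(\coh(B,A,\tau)) \;=\; \mu^s_X\bigl((B,\, \pi_X \circ \tau^\dagger)\bigr),
\]
viewing $(B, \pi_X \circ \tau^\dagger)$ as an element of $\fc^s_n \fc^s X$ (using $\dim B \le n$). Using the definition of $\ty_n$ on coherence cells and Proposition~\ref{prop:src-tgt-equations} on the source and target inclusions $s_k^B, t_k^B$, one verifies by structural induction that $\pi_X$ is a globular morphism, natural in $X$, and that $\pi \circ \eta^w = \eta^s$. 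Compatibility with multiplications, $\pi \circ \mu^w = \mu^s \circ \fc^s\pi \circ \pi\fc^w$, follows from another induction, using that $\mu^w$ is induced by the counit $\varepsilon$ of $\Free \dashv \Cell$ (so a coherence in $\fc^w\fc^w X$ unfolds into a nested composite) together with the associativity of substitution of pasting diagrams underlying $\mu^s$ (Proposition~\ref{pasting-batanin-trees}).

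The monad morphism $\pi$ then induces an identity-on-objects functor $\Theta_w \to \Theta_s$ sending a morphism $\sigma : \Free B \to \Free B'$, equivalently $\sigma^\dagger : B \to \fc^w B'$, to $\pi_{B'} \circ \sigma^\dagger : B \to \fc^s B'$. Unit compatibility of $\pi$ ensures that this functor extends the identity on $\Theta_0$, so it is a morphism of globular theories.

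To show preservation and reflection of covers, I would prove by induction on depth that for every cell $c \in \fc^w X$ the set of positions of $X$ appearing in the pasting $\pi_X(c) \in \fc^s X$ coincides with the support $\fv(c)$. The base case $c = \var v$ is immediate; for $c = \coh(B,A,\tau)$, the positions used in $\mu^s_X((B, \pi_X \circ \tau^\dagger))$ are, by the definition of $\mu^s$ via Proposition~\ref{pasting-batanin-trees}, the union over $p \in \Pos(B)$ of those used in $\pi_X(\tau_V(p))$, which by the induction hypothesis equals $\bigcup_p \fv(\tau_V(p)) = \fv(c)$. Since a morphism $B \to B'$ in $\Theta_w$ is a cover iff its support is all of $B'$, and a morphism $B \to \fc^s B'$ in $\Theta_s$ is a cover iff its pasting uses every position of $B'$, the equality of supports yields preservation and reflection simultaneously.

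The main obstacle will be verifying the multiplication axiom for $\pi$: this is the one place where the nested, inductive structure of weak coherence cells must be shown to be consistent with the associative, flat substitution of pasting diagrams encoded by $\mu^s$. Once this is handled, the cover argument is a direct induction on depth.
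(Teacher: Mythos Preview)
Your approach is correct in outline and genuinely different from the paper's. The paper does not construct a monad morphism $\fc^w \Rightarrow \fc^s$; instead it defines $S : \Theta_w \to \Theta_s$ directly via the (cover, immersion) factorisation already established for $\Theta_w$. Since every cover $\mathbb{D}^n \to \Free B$ forces $\dim B \le n$ (Proposition~\ref{generic-free}), and $\Theta_s$ has a \emph{unique} cover $c_n^B : D_n \to B$ in that situation, the paper simply declares $S(\sigma) = c_n^B$ on such covers, extends to general covers by globular sums, sends immersions to themselves, and glues along the factorisation. This buys a shorter proof: no naturality or multiplication axiom needs to be checked for a monad map, only functoriality of $S$. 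Your route, by contrast, is more structural and would yield the Cartesian monad map to $\fc^s$ that underlies the operadic picture in Remark~\ref{homogeneous-globular-theories-are-globular-operads}; but the workload is comparable, since the paper's ``easily verified functoriality'' hides essentially the same compatibility you isolate as the multiplication axiom.

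There is one step you are underestimating. You claim globularity of $\pi$ follows from the definition of $\ty_n$ on coherence cells together with Proposition~\ref{prop:src-tgt-equations}. That is not enough: the source of $\coh(B,A,\tau)$ is $\Cell(\tau)(\pr_1 A)$, which depends on the full sphere $A$, whereas your formula $\mu^s_X((B,\pi_X\circ\tau^\dagger))$ forgets $A$ entirely. To show $\pi(\src c) = \src\,\pi(c)$ you must use that $\pr_1 A$ factors through $\Free(s_{n-1}^B)$ as the image of a cell covering $\partial_{n-1}B$; this is exactly Lemma~\ref{type-to-cover}, and it is what makes $\pi$ collapse every choice of full sphere over $B$ to the single strict composite over $\partial_{n-1}B$. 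Once you invoke that lemma the inductive argument goes through, and your support-matching argument for preservation and reflection of covers is then correct as written.
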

\begin{proof}
        We define an identity-on-objects functor $S : \Theta_w \to \Theta_s$. On immersions we set $S \Free i = i$. Suppose that $\sigma : \mathbb{D}^n \to \Free\,B$ is a cover in $\Theta_w$. It follows from Proposition \ref{generic-free} that $\dim B \leq n$. Hence, we define $S \sigma = c^B_n$.  Since every cover $B \to B'$ in $\Theta_w$ is a globular sum of covers of the form $\mathbb{D}^k \to B'$, and covers are closed under globular sums in both $\Theta_w$ and $\Theta_s$, these data suffice to define $S$ on all covers. Since every morphism in $\Theta_w$ factors uniquely as a cover followed by an immersion, this suffices to define $S$ on all arrows in $\Theta_w$. It is easily verified that this construction is functorial, and preserves and reflects covers.
\end{proof}

\begin{definition}
        A homogeneous globular theory over $\Theta_s$ is \emph{normalised} when there is a unique cover $D_0 \to D_0$, namely $\id_{D_0}$. A \emph{contraction} on a normalised homogeneous globular theory consists of, for each $n\in \N$, each Batanin tree $B$ with $\dim B \leq n+1$, and each pair of covers $c, d : D_{n} \to \partial_{n} B$ that are parallel, i.e.
        \begin{align*}
                c \circ s_{n-1}^{D_{n}} & = d \circ s_{n-1}^{D_{n}} & \text{and} &  &
                c \circ t_{n-1}^{D_{n}} = d \circ t_{n-1}^{D_{n}},
        \end{align*}
        a choice of cover $l^{c, d} : D_{n+1} \to B$ such that
        \begin{align*}
                l^{c, d} \circ  s_{n}^{D_{n+1}} = s^B_{n} \circ c &  & \text{and} &  & l^{c, d} \circ  t_{n}^{D_{n+1}} = t^B_{n} \circ d.
        \end{align*}
\end{definition}

\begin{remark}
        Contractibility is usually viewed as a property of globular operads. Under the equivalence of Remark \ref{homogeneous-globular-theories-are-globular-operads}, the contractions described here correspond to the contractions of globular operads described by Leinster \cite{leinster_higher_2003}.
\end{remark}

In order to equip $\Theta_w$ with a contraction and show that it is initial among globular theories with a contraction, we will need the following characterisation of full spheres.

\begin{proposition}\label{full-types-as-covers}
        Let $A$ an $n$-sphere of a Batanin tree $B$. Then $A$ is full if and only if there exist $n$-cells $a,b$ that cover $\partial n B$ such that
        \begin{align*}
                \pr_1 A & = \fc^w(s_n^B)(a), &
                \pr_2 A & = \fc^w(s_n^B)(b).
        \end{align*}
        Moreover, if $\dim B\le n+1$, this is equivalent to the existence of an $(n+1)$-cell of sphere $A$ that covers $B$.
\end{proposition}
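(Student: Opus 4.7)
The forward implication of the first statement is Lemma~\ref{type-to-cover}. For the converse, I would proceed by induction on $n$. In the base case $n = 0$, the tree $\partial_0 B$ has the single position $\obpos$, forcing $a = b = \obpos$, so that $\pr_1 A = s_0^B(\obpos)$ and $\pr_2 A = t_0^B(\obpos)$ are the unique source and target boundary $0$-positions of $B$, which is precisely the condition for $A$ to be full.

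For the inductive step, the support conditions $\fv(\pr_1 A) = \partial_n^s(B)$ and $\fv(\pr_2 A) = \partial_n^t(B)$ follow from Lemma~\ref{source-of-support} together with Lemma~\ref{support-of-var-to-var} applied to the generator-preserving morphism $\Free s_n^B$, using the identification from Proposition~\ref{prop:boundary-positions-generation} of the image of $s_n^B$ in dimension $n$ with $\partial_n^s(B)$. For fullness of the underlying $(n-1)$-sphere $\ty(\pr_1 A) = \ty(\pr_2 A)$, I would apply naturality of $\ty$ to rewrite $\pr_1 \ty(\pr_1 A) = \fc^w(s_n^B)(\src a)$, then combine the factorization $s_n^B \circ s_{n-1}^{\partial_n B} = s_{n-1}^B$ from Proposition~\ref{prop:src-tgt-equations} with the injectivity of $\fc^w(s_n^B)$ (since $\Free s_n^B$ is an immersion, hence a monomorphism, and $\Cell$ preserves monomorphisms as a right adjoint) to reduce the problem to an auxiliary lemma: for every $n$-cell $c$ of $\Free B'$ with $\dim B' \le n$ that covers $B'$, the cells $\src c$ and $\tgt c$ factor as $\fc^w(s_{n-1}^{B'})(c_s)$ and $\fc^w(t_{n-1}^{B'})(c_t)$ respectively, with $c_s, c_t$ covering $\partial_{n-1} B'$. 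Applying the inductive hypothesis to $(\ty(\pr_1 A), c_s, c_t)$ then closes the main claim.

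I would prove the auxiliary lemma by structural induction on $c$. When $c = \var v$, the covering condition in dimension $n$ forces $v$ to be the unique $n$-dimensional position of $B'$; using that positions of Batanin trees of positive dimension have distinct source and target (a structural fact provable by induction on the position using its $\inl/\inr$ representation), the cells $\src v$ and $\tgt v$ are the unique source and target boundary $(n-1)$-positions of $B'$, so the required witnesses are $c_s = \var w_s$ and $c_t = \var w_t$ for the unique preimages $w_s, w_t$ under $s_{n-1}^{B'}$ and $t_{n-1}^{B'}$; their covering property in lower dimensions follows, via the globularity identities $\src \src v = \src \tgt v$ and $\tgt \src v = \tgt \tgt v$, from the covering hypothesis on $c$. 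When $c = \coh(B'', A'', \tau)$, the inner sphere $A''$ is full by the definition of the coherence constructor, so Lemma~\ref{type-to-cover} yields $\pr_1 A'' = \fc^w(s_{n-1}^{B''})(\tilde a)$ with $\tilde a$ covering $\partial_{n-1} B''$, giving $\src c = \Cell(\tau \circ \Free s_{n-1}^{B''})(\tilde a)$; one then uses the cover hypothesis on $\tau$ (equivalent to $c$ covering $B'$) to verify the support condition required to lift through the immersion $\Free s_{n-1}^{B'}$ by Lemma~\ref{immersion-lifting}, and symmetrically for $\tgt c$.

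The forward direction of the second statement is witnessed by $\coh(B, A, \id)$: its sphere is $A$ by the boundary formula for coherences, and it covers $B$ because $\fv(\coh(B, A, \id)) = \bigcup_p \fv(\var p)$ contains every position of $B$. The converse follows by applying the auxiliary lemma to the given $(n+1)$-cell, which exhibits the cover presentation required by the first part. The main obstacle I anticipate is the coherence case of the auxiliary lemma, specifically verifying that the $(n-1)$-support of $\Cell(\tau \circ \Free s_{n-1}^{B''})(\tilde a)$ lies in $\partial_{n-1}^s(B')$; this seems to require combining the fullness of $A''$, which guarantees $\fv_{n-1}(\pr_1 A'') = \partial_{n-1}^s(B'')$, with the cover condition on $\tau$ in a way that is not an immediate consequence of either hypothesis alone, and may benefit from an intermediate structural statement ensuring that $\tau$ sends source boundary positions of $B''$ to cells supported on the source boundary of $B'$.
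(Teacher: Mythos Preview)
Your overall architecture matches the paper's: forward implication via Lemma~\ref{type-to-cover}, the converse by induction on $n$, and an auxiliary lemma (your statement that $\src c$ and $\tgt c$ factor through the boundary inclusions when $c$ is a cover) playing the role of the paper's ``moreover'' clause in the induction. The reduction of fullness of the underlying $(n-1)$-sphere to the auxiliary lemma, via the identities of Proposition~\ref{prop:src-tgt-equations}, is also the paper's move.

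The gap is exactly where you locate it: the coherence case of your auxiliary lemma. The problem is structural. You set up a structural induction on $c$, but in the case $c=\coh(B'',A'',\tau)$ you never invoke the inductive hypothesis on a smaller cell; instead you try to lift $\src c$ through $\Free s_{n-1}^{B'}$ directly using Lemma~\ref{immersion-lifting}, which requires the support inclusion $\fv_{n-1}(\src c)\subseteq\partial_{n-1}^s(B')$. That inclusion is the whole difficulty, and neither the cover hypothesis on $\tau$ nor the fullness of $A''$ yields it directly. Your proposed intermediate statement---that $\tau$ sends source boundary positions of $B''$ to cells supported on the source boundary of $B'$---is not the right formulation and is itself hard to prove by induction, because the cells $\tau_V(q)$ need not cover anything, so your auxiliary lemma does not apply to them.

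The paper breaks the deadlock by replacing your auxiliary lemma with a weaker but more flexible claim that drops the covering hypothesis entirely: for an \emph{arbitrary} $(n{+}1)$-cell $c$ of $\Free B$, every source boundary position in $\fv_k(c)$ already lies in $\fv_k(\src c)$. This claim does support a structural induction: in the coherence case one applies the hypothesis to the cells $\tau_V(q)$ (which have lower depth) and walks along the positions of $B''$ from targets to sources until reaching a source boundary position. Separately, the paper uses the generic--free factorisation of $\src c$ to argue that $\fv_n(\src c)$ cannot contain any non-source-boundary $n$-position (since $B_{\src c}$ has dimension at most $n$ and hence no two parallel top-dimensional positions). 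The two halves together give exactly the support equality needed for Lemma~\ref{immersion-lifting}. Reformulating your auxiliary lemma along these lines is the missing ingredient.
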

\begin{proof}
        Let $n\in \N$ and suppose that for all $m<n$ and all $m$-spheres of some Batanin tree, the proposition holds. Let $B$ a Batanin tree and $A$ a full $n$-sphere of it. The existence of covers $a, b$ of ${\partial_n B}$ has then been shown in Lemma \ref{type-to-cover}. When $\dim B\le n+1$, a cover of $B$ of sphere $A$ is given by $\coh(B,A,\id)$.

        Suppose now $A$ is a sphere for which such $a, b$ exist. If $n=0$, then $A$ is full by definition of full $0$-spheres, so let $n>0$. The $n$-cell $a$ covers $\partial_n B$, so by the inductive hypothesis, its boundary is full and there exist cells $a',b'$ that cover ${\partial_{n-1}B} = $ such that
        \begin{align*}
                \src a & = \fc^w(s_{n-1}^{\partial_n B})(a'), &
                \tgt a & = \fc^w(t_{n-1}^{\partial_n B})(b').
        \end{align*}
        By the inductive hypothesis, we conclude that the boundary of $\pr_1A$ must be full, since
        \begin{align*}
                \src\,(\pr_1A) & = \fc^w(s_n^Bs_{n-1}^{\partial_n B})(a') = \fc^w(s_{n-1}^B)(a'), \\
                \tgt\,(\pr_1A) & = \fc^w(s_n^Bt_{n-1}^{\partial_n B})(b') = \fc^w(t_{n-1}^B)(b').
        \end{align*}
        The $n$-dimensional supports of $\pr_1 A$ and $\pr_2 B$ must be those of $\Free\,s_n^B$ and $\Free\,t_n^B$ respectively, hence the source and target boundary $n$-positions. Therefore, the sphere $A$ is full.

        Let now again $n\in\N$ arbitrary, $B$ a tree of dimension at most $n+1$ and $c\in \Cell_{n+1}(\Free\,B)$ a cover. It remains to show that there exist covers $a,b$ of ${\partial_n B}$ such that
        \begin{align*}
                \src\,c & = \fc^w(s_n^B)(a) & \tgt\,c & = \fc^w(t_n^B)(b).
        \end{align*}
        It follows from Lemmas \ref{immersion-lifting} and \ref{support-of-var-to-var} that existence of those cells is equivalent to the $k$-dimensional support of $\src\,c$ and $\tgt\,c$ being precisely those of $\Free\,s_n^B$ and $\Free\,t_n^B$ respectively for all $k\in \N$.

        We can easily see that it suffices for the $k$-dimensional support of $\src\,c$ to contain all source boundary $k$-positions for all $k\le n$, and similarly for $\tgt\,c$. By Proposition \ref{prop:boundary-positions-generation} and Lemma \ref{source-of-support}, if this is the case then the $k$\-dimensional support of $\src\,c$ contain those of $\Free\,s_n^B$ for all $k\in\N$. Since for $k<n$, the support of $\Free\,s_n^B$ contain all $k$-positions, it suffices to show that the $n$-dimensional support of $\src\,c$ do not contain any positions that are not source boundary. To see that consider the generic, generator-preserving factorisation
        \[\begin{tikzcd}
                        \mathbb{D}^n \arrow[r, "\gen_a", two heads] &
                        \Free\,{B_a} \ar["\fr_a", r, tail] &
                        \Free\,B
                \end{tikzcd}\]
        of the homomorphism corresponding to the cell $a$ and recall that $\dim B_a \le n$ and that $\fc_a$ must be an immersion. Since every position is parallel to a source boundary one, if the $n$-dimensional support of $\src\,c$ contained some non-source boundary $n$-position, they would contain two parallel $n$-positions. Then by the factorisation above, the Batanin tree $B_a$ would have two non-parallel top dimensional positions, which we can see inductively is impossible.

        We are left to show that the $k$-dimensional support of $\src\,c$ contain all source boundary $k$-positions for all $k\le n$ when $c\in \Cell_{n+1}(\Free\,B)$ is a cover, and a similar statement for $\tgt\,c$. We will instead prove the following stronger claim: If $c\in \Cell_{n+1}(\Free\,B)$ is an arbitrary cell and $p\in \partial^s_k(B)$ for $k\le n$ is in the support of $c$, then it is in the support of $\src\,c$. We will do so by induction on the cell $c$.

        If $c = \var\,p'$ is a generator, then its source and target are parallel generators as well so for $k<n$,
        \[\fv_k(c) = \fv_k(\src\,c) = \fv_k(\tgt\,c).\]
        Moreover, $\tgt\,p'$ can not be source boundary, so the only $n$-position in the support of $c$ that may be source boundary is $\src\,p' \in \fv_n(\src\,c)$. Therefore, the statement holds for generators.

        Suppose now that $c = \coh(B',A',\tau)$ is a coherence cell and fix $k\le n$. Then by definition of the support of $c$, we are left to show that for all $(l,q)\in \el(\Pos\,B')$,
        \[ \fv_k(\tau_{l,V}q) \cap \partial_k^s B \subseteq \fv_k(\src\,c). \]
        As the sphere $A'$ is full, the support of $\pr_1 A'$ must be those of $s_n^{B'}$, so by Lemma~\ref{source-of-support},
        \[  \fv_k(\src\,c) = \bigcup_{\substack{l\in\N \\ q\in \fv_l(\Free\,s_n^{B'})}} \fv_k(\tau_{l,V}q) \]
        Proposition \ref{prop:boundary-positions-generation} shows that $\fv_l(\Free\,s_n^{B'})$ contains all $l$-positions for $l<n$ and exactly the source boundary $n$-positions of $B'$. Hence, we are left to show the inclusion above when $l = n+1$, or when $l = n$ and $q$ is not source boundary.

        Let $q_0\in \Pos_n(B')$ not source boundary. Then there exists some position $q\in \Pos_{n+1}(B')$ with target $q_0$. Let $q_1\in \Pos_n(B')$ the source of $q$. By induction, we may assume that the claim holds for the cell $\tau_{V,l}(q)$, so
        \[  \partial_k^sB \cap \fv_k(\tau_{n,V}q_0) \subseteq \partial_k^sB \cap \fv_k(\tau_{n+1,V}q) \subseteq \cap \fv_k(\tau_{n,V}q_1). \]
        If $q_1$ is source boundary, then we are done. Otherwise, we may repeat this process to get a position $q_2$ and so forth. This process is guaranteed to terminate in finitely many steps, since the positions of a Batanin tree are well-ordered and $p_1<p_0$ \cite[Section~4]{weber_pra_2004}. Finally, let $q'\in\Pos_{n+1}(B')$, then
        \[  \partial_k^s B \cap \fv_k(\tau_{V,n+1}q') \subseteq \partial_k^s B \cap \fv_k(\tau_{V,n}(\src q)) \subseteq \fv_k(\src\,c).   \]
        This shows that the claim also holds for coherence cells.
\end{proof}

\begin{proposition}
        The globular theory $\Theta^w$ can be equipped with a contraction.
\end{proposition}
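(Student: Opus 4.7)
The plan is to directly construct the lift $l^{c,d}$ as a coherence cell out of a full sphere built from $c$ and $d$, using Proposition \ref{full-types-as-covers} as the key tool. The representability of $\Cell$ (Corollary \ref{cor:representability-of-cells}) lets me identify the covers $c, d : D_n \to \partial_n B$ with $n$-cells $\tilde{c}, \tilde{d} \in \Cell_n(\Free(\partial_n B))$ that cover $\partial_n B$. I then plan to form the $n$-sphere
\[
A = \bigl(\ty_n(\fc^w(s_n^B)(\tilde{c})),\ \fc^w(s_n^B)(\tilde{c}),\ \fc^w(t_n^B)(\tilde{d})\bigr)
\]
of $\Free B$, and take $l^{c,d}$ to be the morphism $D_{n+1}\to B$ corresponding to the coherence cell $\coh(B, A, \id)$.

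First I would verify that $A$ is a well-defined sphere, i.e.\ that $\fc^w(s_n^B)(\tilde{c})$ and $\fc^w(t_n^B)(\tilde{d})$ are parallel. Under the Yoneda translation, the source of $\fc^w(s_n^B)(\tilde{c})$ corresponds to the morphism $s_n^B \circ c \circ s_{n-1}^{D_n}$ and the source of $\fc^w(t_n^B)(\tilde{d})$ to $t_n^B \circ d \circ s_{n-1}^{D_n}$. The parallel hypothesis on $c, d$ makes $c \circ s_{n-1}^{D_n} = d \circ s_{n-1}^{D_n}$; Proposition \ref{prop:src-tgt-equations}(4) then tells me that $s_n^B$ and $t_n^B$ agree on all positions of dimension less than $n$, and the common source lands in such positions, so the two composites coincide. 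The analogous argument using $t_{n-1}^{D_n}$ handles the targets.

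Next I would apply Proposition \ref{full-types-as-covers}: since both $\pr_1 A$ and $\pr_2 A$ are obtained as $\fc^w(s_n^B)$ and $\fc^w(t_n^B)$ applied to cells that cover $\partial_n B$, the sphere $A$ is full. Because $\dim B \le n+1$, the same proposition supplies an $(n+1)$-cell with boundary $A$ that covers $B$; the canonical choice is $\coh(B, A, \id)$, which is a cover since the third component is an identity. Defining $l^{c,d}$ as the morphism corresponding to this cell gives a cover in $\Theta_w$ of the required shape.

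The final step is to check the two compatibility equations. Via the Yoneda identification, $l^{c,d} \circ s_n^{D_{n+1}}$ is the morphism corresponding to the source of the cell $\coh(B, A, \id)$, which is precisely $\pr_1 A = \fc^w(s_n^B)(\tilde{c})$; unfolding under the equivalence, this is exactly $s_n^B \circ c$. The target equation follows identically using $\pr_2 A$ and $\tilde{d}$. The only delicate step is the parallelism verification, but everything there reduces to Proposition \ref{prop:src-tgt-equations}; no choice is involved in the construction beyond the canonical one, so naturality-type issues do not arise. The main conceptual point is simply that Proposition \ref{full-types-as-covers} turns the contraction condition into a statement about full spheres, which by construction is trivially satisfied.
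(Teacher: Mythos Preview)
Your proposal is correct and follows essentially the same approach as the paper: build a full sphere from $\fc^w(s_n^B)(\tilde c)$ and $\fc^w(t_n^B)(\tilde d)$ using Proposition~\ref{full-types-as-covers}, and take $l^{c,d}$ to be the morphism classified by $\coh(B,A,\id)$. The paper's proof is terser and leaves the parallelism check and the invocation of Proposition~\ref{full-types-as-covers} implicit, whereas you spell these out; the only small omission is that you do not mention normalisation of $\Theta_w$, which the paper dismisses as clear (the unique cover $D_0\to D_0$ is the identity since $\Cell_0$ is the identity functor).
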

\begin{proof}
        The globular theory $\Theta^w$ is clearly normalised. We now equip $\Theta^w$ with a contraction. Suppose that $n >0$, and that $B$ is a Batanin tree with \mbox{$\dim B \leq n$}.
        Let $c, d : \mathbb{D}^{n-1}\to \Free\,{\partial B}$ parallel pair of covers.
        Then $\Free(s_n^B)(c)$ and $\Free(t_n^B)(d)$ constitute a full $(n-1)$-sphere $A$ of $B$. We define $l^{c, d} : D_{n} \to B$ to be the morphism corresponding to the cell $\coh(B, A, \id)$. This choice satisfies the required properties by construction.
\end{proof}

The following result now follows immediately from this definition together with the proof of Proposition \ref{generic-free}.

\begin{proposition}\label{pure-factorisation}
        Suppose that $X$ is a globular set. Suppose that $n > 0$, and that $\sigma : \mathbb{D}^n \to \Free X$  is a morphism of computads. Then either $\sigma$ is generator-preserving, or $\sigma$ can be uniquely written as a composite
        \[
                \begin{tikzcd}
                        \mathbb{D}^n \ar[r, "l^{c,d}"] & \Free\,{B'} \ar[r, "\sigma'"] & \Free X
                \end{tikzcd}
        \]
        for some unique covers $c, d$, some unique Batanin tree $B'$, and some unique morphism $\sigma'$ such that $\mdpth \sigma' < \mdpth \sigma$.
\end{proposition}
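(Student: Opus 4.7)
The plan is to identify $\sigma$ with its corresponding $n$-cell $c_\sigma \in \Cell_n(\Free X)$ via the representability isomorphism of Corollary~\ref{cor:representability-of-cells}, and then case-split on the inductive form of $c_\sigma$. Since the constructors $\var$ and $\coh$ exhaust $\Cell_n(\Free X)$ and are disjoint, this case split is well-founded and the two cases will correspond to the two alternatives in the proposition.

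In the variable case $c_\sigma = \var v$, the morphism $\sigma$ is generator-preserving by definition. To rule out a factorisation of the second form, I would observe that any composite $\sigma' \circ l^{c,d}$ corresponds by functoriality of $\Cell_n$ to a cell of the shape
\[
\Cell_n(\sigma')\big(\coh(B', A, \id)\big) = \coh(B', A, U_n \sigma'),
\]
which is a coherence cell, not a variable, so the two possibilities are mutually exclusive.

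In the coherence case $c_\sigma = \coh(B', A, \tau)$, injectivity of the $\coh$ constructor determines $B'$, the full $(n-1)$-sphere $A$ and the $n$-homomorphism $\tau : \Free_n B' \to U_n \Free X$ uniquely from $\sigma$; in particular $\dim B' \le n$. I would then apply Lemma~\ref{type-to-cover} (the uniqueness half of Proposition~\ref{full-types-as-covers}) to produce unique covers $c, d : \mathbb{D}^{n-1} \to \Free(\partial_{n-1} B')$ with
\[
\pr_1 A = \fc^w(s_{n-1}^{B'})(c), \qquad \pr_2 A = \fc^w(t_{n-1}^{B'})(d),
\]
so that $l^{c,d} : \mathbb{D}^n \to \Free B'$ is precisely the morphism corresponding to $\coh(B', A, \id)$. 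Because $\dim B' \le n$, the adjunction $\Sk_n \dashv U_n$ from Section~\ref{sec:infinitecomputads} extends $\tau$ to a unique morphism $\sigma' : \Free B' = \Sk_n \Free_n B' \to \Free X$ in $\Comp$ with $U_n \sigma' = \tau$. A direct computation using the recursive definition of $\Cell_n$ on morphisms then yields
\[
\Cell_n(\sigma' \circ l^{c,d}) = \Cell_n(\sigma')\big(\coh(B', A, \id)\big) = \coh(B', A, \tau) = c_\sigma,
\]
which gives the desired factorisation. The depth inequality $\mdpth \sigma' < \mdpth \sigma$ falls out of the formulas in Section~\ref{sec:well-foundedness}: under the representability bijection, $\sigma$ encodes the cell $\coh(B',A,\tau)$ of cell-depth $\mdpth(\tau)+1$, while $\sigma'$ is built from the data of $\tau$ alone and hence has strictly smaller morphism-depth than $\sigma$.

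Uniqueness in the coherence case is inherited transparently: $B'$, $A$ and $\tau$ are fixed by $\sigma$ via injectivity of $\coh$; the covers $c,d$ are then fixed by Lemma~\ref{type-to-cover}; and $\sigma'$ is fixed by the $\Sk_n \dashv U_n$ adjunction. I do not anticipate any serious obstacle here since the substantive content — that full spheres correspond to parallel pairs of covers — has already been isolated in Proposition~\ref{full-types-as-covers}; what remains is bookkeeping on the inductive definitions of cells and compositions. The only mild subtlety is ensuring the depth comparison is carried out with the correct normalisation on cell- versus morphism-depth, but this is handled uniformly by the recursive formulas for $\cdpth$ and $\mdpth$ recorded in Section~\ref{sec:well-foundedness}.
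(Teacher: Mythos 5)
Your proof is correct and takes essentially the same route as the paper: the paper derives this proposition directly from the definition of the contraction together with the globe case of the proof of Proposition~\ref{generic-free}, where a coherence cell $\coh(B',A,\tau)$ is decomposed as (the extension of) $\tau$ precomposed with the cover corresponding to $\coh(B',A,\id)$ --- which is exactly your $l^{c,d}$, with the covers $c,d$ recovered from the full sphere $A$ via Lemma~\ref{type-to-cover}. Your case split on $\var$ versus $\coh$, the uniqueness via injectivity of the constructors and the skeleton adjunction, and the depth bookkeeping all match the paper's intended argument.
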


\begin{theorem}\label{theta-w-is-initial}
        The globular theory $\Theta_w$ with the above choice of contraction is the initial normalised homogeneous globular theory over $\Theta_s$ with contraction.
\end{theorem}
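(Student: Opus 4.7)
The plan is to construct, for every normalised homogeneous globular theory $\Theta_A$ over $\Theta_s$ equipped with a chosen contraction, a unique morphism $F : \Theta_w \to \Theta_A$ of such structures. Since both theories are bijective on objects, $F$ must be the identity on Batanin trees. Moreover, any morphism of globular theories preserves globular sums, so $\Theta_A(B, B')$ is the limit of $\Theta_A(D_k, B')$ over the positions of $B$; consequently $F$ is determined by its action on $n$-cells $\sigma : D_n \to B'$. Proposition~\ref{pure-factorisation} offers a dichotomy for such a $\sigma$: either it is an immersion in $\Theta_0$, where $F$ must coincide with the canonical inclusion $\Theta_0 \hookrightarrow \Theta_A$, or it factors uniquely as $\sigma = \sigma' \circ l^{c,d}$ with $\sigma'$ of strictly smaller $\mdpth$ and $c, d$ parallel covers, and then $F(\sigma) = F(\sigma') \circ l^{F(c),F(d)}$ is forced by contraction-preservation. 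An induction on an appropriate measure therefore pins $F$ down uniquely on all $n$-cells, and hence on all morphisms.

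For existence I would turn this recipe into a definition by well-founded recursion on an appropriate measure (for instance a lexicographic combination of the dimension of the ambient Batanin tree and $\mdpth$). Concretely, let $F$ be the identity on objects and the canonical inclusion on immersions; for every other $n$-cell $\sigma$ define $F(\sigma) = F(\sigma') \circ l^{F(c),F(d)}$ via the unique factorisation of Proposition~\ref{pure-factorisation}; finally extend $F$ to an arbitrary $\Theta_w$-morphism $B \to B'$ by assembling the $F$-images of its restrictions to the positions of $B$ through the limit description of $\Theta_A(B, B')$.

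The principal obstacle will be to prove that $F$ so defined is well-defined, functorial, and compatible with sources, targets, and covers; these properties should be handled simultaneously with the definition by induction. Source and target compatibility follows from the boundary conditions $l^{c,d} \circ s_n^{D_{n+1}} = s_n^B \circ c$ and $l^{c,d} \circ t_n^{D_{n+1}} = t_n^B \circ d$ that are built into the definition of a contraction. Preservation of covers is automatic because both immersions and contraction cells are covers. For functoriality, $F(\sigma \circ \tau) = F(\sigma) \circ F(\tau)$, I would factor $\sigma$ via Proposition~\ref{pure-factorisation} and analyse the behaviour of $l^{c,d} \circ \tau$: restricting $\tau$ to each position of its source and applying Proposition~\ref{pure-factorisation} again produces matching factorisations on both sides of the desired equation, so that the inductive hypothesis closes the argument.
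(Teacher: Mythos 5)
Your proposal is correct and follows essentially the same route as the paper's proof: identity on objects, the canonical inclusion on immersions, a recursive definition on covers $D_n \to B$ via the unique factorisation $\sigma = \sigma' \circ l^{c,d}$ from Proposition~\ref{pure-factorisation} with induction on dimension and $\mdpth$, extension to arbitrary trees through the globular-sum (limit) description of hom-sets, and a simultaneous induction establishing functoriality and compatibility with sources and targets. One small slip: immersions are not covers (a morphism that is both is an isomorphism), so cover preservation is not ``automatic'' for that reason; it instead follows because each contraction cell $l^{Fc,Fd}$ is a cover by definition and covers are closed under composition.
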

\begin{proof}
        Suppose that $i : \Theta_0 \to \Theta$ is a normalised homogeneous globular theory over $\Theta_s$ with contraction. We now define a functor $F : \Theta^w \to \Theta$. Since $\Free$ is injective on objects, we may define $F \Free B = i B$ for each object $B$ in $\Theta_w$. Similarly, since $\Free$ is faithful, for each immersion $\sigma : B \to B'$, we may define $F \Free \sigma = i \sigma$. Since every morphism in $\Theta_w$ factors into a cover followed by an immersion, it remains to define $F$ on covers.

        Suppose that $\sigma : B \to B'$ is a cover. If $\dim B = 0$, then we must have that $\sigma = \id_{D^0}$, since $\Theta_w$ is normalised. In this case $F \sigma = \id_{F D^0}$. Hence, suppose that $\dim B = n > 0$. We define $F \sigma$ by induction on $n$, and we simultaneously show that for each morphism $\tau : B' \to B''$, we have that $F \tau \circ F \sigma = F (\tau \circ \sigma)$. Suppose that we have defined $F$ on covers whose domain has dimension less than $n$. Then we define $F \sigma$ by induction on $\mdpth \sigma$. We simultaneously show that the assignment $F$ respects sources and targets:
        \[
                F \sigma  \circ I s_{n-1}^{B} = F (\sigma\circ \Free s_{n-1}^B).
        \]
        and similarly for targets.

        First suppose that $B = D^n$. When the morphism $\sigma$ is generator-preserving, we have already defined $F\sigma$. Sources and targets are trivially preserved in this case. Hence, suppose that $\sigma = \sigma' \circ l^{c, d}$ for some unique morphism $\sigma'$ with $\mdpth \sigma' < \mdpth \sigma$. Since $\sigma$ is a cover and $l^{c, d}$ is a cover, we have that $\sigma'$ is also a cover. Hence, we define
        \[
                F \sigma = F \sigma' \circ l^{Fc, Fd}.
        \]
        This is well defined by the uniqueness part of Proposition \ref{pure-factorisation}. Consider the following commutative diagram:
        \[
                \begin{tikzcd}[column sep=huge]
                        I D^{n-1}
                        \ar[r, "F c", two heads]
                        \ar[d, "I s^{D_n}_{n-1}" left, tail]
                        &
                        I \partial B''
                        \ar[dr, "F (\sigma' \circ \Free s^{\partial B''}_{n-1})", tail]
                        \ar[d, "I s^{B''}_{n-1}" left, tail]
                        \\
                        I D^n
                        \ar[r, "l^{F c,F d}" below, two heads]
                        &
                        I B''
                        \ar[r, "F \sigma'" below, tail]
                        &
                        I B'
                \end{tikzcd}
        \]
        The left hand square commutes by definition of $l^{Fc, Fd}$. The right hand triangle commutes by inductive hypothesis. The composite of the bottom row is $F \sigma$ by definition. Since, $n-1 < n$, the inductive hypothesis implies that the composite of the top row is $F(c \circ \sigma' \circ \Free s^{\partial B''}_{n-1})$. However,
        \[
                F(c \circ \sigma' \circ \Free s^{\partial B''}_{n-1})
                =
                F(c \circ \Free s^{B''}_{n-1} \circ \sigma')
                =
                F(I s^{D_n}_{n-1} \circ l^{c, d} \circ \sigma').
        \]
        Thus, $F$ respects sources. A similar argument shows that $F$ respects targets.

        Now suppose that $B$ is an arbitrary Batanin tree with $\dim B = n$. Suppose that $\sigma : \Free B \to \Free B'$ is a cover. For each $(k, x) \in \el(B)$, we have
        \[
                \mdpth (\gen_{ \sigma \circ \Free x}) = \mdpth (\sigma \circ \Free x) < \mdpth \sigma.
        \]
        The equality follows from the fact that $\fr_{\cov_{ \sigma \circ \Free x}}$ is generator-preserving. The inequality follows from the definition of $\mdpth$, and the Yoneda Lemma. Hence, by the representable case, we can assume that we have defined
        \[
                F (\sigma \circ \Free x)
                =
                F \inc_{\sigma \circ \Free x} \circ F \cov_{\sigma \circ \Free x}
        \]
        However, $B = \colim_{(k, x) \in \el(B)} D^k$ is a canonical globular sum of representables. It follows that $i B = \colim_{(k, x) \in \el(B)} i D^k$. Hence, since $F$ respects sources and targets, we may define $F \sigma : i B \to i B'$ to be the unique arrow in $\Theta$ such that, for each $(k, x) \in \el(B)$, we have that $F \sigma \circ i x = F (\sigma \circ \Free x)$.

        Now suppose that $\tau : B' \to B''$. A straightforward induction on $\mdpth \sigma$ now implies that $F \tau \circ F \sigma = F(\tau \circ \sigma)$. This completes the inductive definition of $F$.


        In order to show that $F$ is functorial, suppose that $f : A \to B$ and $g : B \to C$ are arrows in $\Theta_w$. If both $f$ and $g$ are covers, or both $f$ and $g$ are immersions, or if $f$ is a cover and $g$ is an immersion, then functoriality follows from the definition of $F$. By homogeneity it now suffices to check the case where $f$ is an immersion and $g$ is a cover. Note that, by definition, we have that $F (g \circ f) = F \inc_{g \circ f} \circ F \cov_{g \circ f}$. Now suppose that $x : ID^n \to A$ is an $n$-cell of $A$. Let $j$ and $d$ be an immersion and a cover respectively such that $\cov_{g \circ f} \circ x = j d$. Consider the following commutative diagram:

        \begin{equation*}
                \begin{tikzcd}
                        I D^n
                        \ar[r, "d", two heads]
                        \ar[d, "x"', tail]
                        &
                        \bullet
                        \ar[d, "j", tail]
                        \\
                        A
                        \ar[r, two heads, "\cov_{g \circ f}"]
                        \ar[d, "f"', tail]
                        &
                        \bullet
                        \ar[d, tail, "\inc_{g \circ f}"]
                        &
                        \\
                        B
                        \ar[r, "g"', two heads]
                        &
                        C
                \end{tikzcd}
        \end{equation*}
        Since $g$ is a cover, we have that $Fg \circ F(f \circ x) = F(\inc_{g \circ f} \circ j) \circ Fd$. Hence, since $F$ is functorial on immersions, we have that
        \[
                Fg \circ Ff \circ Fx = F \inc_{g \circ f} \circ Fj \circ Fd.
        \]
        On the other hand, since $\cov_{g \circ f}$ is a cover, we have that $F \cov_{g \circ f} \circ Fx = Fj \circ F d$. Precomposing with $F \inc_{g \circ f}$, we obtain
        \begin{align*}
                F(g \circ f) \circ Fx
                = F \inc_{g \circ f} \circ F \cov_{g \circ f} \circ Fx
                = F \inc_{g \circ f} \circ Fj \circ Fd.
        \end{align*}
        Hence, for all cells $x : ID^n \to A$, we have that
        \[
                F(g \circ f) \circ Fx = Fg \circ Ff \circ Fx.
        \]
        Since maps of the form $Fx : ID^n \to FA$ assemble into a colimit cone over $A$, it follows that $F(g \circ f) = Fg \circ Ff$.

        By construction, the functor $F$ is over $\Theta_s$ and preserves immersions, covers and globular sums of immersions. Conversely, any functor over $\Theta_s$ preserving these data must satisfy all the properties defining $F$. Thus, $\Theta^w$ is initial.
\end{proof}

In conclusion, we establish the desired comparison result.

\begin{corollary}
        The monad $\fc^w$ is the one induced by Leinster's operad. In particular, our notion of $\omega$-category coincides with that of Leinster \cite{leinster_higher_2003}.
\end{corollary}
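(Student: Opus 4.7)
The plan is to deduce the corollary from Theorem \ref{theta-w-is-initial} by transporting the initiality statement across Ara's equivalence between normalised homogeneous globular theories over $\Theta_s$ and normalised globular operads in the sense of Batanin (see Remark \ref{homogeneous-globular-theories-are-globular-operads}, with the relevant references to \cite{ara_sur_2010,berger_cellular_2002,berger_weber_2012}). First I would verify that this equivalence is compatible with the additional structure of contractions: a contraction on a globular theory, as defined above, corresponds bijectively to a contraction on the associated globular operad in the sense of Leinster \cite{leinster_higher_2003}, since both amount to a choice of lift of each parallel pair of covers $c,d:D_n\to \partial_n B$ against the source and target inclusions of $B$, and the equivalence sends such covers to operations of the operad with matching source/target data.

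Next, I would invoke this compatibility to conclude that the category of normalised homogeneous globular theories over $\Theta_s$ equipped with a contraction is equivalent to the category of normalised contractible globular operads. Since equivalences of categories preserve initial objects, Theorem \ref{theta-w-is-initial} yields that the globular operad $O_w$ corresponding to $\Theta_w$ (together with its contraction) is the initial normalised contractible globular operad — i.e., exactly Leinster's operad $L$.

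Now I would use the fact that for a normalised homogeneous globular theory $\Theta_A$ over $\Theta_s$, the category of models of $\Theta_A$ is canonically equivalent to the category of algebras for the corresponding globular operad $O_A$, and that in both cases this category is the category of algebras of a monad on $\Glob$ induced by the theory (respectively the operad). Specialising to $\Theta_w$: by construction, $\Theta_w$ is the globular theory associated to the monad $\fc^w$, so its models recover $\fc^w$-algebras; on the other hand, under the equivalence its models are precisely algebras for Leinster's operad $L$, i.e.\ Leinster weak $\omega$-categories. Comparing the two descriptions of the same monad (via representability of $\fc^w_n$ by the $n$-globe and the generic-free factorisation of Proposition \ref{generic-free}, which ensures the monad is fully recovered from $\Theta_w$) gives an isomorphism $\fc^w \cong T_L$, where $T_L$ is the monad induced by Leinster's operad.

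The main obstacle will be checking compatibility of the contraction structures under the theory/operad equivalence in a clean way — in particular, matching our normalisation condition (a unique cover $D_0 \to D_0$) and our definition of $l^{c,d}$ with the corresponding data in Leinster's operadic setup, so that initiality really does transport. Once this bookkeeping is done, the isomorphism of monads and the identification of $\omega\Cat$ with the category of Leinster weak $\omega$-categories follow formally.
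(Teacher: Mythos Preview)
Your proposal is correct and follows essentially the same route as the paper: invoke Theorem~\ref{theta-w-is-initial}, transport initiality across Ara's equivalence between homogeneous globular theories over $\Theta_s$ and globular operads (Remark~\ref{homogeneous-globular-theories-are-globular-operads}), and conclude that the induced monads coincide. The paper's argument is terser---it simply observes that Theorem~\ref{theta-w-is-initial} gives $\Theta_w \cong \Theta_T$ for $T$ the monad of Leinster's operad, then cites Ara's equivalence to identify the monads---and leaves the contraction-compatibility bookkeeping implicit, whereas you flag it explicitly as the step requiring care; but the substance is the same.
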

\begin{proof}
        Let $T$ be the monad on globular sets induced by Leinster's operad. By Theorem \ref{theta-w-is-initial}, we have that $\Theta_{\fc^w} = \Theta_w \cong \Theta_T$ as globular theories.  Therefore, by the equivalence described in \cite[6.6.8]{ara_sur_2010} between homogeneous globular theories and globular operads, viewed as cartesian monads over the strict $\omega$\-category monad $\fc^s$, the monad $\fc^w$ must be isomorphic to $T$. Since Leinster's $\omega$-categories are $T$-algebras, the two notions of $\omega$-category coincide.
\end{proof}

\bibliographystyle{plainurl}
\bibliography{Bibliography}

\end{document}